\def\sqbullet{\raise.2ex\hbox{\vrule width 3.5pt height 3.5pt}}
\def\sqbullet{\raise.2ex\hbox{\vrule width 3.5pt height 3.5pt}}
\newcounter{substep}
\def\thesubstep{\arabic{substep}}
\newenvironment{substeps}[1]{%
\refstepcounter{substep}\noindent{(\ref{#1}.\thesubstep)\ }\ }%
{\em}
\newcounter{subsubstep}
\def\thesubsubstep{\arabic{subsubstep}}
\numberwithin{figure}{section}
\newtheorem{mthm}{Theorem}
\newtheorem{thm}{Theorem}[section]
\newtheorem{prop}[thm]{Proposition}
\newtheorem{cor}[thm]{Corollary}
\newtheorem{lem}[thm]{Lemma}
\theoremstyle{definition}
\newtheorem{define}[thm]{Definition}
\newtheorem{defines}[thm]{Definitions}
\theoremstyle{remark}
\newtheorem{remark}[thm]{Remark}
\newtheorem{remarks}[thm]{Remarks}
 \newcommand{\N}{{\mathbb N}}
 \newcommand{\R}{{\mathbb R}}
\newcommand{\sph}{{\mathbb S}}
\newcommand{\pol}{{\EuScript K}}
\newcommand{\p}{{\EuScript P}}
\newcommand{\Ss}{{\EuScript S}}
\newcommand{\Nn}{{\EuScript N}}
\newcommand{\Qq}{{\EuScript Q}}
\newcommand\Rr{{\EuScript R}}
\newcommand{\Bb}{{\EuScript B}}
\newcommand{\Mm}{{\EuScript M}}
\newcommand{\Tt}{{\EuScript T}}
\newcommand{\Hh}{{\EuScript H}}
\newcommand{\Ii}{{\EuScript I}}
\newcommand{\Ff}{{\EuScript F}}
\newcommand{\Cc}{{\EuScript C}}
\newcommand{\Ee}{{\EuScript E}}
\newcommand{\tildebaja}{{\raise.17ex\hbox{$\scriptstyle\sim$}}}
\newcommand{\Int}{\operatorname{Int}}
\newcommand{\pp}{\operatorname{p}}
\newcommand{\rr}{\operatorname{r}}
\newcommand{\TF}{\operatorname{\mathsf F}}
\newcommand{\TS}{\operatorname{\mathsf G}}
\newcommand{\id}{\operatorname{id}}
\newcommand{\x}{{\tt x}} 
 \renewcommand{\t}{{\tt t}}
\newcommand{\ol}{\overline}
\newcommand{\veps}{\varepsilon}
\newcommand{\bs}{{\mathcal B}}
\title[On complements of convex polyhedra as polynomial and regular images]{On complements of convex polyhedra\\ as polynomial and regular images of $\R^n$}
\date{}
\author{Jos\'e F. Fernando}
\address{Departamento de \'Algebra, Facultad de Ciencias Matem\'aticas, Universidad Complutense de Madrid, 28040 MADRID (SPAIN)}
\curraddr{}
\email{josefer@mat.ucm.es}\thanks{The first author is supported by Spanish GR MTM2011-22435, while the second is a external collaborator of this project. This article is based on a part of the doctoral dissertation of the second author, written under the supervision of the first. 
}
\author{Carlos Ueno}
\address{Departamento de Matem\'aticas, IES La Vega de San Jos\'e, Paseo de San Jos\'e, s/n, Las Palmas de Gran Canaria, 35015 LAS PALMAS (SPAIN).
}
\email{carlos.ueno@terra.es}
\begin{document}

\begin{abstract}
In this work we prove constructively that the complement $\R^n\setminus\pol$ of a convex polyhedron $\pol\subset\R^n$ and the complement $\R^n\setminus\Int(\pol)$ of its interior are regular images of $\R^n$. If $\pol$ is moreover bounded, we can assure that $\R^n\setminus\pol$ and $\R^n\setminus\Int(\pol)$ are also polynomial images of $\R^n$. The construction of such regular and polynomial maps is done by double induction on the number of \em facets \em (faces of maximal dimension) and the dimension of $\pol$; the careful placing (\em first \em and \em second trimming positions\em) of the involved convex polyhedra which appear in each inductive step has interest by its own and it is the crucial part of our technique.
\end{abstract}

\date{31/08/2012}

\subjclass[2000]{Primary: 14P10, 52B11; Secondary: 14Q99, 90C26, 52B55}
\keywords{Polynomial and regular maps and images, convex polyhedra, first and second trimming positions, trimming maps, optimization, Positivstellens\"atze}
\maketitle

\section*{Introduction}\label{s1}

A map $f:=(f_1,\ldots,f_m):\R^n\to\R^m$ is a \em polynomial map \em if its components $f_k\in\R[\x]:=\R[\x_1,\ldots,\x_n]$ are polynomials. Analogously, $f$ is a \em regular map \em if its components can be represented as quotients $f_k=g_k/h_k$ of two polynomials $g_k,h_k\in\R[\x]$ such that $h_k$ never vanishes on $\R^n$. During the last decade we have approached the problem of determining which (semialgebraic) subsets $\Ss\subset\R^m$ are polynomial or regular images of $\R^n$ (see \cite{g} for the first proposal of studying this problem and some particular related ones like the ``quadrant problem''). By Tarski-Seidenberg's principle (see \cite[1.4]{bcr}) the image of an either polynomial or regular map is a semialgebraic set. As it is well-known a subset $\Ss\subset\R^n$ is \em semialgebraic \em when it has a description by a finite boolean combination of polynomial equations and inequalities, which we will call a \em semialgebraic \em description.

We feel very far from solving the problem stated above in its full generality, but we have developed significant progresses in two ways:

\noindent{\em General conditions.} By obtaining general conditions that must satisfy a semialgebraic subset $\Ss\subset\R^m$ which is either a polynomial or a regular image of $\R^n$ (see \cite{fg2,fu,u1} for further details). The most remarkable one states that the set of infinite points of a polynomial image of $\R^n$ is connected.

\noindent{\em Ample families.} By showing that certain ample families of significant semialgebraic sets are either polynomial or regular images of $\R^n$ (see \cite{f1,fg1,fgu1,u2} for further details). We will comment later some of our main results.

The results of this work go in the direction of the second approach. Since the $1$-dimensional case has been completely described by the first author in \cite{f1}, we will only take care of semialgebraic sets $\Ss\subset\R^m$ of dimension $\geq2$.

A very distinguished family of semialgebraic sets is the one consisting of those semialgebraic sets whose boundary is piecewise linear, that is, semialgebraic sets which admit a semialgebraic description involving just polynomials of degree one. Of course, many of them are  neither polynomial nor regular images of $\R^n$, as one easily realizes reviewing the known general conditions that must satisfy a semialgebraic set to be either a polynomial or a regular image of $\R^n$; however, it seems natural to wonder what happens with convex polyhedra, their interiors (as topological manifolds with boundary), their complements and the complements of their interiors.

We proved in \cite{fgu1} that all $n$-dimensional convex polyhedra of dimension $\geq2$ and their interiors are regular images of $\R^n$. Since many convex polyhedra are bounded and the images of nonconstant polynomial maps are unbounded, the suitable general approach in that case was to consider regular maps. Concerning the representation of unbounded convex polygons $\p\subset\R^2$ as polynomial images of $\R^2$, see \cite{u3}. 

The complements of proper convex polyhedra (or of their interiors) are unbounded and so the representation problem in these cases has a meaning either in the polynomial or in the regular case. Our main result is the following:

\begin{mthm}\label{mainext}
Let $\pol\subset\R^n$ be a convex polyhedron such that $\pol$ is not a layer nor a hyperplane. Then  
\begin{itemize}
\item[(i)] If $\pol$ is bounded, $\R^n\setminus\pol$ and $\R^n\setminus\Int(\pol)$ are polynomial images of $\R^n$. 
\item[(ii)] If $\pol$ is unbounded, $\R^n\setminus\pol$ and $\R^n\setminus\Int(\pol)$ are regular images of $\R^n$. 
\end{itemize}
\end{mthm}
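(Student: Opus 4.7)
The plan is a double induction on the pair $(d,f)$, ordered lexicographically, where $d=\dim\pol$ and $f$ is the number of facets of $\pol$. The one-dimensional case is settled in \cite{f1} and serves as foundation, as does the representation of unbounded convex polygons in \cite{u3}. The hypothesis that $\pol$ is neither a hyperplane nor a layer excludes precisely those polyhedra for which $\R^n\setminus\pol$ fails the general obstructions to being a polynomial image (notably the connectedness at infinity condition recalled in the introduction from \cite{fg2,fu,u1}).

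For the inductive step, I would choose a facet $F$ of $\pol$ and apply an affine change of coordinates so that $F\subset\{\x_n=0\}$ and $\pol\subset\{\x_n\geq 0\}$, placing $\pol$ in what the authors call \emph{first trimming position}. I would then introduce an auxiliary convex polyhedron $\pol'$ of strictly smaller complexity in the lexicographic order --- either with fewer facets or of smaller dimension --- obtained by modifying $\pol$ along $F$. The heart of the argument is the explicit construction of a \emph{trimming map} $\phi\colon\R^n\to\R^n$, polynomial when $\pol$ is bounded and regular otherwise, with the property
\[
\phi(\R^n\setminus\pol')=\R^n\setminus\pol.
\]
Composing $\phi$ with the inductively supplied surjection $\R^n\to\R^n\setminus\pol'$ yields the desired map. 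The variant $\R^n\setminus\Int(\pol)$ should be treated by a parallel construction with slightly perturbed weights, arranged so that the relevant boundary points of $\pol$ are swept into the image. Whenever the first trimming position fails to produce a genuine reduction --- typically because the recession cone of an unbounded $\pol$ forces $\phi$ to meet $\pol$ --- I would switch to a \emph{second trimming position} realigning a different facet or unbounded direction with the coordinate hyperplanes.

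The difference between parts (i) and (ii) reflects a structural necessity. For bounded $\pol$, the region of $\R^n\setminus\pol'$ that must be pushed out of $\pol$ is itself bounded, so the trimming map can be engineered with polynomial components, matching the expectation that polynomial images of $\R^n$ are unbounded but $\pol$ itself is compact. For unbounded $\pol$, the recession cone prevents any polynomial map from avoiding $\pol$ entirely in the relevant directions, and one is forced to allow a nowhere-vanishing denominator to perform a bounded correction at infinity; this is exactly the extra flexibility that the class of regular maps supplies, and it explains why only a regular (not a polynomial) conclusion can be asked for in (ii).

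The main obstacle, as flagged in the abstract, is the explicit design of the trimming maps and the verification that their images are \emph{exactly} $\R^n\setminus\pol$ (respectively $\R^n\setminus\Int(\pol)$), neither missing points of the complement nor accidentally including points of $\pol$. The behaviour of $\phi$ near the relative boundary of $F$ --- where several facets of $\pol$ meet $F$ --- is the most delicate issue and is what motivates the dichotomy between the first and the second trimming position. Distinguishing $\R^n\setminus\pol$ from $\R^n\setminus\Int(\pol)$ adds a further technical layer, since one must control precisely which points of $\partial\pol$ are hit by $\phi$, which in practice amounts to choosing the numerator polynomials in $\phi$ with the correct vanishing behaviour along $F$ and its subfaces.
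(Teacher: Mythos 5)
Your outline reproduces the architecture of the paper's proof---double induction on the dimension and the number of facets, a change of coordinates placing a chosen facet in a coordinate hyperplane, a trimming map carrying the complement of an auxiliary polyhedron with one facet fewer onto the complement of $\pol$, and composition with the inductively supplied surjection---but everything you defer is precisely where the proof lives, so as it stands there is a genuine gap rather than a proof. Nothing in your sketch constructs the trimming maps: one has to choose a polynomial $P$ vanishing on the facets nonparallel to the fibre direction and a strictly positive denominator, prove derivative estimates guaranteeing that the resulting fibre-preserving map is increasing on the relevant part of each line $\pi_n^{-1}(a,0)$, and then compute its image fibre by fibre. Moreover, \emph{first trimming position} is not merely ``$\Ff$ in a coordinate hyperplane and $\pol$ on one side of it'': the essential requirement is that every fibre $\pi_n^{-1}(a,0)\cap\pol$ be bounded; achieving this is a nontrivial placement lemma (requiring nondegeneracy), and one must also verify that the position survives deletion of the facet, since the trimming lemma is applied to the enlarged polyhedron $\pol_{1,\times}$, not to $\pol$ itself. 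Finally, your facet induction has no base case: deleting facets from a nondegenerate polyhedron stops at one with exactly $n$ facets (one vertex), which needs its own explicit treatment, and the degenerate case $\pol=\p\times\R^{n-k}$ is where the dimension induction actually enters---not a ``modification of $\pol$ along $F$''.

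Two structural points in your plan are also wrong. First, as organized, the induction does not deliver part (i): deleting a facet of a bounded $\pol$ in general produces an unbounded auxiliary polyhedron, so your inductive hypothesis only provides a \emph{regular} surjection onto its complement and the composite is merely regular. The paper gets polynomial maps for bounded $\pol$ by running the process in the opposite direction: it encloses $\pol$ in a simplex and intersects with the half-spaces of the minimal presentation one at a time, so every intermediate polyhedron is bounded, hence automatically in \emph{strong} trimming position, which is what permits a constant denominator and a polynomial trimming map. Second, your explanation of the bounded/unbounded dichotomy is incorrect: it is false that the recession cone of an unbounded $\pol$ prevents a polynomial parametrization of $\R^n\setminus\pol$ (for $n=2,3$ these complements are polynomial images, see \cite{u2,fu2}); regular maps appear only because for $n\geq4$ one can in general guarantee only the \emph{weak} trimming position, which forces a nonconstant strictly positive denominator. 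Likewise, the second trimming position is not a fallback for recession-cone failures of the first: it is the specific device needed for $\R^n\setminus\Int(\pol)$, where the type-one trimming step removes a piece $\Lambda$ of the open facet that belongs to the target set, and a second, transversal trimming is required to sweep it back in; ``slightly perturbed weights'' will not accomplish this. (Also, polyhedra of dimension $<n$ are not handled by the main induction at all, but by a separate and easier argument for complements of basic semialgebraic subsets of a hyperplane, where both complements are polynomial images even in the unbounded case; and the relevant base case of the induction is the planar one from \cite{u2}, not \cite{f1} or \cite{u3}.)
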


Here, $\Int(\pol)$ refers to the interior of $\pol$ as a topological manifold with boundary, and by a \em layer \em we understand a subset of $\R^n$ affinely equivalent to $[-a,a]\times\R^{n-1}$ with $a>0$. It is important to point out that the challenging situations arise when the dimension of $\pol\subset\R^n$ equals $n$. As we will see in Proposition \ref{semial} and Remark \ref{semialr}, for convex polyhedra $\pol\subset\R^n$ of dimension $d<n$ which are not hyperplanes it is always true (and much easier to prove) that both $\R^n\setminus\pol$ and $\R^n\setminus\Int(\pol)$ are polynomial images of $\R^n$. 

It is clear that layers and hyperplanes disconnect $\R^n$. Actually, from Theorem~\ref{mainext} follows as a byproduct that these are the only convex polyhedra that disconnect $\R^n$ (fact that can be proved by more elementary means), for a polynomial or  regular image of $\R^n$ must be necessarily connected.

Moreover, in \cite{fu2} we prove that the techniques developed here still work, by carefully handling our convex polyhedra, to prove that $\R^3\setminus\pol$ and $\R^3\setminus\Int(\pol)$ are polynomial images of $\R^3$ if $\pol\subset\R^3$ is a $3$-dimensional convex polyhedron; even more, we prove there that for $n\geq4$ our constructions do not go further to represent either $\R^n\setminus\pol$ or $\R^n\setminus\Int(\pol)$ as polynomial images of $\R^n$, for a general convex polyhedra $\pol\subset\R^n$. 

To easy the presentation of the full picture of what is known concerning piecewise linear boundary sets (including the results of this work stated in Theorem \ref{mainext}) we introduce the following two invariants. Given a semialgebraic set $\Ss\subset\R^m$ we denote 
\begin{equation*}
\begin{split}
\pp(\Ss):&=\inf\{n\geq1:\exists \ f:\R^n\to\R^m\, \, \, \text{polynomial such that}\, \, \, f(\R^n)=\Ss\},\\
\rr(\Ss):&=\inf\{n\geq1:\exists \ f:\R^n\to\R^m\, \, \, \text{regular such that}\, \, \, f(\R^n)=\Ss\}.
\end{split}
\end{equation*}
The condition $\pp(\Ss):=+\infty$ characterizes the nonrepresentability of $\Ss$ as a polynomial image of some $\R^n$, while $\rr(\Ss):=+\infty$ has the analogous meaning for regular maps. Let $\pol\subset\R^n$ be an $n$-dimensional convex polyhedron and assume for the second part of the table below that $\Ss:=\R^n\setminus\pol$ and $\ol{\Ss}=\R^n\setminus\Int(\pol)$ are connected:

\begin{center}
{\setlength{\arrayrulewidth}{.5pt}
\renewcommand*{\arraystretch}{1.2}
\begin{tabular}{|c|c|c|c|c|c|c|c|c|c|}
\hline
&\multicolumn{2}{c|}{$\pol$ bounded}&\multicolumn{4}{c|}{$\pol$ unbounded}\\
\cline{2-7} 
&$n=1$&$n\geq2$&$n=1$&$n=2$&$n=3$&$n\geq4$\\
\hline
${\rm r}(\pol)$&\multicolumn{6}{c|}{$n$}\\
\hline
${\rm r}(\Int(\pol))$&\multirow{3}{*}{$+\infty$}&$n$&2&\multicolumn{3}{c|}{$n$}\\
\cline{1-1}\cline{3-7}
${\rm p}(\pol)$&&\multirow{2}{*}{$+\infty$}&1&$2,+\infty^*$&\multicolumn{2}{c|}{unknown$_1$, $+\infty^*$}\\
\cline{1-1}\cline{4-7}
${\rm p}(\Int(\pol))$&&&2&$+\infty,\,2^*$&\multicolumn{2}{c|}{unknown$_2$, $+\infty^*$}\\
\hline
${\rm r}(\Ss)$&\multirow{4}{*}{$+\infty$}&\multirow{4}{*}{$n$}&2&\multicolumn{3}{c|}{$n$}\\
\cline{1-1}\cline{4-7}
${\rm r}(\ol{\Ss})$&&&\multicolumn{4}{c|}{$n$}\\
\cline{1-1}\cline{4-7}
${\rm p}(\Ss)$&&&2&\multicolumn{2}{c|}{$n$}&\multirow{2}{*}{unknown}\\
\cline{1-1}\cline{4-6} 
${\rm p}(\ol{\Ss})$&&&\multicolumn{3}{c|}{$n$}&\\
\hline
\end{tabular}
}
\end{center}

\noindent Let us explain some (marked) exceptions in the previous chart:
\begin{itemize}
\item[$\sqbullet$] $(2,\,+\infty^*)$: an unbounded convex polygon $\pol\subset\R^2$ has ${\rm p}(\pol)=+\infty$ if and only if all the unbounded edges of $\pol$ are parallel  (see \cite{u3}); otherwise ${\rm p}(\pol)=2$.
\item[$\sqbullet$] $(+\infty,\,2^*)$: if the number of edges of an unbounded convex polygon $\pol$ is $\geq3$, then ${\rm p}(\Int(\pol))=+\infty$; otherwise, ${\rm p}(\Int(\pol))=2$ (see \cite{fg2,u2}).
\item[$\sqbullet$] (unknown$_1$, $+\infty^*$): if the linear projection of $\pol$ in some direction is bounded, then ${\rm p}(\Int(\pol))=+\infty$; otherwise, we have no further information.
\item[$\sqbullet$] (unknown$_2$, $+\infty^*$): if $\pol$ has a bounded facet or its linear projection in some direction is bounded, then ${\rm p}(\Int(\pol))=+\infty$; otherwise, we have no further information.
\end{itemize}

The interest of deciding whether a semialgebraic set is a polynomial or a regular image of $\R^n$ is out of any doubt, and it lies in the fact that the study of certain classical problems in Real Geometry concerning this kind of sets is reduced to the analysis of those problems on $\R^n$, for which no contour conditions appear and there are many more tools and more powerful (for further details see \cite[\S1]{fg1} and \cite[\S1]{fg2}). Let us discuss here two of them. 

\noindent{\em Optimization}. Suppose that $f:\R^n\to\R^n$ is either a polynomial or a regular map and let $\Ss=f(\R^n)$. Then the optimization of a given regular function $g:\Ss\to\R$ is equivalent to the optimization of the composition $g\circ f$ on $\R^n$, and in this way one can avoid contour conditions (see for instance \cite{nds,ps,sch} for relevant tools concerning optimization of polynomial functions on $\R^n$). 

\noindent{\em Positivstellens\"atze}. Another classical problem is the algebraic characterization of those regular functions $g:\R^n\to\R$ which are either strictly positive or positive semidefinite on $\Ss$. In case $\Ss$ is a basic closed semialgebraic set these problems were solved in \cite{s}, see also \cite[4.4.3]{bcr}. Note that $g$ is strictly positive (resp. positive semidefinite) on $\Ss$ if and only if $g\circ f$ is strictly positive (resp. positive semidefinite) on $\R^n$, and both questions are decidable using for instance \cite{s}. Thus, this provides an algebraic characterization of positiveness for polynomial and regular functions on semialgebraic sets which are either polynomial or regular images of $\R^n$. Observe that these semialgebraic sets need not to be either closed, as it happens with the interior of a convex polyhedron, or basic, as it happens with the complement of a convex polyhedron. Thus, this provides a large class of semialgebraic sets (neither closed nor basic), which are not considered by the classical Positivstellens\"atze, for which there is a certificate of positiveness for polynomial and regular functions.

The article is organized as follows. In Section \ref{s2} we provide some general terminology and results concerning convex polyhedra. In Section \ref{s3} we check Theorem~\ref{mainext} for those convex polyhedra in $\R^n$ whose dimension is strictly smaller than $n$; in fact, we prove a more general result which works for the complement of any basic semialgebraic set contained in a hyperplane. In Sections \ref{s4} and \ref{s6} we develop the trimming tools that will be crucial to prove Theorem \ref{mainext} in Sections \ref{s5} and \ref{s7}. Finally, in Section \ref{s8} we prove, as a byproduct of Theorem \ref{mainext}, that the complement of an open ball is a polynomial image of $\R^n$ while the complement of the closed ball is a polynomial image of $\R^{n+1}$ but not of $\R^n$.

\section{Preliminaries on convex polyhedra}\label{s2}

We begin by introducing some preliminary terminology and notations concerning convex polyhedra. For a detailed study of the main properties of convex sets, we refer the reader to \cite{ber1,ber2,r}. Along this work, we write $\R[\x]$ to denote the ring of polynomials $\R[\x_1,\dots,\x_n]$. Given an affine hyperplane $H\subset\R^n$ there exists a degree one polynomial $h\in\R[\x]$ such that $H:=\{x\in\R^n:\ h(x)=0\}\equiv\{h=0\}$. This polynomial $h$ determines two \emph{closed half-spaces} $H^+$ and $H^-$, defined by
$$
H^+:=\{x\in\R^n:\ h(x)\geq 0\}\equiv\{h\geq0\}\quad\text{and}\quad H^-:=\{x\in\R^n:\ h(x)\leq 0\}\equiv\{h\leq0\}.
$$
In this way, the labels $H^+$ and $H^-$ are assigned in terms of the choice of $h$ (or equivalently, in terms of the orientation of $H$) and it is enough to substitute $h$ by $-h$ to interchange the roles of $H^+$ and $H^-$. As usual, we denote by $\vec{h}$ the homogeneous component of $h$ of degree one and by $\vec{H}:=\{\vec{v}\in\R^n:\ \vec{h}(\vec{v})=0\}$ the \em direction of $H$\em. More generally, we also denote by $\vec{W}$ the direction of any affine subspace $W\subset\R^n$.

\subsection{Generalities on convex polyhedra}\label{pldrfc} 
A subset $\pol\subset\R^n$ is a \emph{convex polyhedron} if it can be described as the finite intersection $\pol:=\bigcap_{i=1}^rH_i^+$ of closed half-spaces $H_i^+$; we allow this family of half-spaces to by empty to describe $\pol=\R^n$ as a convex polyhedron. The dimension $\dim(\pol)$ of $\pol$ is its dimension as a topological manifold with boundary, which coincides with its dimension as a semialgebraic subset of $\R^n$ and with the dimension of the smallest affine subspace of $\R^n$ which contains $\pol$. 

Let $\pol\subset\R^n$ be an $n$-dimensional convex polyhedron. By \cite[12.1.5]{ber2} there exists a unique minimal family ${\mathfrak H}:=\{H_1,\ldots,H_m\}$ of affine hyperplanes of $\R^n$, which is empty just in case $\pol=\R^n$, such that $\pol=\bigcap_{i=1}^mH_i^+$; we refer to this family as the \em minimal presentation \em of $\pol$. Of course, we will always assume that we have chosen the linear equation $h_i$ of each $H_i$ such that $\pol\subset H_i^+$. 

The \em facets \em or $(n-1)$-\em faces \em of $\pol$ are the intersections $\Ff_i:=H_i\cap\pol$ for $1\leq i\leq m$; just the convex polyhedron $\R^n$ has no facets. Each facet $\Ff_i:=H_i^-\cap\bigcap_{j=1}^mH_j^+$ is a polyhedron contained in $H_i$. For inductive processes, we will denote $\pol_{i,\times}:=\bigcap_{j\neq i}H_j^+$, which is a convex polyhedron that strictly contains $\pol$ and satisfies $\pol=\pol_{i,\times}\cap H_i^+$. 

A convex polyhedron $\pol\subset\R^n$ is a topological manifold, whose interior $\Int(\pol)$ is a topological manifold without boundary, which coincides with the topological interior of $\pol$ in $\R^n$ if and only if $\dim(\pol)=n$. The \em boundary \em of $\pol$ is $\partial\pol=\pol\setminus\Int(\pol)$; in particular, if $\pol$ is a singleton, $\Int(\pol)=\pol$ and $\partial\pol=\varnothing$. By \cite[12.1.5]{ber2}, $\partial\pol=\bigcup_{i=1}^m\Ff_i$, and so
$$
\Int(\pol)=\pol\setminus\bigcup_{i=1}^m(\pol\cap H_i)=\bigcap_{j=1}^mH_j^+\cap\bigcap_{i=1}^m(\R^n\setminus H_i)=\bigcap_{i=1}^m(H_i^+\setminus H_i).
$$ 

For $0\leq j\leq n-2$, we define inductively the $j$-\em faces \em of $\pol$ as the facets of the $(j+1)$-faces of $\pol$, which are again convex polyhedra. As usual, the $0$-faces are the \em vertices \em of $\pol$ and the $1$-faces are the \em edges \em of $\pol$; obviously if $\pol$ has a vertex, then $m\geq n$ (see \cite[12.1.8-9]{ber2}). In general, we denote by $\Ee$ a generic face of $\pol$ to distinguish it from the facets $\Ff_1,\ldots,\Ff_m$ of $\pol$; the affine subspace generated by $\Ee$ is denoted by $W$ to be distinguished from the affine hyperplanes $H_1,\ldots,H_m$ generated by the facets $\Ff_1,\ldots,\Ff_m$ of $\pol$.

In what follows, given two points $p,q\in\R^n$ we denote by $\ol{pq}:=\{\lambda p+(1-\lambda)q:\ 0\leq\lambda\leq1\}$ the \em (closed) segment connecting $p$ and $q$\em. On the other hand, given a point $p\in\R^n$ and a vector $\vec{v}\in\R^n$, we denote by $p\vec{v}:=\{p+\lambda\vec{v}: \lambda\geq0\}$ the \em (closed) half-line of extreme $p$ and direction $\vec{v}$\em. The boundary of the segment $\Ss:=\ol{pq}$ is $\partial\Ss=\{p,q\}$ and its interior is $\Int(\Ss)=\Ss\setminus\{p,q\}$, while the boundary of the half-line $\Hh:=p\vec{v}$ is $\partial\Hh=\{p\}$ and its interior is $\Int(\Hh)=\Hh\setminus\{p\}$.

The \em affine maps \em are those maps that preserve affine combinations; hence, affine maps are polynomial maps that preserve convexity and so they transform convex polyhedra onto convex polyhedra. As usual, the linear map (or derivative) associated to an affine map $f:\R^n\to\R^m$ is denoted by $\vec{f}:\R^n\to\R^m$. It will be common to use affine bijections $f:\R^n\to\R^n$, that will be called \em changes of coordinates\em, to place a convex polyhedron, or more generally a semialgebraic set $\Ss$, in convenient positions. As usual we say that $\Ss$ and $f(\Ss)$ are \em affinely equivalent\em.

A convex polyhedron of $\R^n$ is \em nondegenerate \em if it has at least one vertex; otherwise, the convex polyhedron is \em degenerate\em. Next, we recall a natural characterization of degenerate convex polyhedra (see \cite[\S2.2]{fgu1}). 

\begin{lem}\label{lem:deg2} 
Let $\pol\subset\R^n$ be an $n$-dimensional convex polyhedron. The following assertions are equivalent:
\begin{itemize}
\item[(i)] $\pol$ is degenerate.
\item[(ii)] $\pol=\R^n$ or there exists a nondegenerate convex polyhedron $\p\subset\R^k$, where $1\leq k\leq n-1$, such that after a change of coordinates $\pol=\p\times\R^{n-k}$.
\item[(iii)] $\pol$ contains a line $\ell$.
\end{itemize}
\end{lem}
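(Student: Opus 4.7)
My plan is to hinge the equivalence on the linear subspace $L:=\bigcap_{i=1}^{m}\vec{H}_{i}$ (with the convention $L=\R^{n}$ if $m=0$) attached to the minimal presentation $\pol=\bigcap_{i=1}^{m}H_{i}^{+}$, and to reduce (i)--(iii) to the single key claim that \emph{$\pol$ has a vertex if and only if $L=\{0\}$}. Granting this claim, (ii)$\Rightarrow$(iii) is immediate, since $\R^{n}$ and any product $\p\times\R^{n-k}$ with $n-k\ge 1$ contain lines. The implication (iii)$\Rightarrow$(i) is equally short: if $\ell=p+\R\vec{v}\subset\pol$, then $h_{i}(p+t\vec{v})=h_{i}(p)+t\vec{h}_{i}(\vec{v})\ge 0$ for every $t\in\R$ forces $\vec{h}_{i}(\vec{v})=0$ for every $i$, so $\vec{v}\in L\setminus\{0\}$ and the claim yields that $\pol$ has no vertex.

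For the claim, the implication $L\ne\{0\}\Rightarrow\pol$ has no vertex is contained in the observation above: any $\vec{v}\in L\setminus\{0\}$ and any $x\in\pol$ give a line $x+\R\vec{v}\subset\pol$ that prevents $x$ from being extreme. The substantive direction is $L=\{0\}\Rightarrow\pol$ has a vertex, which I would prove by induction on $n$. When $n=1$ the hypothesis forces $m\ge 1$ and rules out $\pol=\R$, so $\pol$ is a closed half-line or a closed segment, whose endpoints are vertices. For the inductive step, pick any facet $\Ff_{i_{0}}=\pol\cap H_{i_{0}}$; it is an $(n-1)$-dimensional convex polyhedron inside the affine hyperplane $H_{i_{0}}$ cut out by the traces $H_{j}\cap H_{i_{0}}$ with $j\ne i_{0}$ that actually contribute, and the intersection of their directions inside $\vec{H}_{i_{0}}$ is contained in $L\cap\vec{H}_{i_{0}}=\{0\}$. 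The inductive hypothesis gives a vertex of $\Ff_{i_{0}}$, which is automatically a vertex of $\pol$.

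Finally, (i)$\Rightarrow$(ii) follows by product decomposition. Assume $\pol$ is degenerate, so $L\ne\{0\}$. If $L=\R^{n}$ then necessarily $m=0$ and $\pol=\R^{n}$. Otherwise set $k:=n-\dim L$, so $1\le k\le n-1$, and apply a linear change of coordinates sending $L$ onto $\{0\}\times\R^{n-k}$. Since $\vec{H}_{i}\supset L$ for every $i$, each $h_{i}$ depends only on the first $k$ variables, i.e.\ $h_{i}(\x)=g_{i}(\x_{1},\dots,\x_{k})$, so $\pol=\p\times\R^{n-k}$ with $\p:=\bigcap_{i}\{g_{i}\ge 0\}\subset\R^{k}$. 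The lineality subspace of $\p$ is $L\cap(\R^{k}\times\{0\})=\{0\}$, so by the claim $\p$ is nondegenerate, and $\dim\p=k$ follows from $\dim\pol=n$. The main obstacle is the inductive descent in the claim: one must verify that the traces $H_{j}\cap H_{i_{0}}$ form a legitimate presentation of $\Ff_{i_{0}}$ whose directional lineality is governed by $L\cap\vec{H}_{i_{0}}$, which is routine but uses the minimality of the original presentation $\{H_{1},\dots,H_{m}\}$ in an essential way.
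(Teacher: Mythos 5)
Your argument is correct, but note that the paper itself contains no proof to compare against: Lemma \ref{lem:deg2} is simply recalled from \cite[\S2.2]{fgu1}. Judged on its own, your route through the common direction space $L:=\bigcap_{i=1}^m\vec{H}_i$ (the lineality space of $\pol$) and the key claim ``$\pol$ has a vertex $\Leftrightarrow L=\{0\}$'' is the standard one and the cycle (i)$\Rightarrow$(ii)$\Rightarrow$(iii)$\Rightarrow$(i) closes correctly. Two points deserve sharper wording. First, in the inductive descent, the sentence ``the intersection of their directions inside $\vec{H}_{i_0}$ is contained in $L\cap\vec{H}_{i_0}=\{0\}$'' cannot be read as a monotonicity statement: discarding the non-contributing hyperplanes can only \emph{enlarge} the intersection of directions, so a priori one only gets a set containing $L$. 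The correct justification is the one you gesture at: if $\vec{v}$ lies in all the trace directions and $p\in\Ff_{i_0}$, then $p+\R\vec{v}\subset\Ff_{i_0}\subset\pol$, which forces $\vec{h}_j(\vec{v})=0$ for \emph{every} $j$ (including the discarded ones), hence $\vec{v}\in L=\{0\}$. Observe that this uses only that the traces present the nonempty facet $\Ff_{i_0}$, not the minimality of $\{H_1,\dots,H_m\}$ (minimality is what guarantees each $\Ff_i$ is a genuine $(n-1)$-dimensional face); more generally, the lineality space of a nonempty polyhedron is independent of the chosen presentation, and this is also what legitimizes applying the claim to $\p$ in (i)$\Rightarrow$(ii), where the presentation $\{g_i\geq0\}$ need not be minimal. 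Second, since the paper defines vertices as $0$-faces via iterated facets rather than as extreme points, the easy direction ``$L\neq\{0\}\Rightarrow$ no vertex'' is cleanest by noting that each facet $\Ff_i$ is invariant under translation by any $\vec{v}\in L$ (all $h_j$, and in particular $h_i$, are constant along $\vec{v}$), so by induction every face contains a line and no face can be a point; identifying $0$-faces with extreme points is of course also standard, but should be said. With these routine justifications made explicit, your proof is complete and self-contained.
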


\section{Complement of basic nowhere dense semialgebraic sets}\label{s3}

The purpose of this section is to reduce the proof of Theorem \ref{mainext} to the case of an $n$-dimensional convex polyhedron. To that end, we prove in this section the following more general result. Given polynomials $g_1,\dots,g_r\in\R[\x]$ consider the \em basic semialgebraic set \em $\Ss:=\{g_1*_10,\dots,g_r*_r0\}\subset\R^n$, where each $*_i$ represents either the symbol $>$ or $\geq$. 

\begin{thm}\label{semial}
Let $\Ss\subsetneq\R^n$ be a proper basic semialgebraic set. Then  $\R^{n+1}\setminus(\Ss\times\{0\})$ is a polynomial image of $\R^{n+1}$.
\end{thm}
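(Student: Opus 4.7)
The plan is to construct an explicit polynomial map $F=(\phi,\psi)\colon\R^{n+1}\to\R^{n+1}$ whose image is exactly $\R^{n+1}\setminus(\Ss\times\{0\})$. Writing $\Ss=\{g_1*_10,\dots,g_r*_r0\}\subsetneq\R^n$ and translating if necessary, I would assume $0\in\R^n\setminus\Ss$, so that at least one defining inequality $g_i*_i0$ is violated at the origin. Surjectivity of $F$ onto the target is equivalent to the conjunction of three conditions: (i) $\psi(x,t)=0$ implies $\phi(x,t)\notin\Ss$; (ii) every $a\in\R^n\setminus\Ss$ equals $\phi(x,t)$ for some $(x,t)$ with $\psi(x,t)=0$; and (iii) for each $b\neq 0$, the map $\phi$ sends the level set $\{\psi=b\}\subset\R^{n+1}$ onto all of $\R^n$.

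My approach would be to settle first the single-inequality closed case $\Ss=\{g\geq 0\}$ and then extend. Here I would try a template of the form
\[
\psi(x,t):=1+t^2\,g\bigl(\phi(x,t)\bigr),
\]
so that $\psi(x,t)=0$ automatically forces $g(\phi(x,t))=-1/t^2<0$ and hence $\phi(x,t)\notin\Ss$; this secures (i). For (ii) and (iii), I would take $\phi$ of the form $\phi(x,t)=x+tR(x,t)\vec v$, with $\vec v\in\R^n$ pointing out of $\Ss$ and $R$ an auxiliary polynomial, chosen so that for every prescribed $(a,b)\notin\Ss\times\{0\}$ the implicit system $\phi(x,t)=a$, $\psi(x,t)=b$ admits a real solution in $(x,t)$ (by sign analysis of the resulting polynomial equations in~$t$). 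The strict-inequality case and the multi-inequality case would be reduced to the single-inequality closed one by iterating or composing such constructions, using the various $g_i$ in turn to trim away one stratum of $\R^n\setminus\Ss$ at a time.

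The main obstacle is the tension between (i) and (iii). The image of any univariate polynomial is a closed interval, so it can never be $\R\setminus\{0\}$; consequently no product-type map $F(x,t)=(x,\psi(x,t))$ can realize the ``hole'' $\Ss\times\{0\}$. The first $n$ coordinates of $F$ must therefore depend genuinely on $t$, but as soon as they do, surjectivity of $\phi$ onto $\R^n$ on each horizontal slice $\{\psi=b\}$ ceases to be automatic and has to be engineered through a careful coupling of $\phi$ and $\psi$. Combining information from several defining inequalities of mixed strict and non-strict type using only one extra variable, while enforcing (i), (ii) and (iii) simultaneously, is the core technical difficulty of the argument.
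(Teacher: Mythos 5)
Your proposal is a plan rather than a proof: the objects that would carry the argument are never constructed. The auxiliary polynomial $R$ in $\phi(x,t)=x+tR(x,t)\vec v$ is left unspecified, condition (iii) (surjectivity of $\phi$ on every level set $\{\psi=b\}$, $b\neq0$) is only described as something to be ``engineered'', and the strict-inequality and multi-inequality cases are deferred to an unspecified iteration. Since you yourself identify (iii) as the core difficulty and do not resolve it, the argument has a genuine gap exactly there.

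Moreover, the heuristic that forces you into this difficulty is mistaken. You argue that no map of the form $F(x,t)=(x,\psi(x,t))$ can produce the hole $\Ss\times\{0\}$, because a univariate polynomial image is never $\R\setminus\{0\}$; hence the first $n$ coordinates must depend on $t$. That obstruction only applies if the product-type map is required to be surjective from \emph{all} of $\R^{n+1}$ onto the target in one step. The paper avoids it by first applying the non-product map $f_0(x_1,\dots,x_{n+1})=(x_1x_{n+1},\dots,x_nx_{n+1},x_{n+1})$, whose image is already $\R^{n+1}\setminus\bigl((\R^n\setminus\{\mathbf 0\})\times\{0\}\bigr)$, i.e.\ the largest possible hole. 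After that, one only needs product-type maps $(x,x_{n+1})\mapsto(x,x_{n+1}(1+x_{n+1}^2g(x)))$ (and a variant with the extra factor $g^2(x)+(x_{n+1}-1)^2$ for strict inequalities) applied to the \emph{previous image}: on each vertical line the domain is the punctured line $\R\setminus\{0\}$, and the image of a punctured line under an odd-degree polynomial can perfectly well be $\R\setminus\{0\}$ (when $g(a)\ast 0$ holds) or all of $\R$ (when it fails). Composing one such map per defining inequality, by induction on $r$, shrinks the hole from $(\R^n\setminus\{\mathbf 0\})\times\{0\}$ down to $\Ss\times\{0\}$ (after translating so $\mathbf 0\notin\Ss$). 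This decomposition into ``create the maximal hole, then restore it one inequality at a time'' removes the coupled implicit system you were trying to solve; as written, your route would still have to overcome that system, and nothing in the proposal shows it can be done.
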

\begin{remark}\label{semialr}
In particular, if $\pol\subset\R^n$ is a convex polyhedron of dimension $d<n$ which is not a hyperplane, then $\R^n\setminus\pol$ and $\R^n\setminus\Int(\pol)$ are polynomial images of $\R^n$.
\end{remark}

\begin{lem}\label{presemial}
Let $\Lambda\subset\R^n$ be a (non necessarily semialgebraic) set and let $g\in\R[\x]$ be a polynomial. Then  there are polynomial maps $f_i:\R^{n+1}\to\R^{n+1}$ such that 
\begin{align*}
&f_1(\R^{n+1}\setminus(\Lambda\times\{0\}))=\R^{n+1}\setminus((\Lambda\cap\{g\geq0\})\times\{0\})\quad\text{and}\\ 
&f_2(\R^{n+1}\setminus(\Lambda\times\{0\}))=\R^{n+1}\setminus((\Lambda\cap\{g>0\})\times\{0\}).
\end{align*}
\end{lem}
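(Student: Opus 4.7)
The plan is to exhibit explicit polynomial maps of the form $f_i(x,t) = (x, \phi_i(x,t))$ that fix the first $n$ coordinates and modify the last one through a cubic in $t$ with positive leading coefficient. My candidates are
\[
\phi_1(x,t) := t(t^2 + g(x)), \qquad \phi_2(x,t) := t\bigl((2t+1)^2 + g(x)\bigr),
\]
both manifestly polynomial. All of the verification reduces to examining a single $t$-fiber over each $y \in \R^n$.

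I would split the image analysis in two. First, for any target $(y,s)$ with $s \neq 0$: since $\phi_i(y,\cdot) : \R \to \R$ is a cubic with positive leading coefficient, it is surjective, so some $t$ solves $\phi_i(y,t) = s$; such a $t$ is necessarily nonzero because $\phi_i(y,0) = 0 \neq s$, hence $(y,t)$ lies outside $\Lambda \times \{0\}$ and maps onto $(y,s)$. Second, for a target $(y,0)$: the preimages in $\{y\} \times \R$ correspond to roots of $\phi_i(y,\cdot)$. The trivial root $t = 0$ gives a preimage in the complement exactly when $y \notin \Lambda$, so the only remaining question is whether $\phi_i(y,\cdot)$ admits a \emph{nonzero} real root.

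For $\phi_1$, a nonzero root requires $t^2 = -g(y)$, solvable with $t \neq 0$ iff $g(y) < 0$; hence $(y,0)$ is hit from the complement iff $y \notin \Lambda$ or $g(y) < 0$, that is, iff $y \notin \Lambda \cap \{g \geq 0\}$, which is the image equality claimed for $f_1$. For $\phi_2$, the equation $(2t+1)^2 = -g(y)$ is solvable iff $g(y) \leq 0$, and its roots $\tfrac{-1 \pm \sqrt{-g(y)}}{2}$ have sum $-1$, so at least one of them is nonzero (in particular, when $g(y) = 0$ the unique solution $t = -\tfrac{1}{2}$ is nonzero). Consequently $(y,0)$ is hit iff $y \notin \Lambda \cap \{g > 0\}$, giving the image equality for $f_2$.

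The only nontrivial design choice is $\phi_2$: one needs a cubic in $t$ whose locus of nonzero roots projects onto the \emph{closed} half $\{g \leq 0\}$, boundary included. The naive candidate $t(t^2 + g)$ fails exactly at $g = 0$, where its quadratic factor acquires a double root at $t = 0$. Replacing $t^2$ by $(2t+1)^2$ shifts that double root off the origin while leaving the sign of $-g$ in control of solvability, so the boundary stratum $\{g = 0\}$ gets incorporated as desired.
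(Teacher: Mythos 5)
Your proposal is correct and follows essentially the same strategy as the paper: a fiber-preserving polynomial map $(x,t)\mapsto(x,\phi_i(x,t))$ whose last coordinate is a univariate polynomial in $t$ over each fiber, with a nonzero real root exactly when $g(x)<0$ (resp. $g(x)\leq 0$), plus surjectivity of the fiber map to handle the points off the hyperplane. Only the explicit formulas differ: the paper takes $\phi_1=t(1+t^2g)$ and $\phi_2=t(1+t^2g)\bigl(g^2+(t-1)^2\bigr)$, whereas your $\phi_2=t\bigl((2t+1)^2+g\bigr)$ is a slightly simpler (cubic, with leading coefficient independent of $g$) way of forcing a nonzero root on the boundary stratum $\{g=0\}$; the verification is the same fiberwise root analysis in both cases.
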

\begin{proof} 
Denote $x:=(x_1,\dots,x_n)$ and consider the polynomial maps:
\begin{align*}
f_1:\R^{n+1}\to\R^{n+1},\ (x,x_{n+1})&\mapsto(x,x_{n+1}(1+x_{n+1}^2g(x))),\\[3pt]
f_2:\R^{n+1}\to\R^{n+1},\ (x,x_{n+1})&\mapsto(x,x_{n+1}(1+x^2_{n+1}g(x))(g^2(x)+(x_{n+1}-1)^2)).
\end{align*}
Let us see that those maps satisfy the desired conditions. Indeed, denote 
$$
\ell_a:=\{x_1=a_1,\dots,x_n=a_n\}\subset\R^{n+1}
$$ 
for each $a:=(a_1,\ldots,a_n)\in\R^n$, and observe that $f_1(\ell_a)=f_2(\ell_a)=\ell_a$. For each $a\in\R^n$, consider the univariate polynomials
$$
\begin{cases}
\alpha_a(\t):=\t(1+\t^2g(a)),\\
\beta_a(\t):=\t(1+\t^2g(a))(g^2(a)+(\t-1)^2),
\end{cases}
$$
which arise when we respectively substitute $x=a$ and $x_{n+1}=t$ in the last coordinates of $f_1$ and $f_2$. Observe that $\alpha_a$ has odd degree and
$$
f_1(\ell_a\setminus\{(a,0)\})=\{a\}\times\alpha_a(\R\setminus\{0\}))=
\begin{cases}
\ell_a\setminus\{0\}&\text{if $g(a)\geq0$,}\\
\ell_a&\text{if $g(a)<0$.}
\end{cases}
$$
This comes from the following facts
$$
\begin{cases}
\lim_{t\to\pm\infty}\alpha_a(t)=\pm\infty\ \text{and}\ \alpha_a^{-1}(0)=\{0\}&\text{if $g(a)\geq0$,}\\
\lim_{t\to\pm\infty}\alpha_a(t)=\mp\infty\ \text{and}\ \alpha_a^{-1}(0)=\{0,\pm\sqrt{-1/g(a)}\}&\text{if $g(a)<0$.}
\end{cases}
$$
Since $\R^{n+1}=\bigcup_{a\in\R^n}\ell_a$, one has
$$
\R^{n+1}\setminus\{\Lambda\times\{0\}\}=\bigcup_{a\in\R^n\setminus\Lambda}\ell_a\cup\bigcup_{a\in\Lambda\cap\{g\geq0\}}(\ell_a\setminus\{(a,0)\})\cup\bigcup_{a\in\Lambda\cap\{g<0\}}(\ell_a\setminus\{(a,0)\})
$$
and so
\begin{multline*}
f_1(\R^{n+1}\setminus(\Lambda\times\{0\}))=\bigcup_{a\in\R^n\setminus\Lambda}\ell_a\cup\bigcup_{a\in\Lambda\cap\{g\geq0\}}(\ell_a\setminus\{(a,0)\})\cup\bigcup_{a\in\Lambda\cap\{g<0\}}\ell_a\\
=((\R^n\setminus(\Lambda\cap\{g\geq0\}))\times\R)\cup((\Lambda\cap\{g\geq0\})\times(\R\setminus\{0\}))\\
=\R^{n+1}\setminus((\Lambda\cap\{g\geq0\})\times\{0\}).
\end{multline*}

The argument for the polynomial map $f_2$ is analogous. The polynomial $\beta_a$ has odd degree and
$$
f_2(\ell_a\setminus\{(a,0)\})=(a,\beta_a(\R\setminus\{0\}))=
\begin{cases}
\ell_a\setminus\{0\}&\text{if $g(a)>0$},\\
\ell_a&\text{if $g(a)=0$},\\
\ell_a&\text{if $g(a)<0$.}
\end{cases}
$$
This comes from the following facts
$$
\begin{cases}
\lim_{t\to\pm\infty}\beta_a(t)=\pm\infty\ \text{and}\ \beta_a^{-1}(0)=\{0\}&\text{if $g(a)>0$,}\\
\lim_{t\to\pm\infty}\beta_a(t)=\pm\infty\ \text{and}\ \beta_a^{-1}(0)=\{0,1\}&\text{if $g(a)=0$,}\\
\lim_{t\to\pm\infty}\beta_a(t)=\mp\infty\ \text{and}\ \beta_a^{-1}(0)=\{0,\pm\sqrt{-1/g(a)}\}&\text{if $g(a)<0$.}
\end{cases}
$$
Thus, since
$$
\R^{n+1}\setminus\{\Lambda\times\{0\}\}=\bigcup_{a\in\R^n\setminus\Lambda}\ell_a\cup\bigcup_{a\in\Lambda\cap\{g\leq0\}}(\ell_a\setminus\{(a,0)\})\cup\bigcup_{a\in\Lambda\cap\{g>0\}}(\ell_a\setminus\{(a,0)\}),
$$
we conclude that
\begin{multline*}
f_2(\R^{n+1}\setminus(\Lambda\times\{0\}))=\bigcup_{a\in\R^n\setminus\Lambda}\ell_a\cup\bigcup_{a\in\Lambda\cap\{g\leq0\}}\ell_a\cup\bigcup_{a\in\Lambda\cap\{g>0\}}(\ell_a\setminus\{(a,0)\})\\
=((\R^n\setminus(\Lambda\cap\{g\leq0\}))\times\R)\cup((\Lambda\cap\{g>0\})\times(\R\setminus\{0\}))\\
=\R^{n+1}\setminus((\Lambda\cap\{g>0\})\times\{0\}),
\end{multline*}
as wanted.
\end{proof}

Now, we are ready to prove Theorem \ref{semial}.

\begin{proof}[Proof of Theorem \em \ref{semial}] 
Let $\Ss:=\{g_1*_10,\dots,g_r*_r0\}$ be a proper basic semialgebraic subset of $\R^n$, where each $*_i\in\{>,\ \ge\}$. We proceed by induction on the number $r$ of inequalities needed to describe $\Ss$. Since $\Ss\subsetneq\R^n$ we may assume that ${\bf 0}\notin\Ss$. If $r=1$, we have $\Ss:=\{g_1*_10\}$ and we consider the polynomial map
$$
f_0:\R^{n+1}\to\R^{n+1},\quad (x_1,\dots,x_{n+1})\mapsto(x_1x_{n+1},\dots,x_nx_{n+1},x_{n+1}),
$$
whose image is $\R^{n+1}\setminus(\Lambda\times\{0\})$, where $\Lambda:=\R^n\setminus\{{\bf 0}\}$. We apply Lemma \ref{presemial} to $g:=g_1$ and we choose 
$$
f:=\begin{cases}
f_1&\text{if $\{g*_10\}=\{g\geq0\}$},\\
f_2&\text{if $\{g*_10\}=\{g>0\}$}.
\end{cases}
$$
Since ${\bf 0}\notin\Ss$ we have $\Lambda\cap\Ss=\Ss$; hence, the composition $f\circ h$ satisfies the equality
$$
(f\circ f_0)(\R^{n+1})=f(\R^{n+1}\setminus(\Lambda\times\{0\}))=\R^{n+1}\setminus((\Lambda\cap\Ss)\times\{0\})=\R^{n+1}\setminus(\Ss\times\{{0}\}),
$$
as wanted.

Suppose now that the result holds for each proper basic semialgebraic set of $\R^n$ described by $r-1\geq1$ inequalities, and let $\Ss:=\{g_1*_10,\dots, g_r*_r0\}$ be a proper basic semialgebraic subset of $\R^n$. We choose $\Lambda:=\{g_1*_10,\dots,g_{r-1}*_{r-1}0\}$ (that we may assume to be proper because otherwise $\Ss:=\{g_r*_r0\}$ can be described using just one inequality and this case has already been studied) and, by the inductive hypothesis, there is a polynomial map $f_0:\R^{n+1}\to\R^{n+1}$ such that $f_0(\R^{n+1})=\R^{n+1}\setminus(\Lambda\times\{0\})$. By Lemma \ref{presemial} for $g:=g_r$, and choosing 
$$
f:=\begin{cases}
f_1&\text{if $\{g*_r0\}=\{g\geq0\}$},\\
f_2&\text{if $\{g*_r0\}=\{g>0\}$},
\end{cases}
$$
there exists a polynomial map $f:\R^{n+1}\to\R^{n+1}$ such that
$$
f(\R^{n+1}\setminus(\Lambda\times\{0\}))=\R^{n+1}\setminus((\Lambda\cap\{g_r*_r0\})\times\{0\})=\R^{n+1}\setminus(\Ss\times\{0\}).
$$
Finally, the composition $f\circ f_0$ provides us
$$
(f\circ f_0)(\R^{n+1})=f(\R^{n+1}\setminus(\Lambda\times\{0\}))=\R^{n+1}\setminus(\Ss\times\{0\}),
$$
as wanted.
\end{proof}

\section{Trimming tools of type one}\label{s4}

The purpose of this section is to present and develop the main tools that we will use in Section~\ref{s5} to prove the part of Theorem \ref{mainext} concerning complements of $n$-dimensional convex polyhedra.

\subsection{First trimming position and trimming maps of first type}\label{lonchas}
Consider in $\R^n$ the natural fibration arising from the projection
$$
\pi_{n}:\R^n\to\R^n,\ (x_1,\dots,x_n)\mapsto (x_1,\dots,x_{n-1},0).
$$
The fiber $\pi_n^{-1}(a,0)$ of $\pi_n$ is a line parallel to the vector $\vec{e}_n:=(0,\ldots,0,1)$ for each $a\in\R^{n-1}$. Given an $n$-dimensional convex polyhedron $\pol\subset\R^n$, the intersection 
\begin{equation}\label{iia}
\Ii_a:=\pi_n^{-1}(a,0)\cap\pol
\end{equation} 
may be either empty or a closed interval (bounded or not). Denote by ${\mathfrak F}\subset\R^{n-1}$ the set of all points $a\in\R^{n-1}$ such that $\Int(\pol)\cap\pi_n^{-1}(a,0)=\varnothing$ and observe that the points $a\in{\mathfrak F}$ satisfy either $\Ii_a=\varnothing$ or $\Ii_a\subset\partial\pol$; in the latter case, $\Ii_a$ is contained in a facet of $\pol$. It holds that
$$
\Int(\pol)\cap\pi_n^{-1}(a,0)=\left\{\begin{array}{cl}
\varnothing&\text{if $a\in{\mathfrak F}$,}\\
\Int\Ii_a&\text{otherwise;}
\end{array}\right.
$$
hence, $\Int(\pol)=\bigsqcup_{a\in\R^{n-1}\setminus{\mathfrak F}}\Int\Ii_a$. 

\begin{lem}\label{prelonchas1} 
Let $\pol\subset\R^n$ be an $n$-dimensional convex polyhedron, let $a\in\R^{n-1}$ be such that $\Ii_a\neq\varnothing$, and take $p\in\partial\,\Ii_a$. Then  there is a facet $\Ff$ of $\pol$ nonparallel to $\vec{e}_n$ such that $p\in\Ff$.
\end{lem}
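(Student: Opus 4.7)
The overall idea is a short argument by contradiction using the minimal presentation of $\pol$ and the characterization of ``parallel to $\vec{e}_n$'' as $\vec{h_j}(\vec{e}_n)=0$.

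First, I would observe that since $\Ii_a$ is a (possibly unbounded) closed interval inside the line $L:=\pi_n^{-1}(a,0)$, the point $p\in\partial\,\Ii_a$ is an endpoint of this interval. In particular $p$ cannot lie in $\Int(\pol)$: if a full $\R^n$-neighborhood of $p$ were contained in $\pol$, its trace on $L$ would be a neighborhood of $p$ in $L$ inside $\Ii_a$, contradicting that $p$ is an endpoint. Thus $p\in\partial\pol=\bigcup_{i=1}^m\Ff_i$, so the set $J:=\{j\colonb h_j(p)=0\}$ is nonempty and the facets of $\pol$ containing $p$ are exactly $\{\Ff_j\}_{j\in J}$. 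For $j\notin J$ one has $h_j(p)>0$ since $p\in\pol$.

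Next, I would assume for contradiction that every facet through $p$ is parallel to $\vec{e}_n$, i.e.\ $\vec{h_j}(\vec{e}_n)=0$ for every $j\in J$. For each $t\in\R$ set $q_t:=p+t\vec{e}_n$; note $q_t\in L$ because $\pi_n(q_t)=\pi_n(p)=(a,0)$. Using that each $h_j$ is affine, for $j\in J$ we get
$$
h_j(q_t)=h_j(p)+t\,\vec{h_j}(\vec{e}_n)=0,
$$
so $q_t$ automatically satisfies the half-space inequalities coming from $J$ for every $t\in\R$. For the remaining indices $j\notin J$ we have $h_j(p)>0$, and by continuity there is some $\varepsilon>0$ such that $h_j(q_t)>0$ for all $|t|<\varepsilon$ and all such $j$.

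Combining these observations, $q_t\in\bigcap_{j=1}^m H_j^+=\pol$ for every $t\in(-\varepsilon,\varepsilon)$, hence $q_t\in L\cap\pol=\Ii_a$. This shows that $\Ii_a$ contains an open segment of $L$ centered at $p$, which contradicts $p\in\partial\,\Ii_a$. Therefore some $j\in J$ must satisfy $\vec{h_j}(\vec{e}_n)\neq0$, and the corresponding facet $\Ff_j$ contains $p$ and is nonparallel to $\vec{e}_n$, as required.

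The only slightly delicate point is the very first one: justifying that $p\in\partial\pol$ (equivalently, not in $\Int(\pol)$) from the assumption $p\in\partial\,\Ii_a$; the rest is a straightforward linearity-plus-continuity argument on the defining equations of the minimal presentation.
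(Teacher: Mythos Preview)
Your proof is correct. It takes a genuinely different route from the paper's argument, so a brief comparison is worthwhile.

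The paper proceeds by induction on $n$: it shows the equivalent statement that some facet through $p$ does \emph{not} contain $\Ii_a$. Given any facet $\Ff_0$ through $p$, either $\Ii_a\not\subset\Ff_0$ and one is done, or $\Ii_a\subset\Ff_0$, in which case $\Ii_a=\pi_n^{-1}(a,0)\cap\Ff_0$ and the inductive hypothesis applied to the $(n-1)$-dimensional polyhedron $\Ff_0$ yields a facet $\Ee$ of $\Ff_0$ through $p$ not containing $\Ii_a$; writing $\Ee=\Ff_0\cap\Ff$ for another facet $\Ff$ of $\pol$ finishes the argument. This is a structural proof through the face lattice.

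Your approach is more direct and arguably more elementary: you work entirely with the affine equations $h_j$ of the minimal presentation and observe that if every active constraint at $p$ satisfied $\vec{h_j}(\vec e_n)=0$, then $p+t\vec e_n$ would remain in $\pol$ for all small $|t|$, contradicting $p\in\partial\Ii_a$. The gain is that you avoid induction and the bookkeeping about faces of faces; the paper's approach, on the other hand, makes no explicit use of linearity of the $h_j$ beyond the face structure, and so reads as a statement purely about the combinatorics of $\pol$. Both are short; yours is perhaps the more natural one if one thinks of $\pol$ as $\bigcap_j H_j^+$, while the paper's fits better with its recurring theme of peeling off facets.
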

\begin{proof} 
It is enough to prove that there is a facet $\Ff$ of $\pol$ passing through $p$, which does not contain $\Ii_a$. To that end, we proceed by induction on the dimension $n$ of $\pol$. If $n=1$, $\Ii_a=\pol$ and obviously $p$ belongs to a facet of $\pol$ that does not contain $\Ii_a$. For $n>1$, we pick a facet $\Ff_0$ of $\pol$ passing through $p$. If $\Ff_0$ does not contain $\Ii_a$, we are done; otherwise, $\Ff_0$ is an $(n-1)$-dimensional convex polyhedron such that $\Ii_a=\pi_{n}^{-1}(a,0)\cap\Ff_0$. Thus, by induction hypothesis, there is a facet $\Ee$ of $\Ff_0$ passing through $p$, but which does not contain $\Ii_a$. Since $\dim\Ee=n-2$ and $\dim\Ff_0=n-1$, there is a facet $\Ff$ of $\pol$ such that $\Ee=\Ff_0\cap\Ff$; since $\Ii_a\subset\Ff_0$ but $\Ii_a\not\subset\Ee$, we conclude that $\Ii_a\not\subset\Ff$, as wanted.
\end{proof}

\begin{defines}\label{def5a} 
(1) An $n$-dimensional convex polyhedron $\pol\subset\R^n$ is in \emph{weak first trimming position} if:
\begin{itemize}
\item[(i)] For each $a\in\R^{n-1}$ the set $\Ii_a$ is bounded. 
\end{itemize}

If $\pol$ satisifies also that:
\begin{itemize}
\item[(ii)] The set ${\mathfrak A}_\pol:=\{a\in\R^{n-1}:\ a_{n-1}\leq0, \Ii_a\neq\varnothing,\ (a,0)\not\in\Ii_a\}$ is bounded,
\end{itemize}
we will say that $\pol$ is in \emph{strong first trimming position}.

Besides, if ${\mathfrak A}_\pol=\varnothing$ we say that the strong first trimming position of $\pol$ is \em optimal\em.

(2) On the other hand, we say that the (weak or strong) first trimming position of $\pol\subset\R^n$ is \em extreme with respect to the facet $\Ff$ \em if $\Ff\subset\{x_{n-1}=0\}$ and $\pol\subset\{x_{n-1}\leq0\}$.
\end{defines}

\begin{figure}[t]
\begin{minipage}[b]{0.265\linewidth}
\begin{center}
\begin{tikzpicture}[line join=round,background rectangle/.style=
{double,draw=gray},
show background rectangle,inner frame sep=0.2cm, scale=0.4]
\filldraw[fill=gray!80,fill opacity=0.8,draw=gray,style=dashed](-4,-2)--(0,1.5)--(0,3)--(-4,4)--cycle;
\draw[ultra thick,<->] (-4,-2)--(0,1.5)--(0,3)--(-4,4);
\draw (-3.5,1.5) node{$\EuScript \pol$};

\draw[<->] (-5,0) -- (3,0)node[below] {$x_1$};
\draw[<->] (0,-3) -- (0,4)node[above] {$x_2$};
\draw[densely dashed,very thin] (-1,3.25)--(0,3.25) node[right]{$q$};
\draw[densely dashed,very thin] (-1,0.75)--(0,0.75);
\draw (0,0.75)node[right]{$p$};
\draw[very thick, gray] (-1,0.75)--(-1,3.25);
\filldraw[fill=gray,draw=gray] (-1,0.75) circle (0.5mm);
\filldraw[fill=gray,draw=gray] (-1,3.25) circle (0.5mm);
\path (-1,2) node[left]{$\EuScript I_a$} (-1,0) node[below]{$a$};

\end{tikzpicture}\\
(a)\vspace{4pt}\\
\begin{tikzpicture}[line join=round,background rectangle/.style=
{double,draw=gray},
show background rectangle,inner frame sep=0.2cm,scale=0.4]

\filldraw[fill=gray!80,fill opacity=0.8,draw=gray,style=dashed](-4,-2)--(0,-1)--(0,3)--(-4,4)--cycle;
\draw[ultra thick,<->] (-4,-2)--(0,-1)--(0,3)--(-4,4);
\draw (-3.5,1.5) node{$\EuScript \pol$};

\draw[<->] (-5,0) -- (3,0)node[below] {$x_1$};
\draw[<->] (0,-3) -- (0,4)node[above] {$x_2$};
\draw[densely dashed,very thin] (-1,3.25)--(0,3.25) node[right]{$q$};
\draw[densely dashed,very thin] (-1,-1.25)--(0,-1.25) node[right]{$p$};
\draw[very thick, gray] (-1,-1.25)--(-1,3.25);
\filldraw[fill=gray,draw=gray] (-1,-1.25) circle (0.5mm);
\filldraw[fill=gray,draw=gray] (-1,3.25) circle (0.5mm);
\path (-1,2) node[left]{$\EuScript I_a$};

\end{tikzpicture}\\
(b)
\end{center}
\label{fig:figure1ab}
\end{minipage}
\hspace{0.5cm}
\begin{minipage}[b]{0.55\linewidth}
\begin{center}
\begin{tikzpicture}[line join=round,background rectangle/.style=
{double,draw=gray},
show background rectangle,inner frame sep=0.3cm, xscale=0.64, yscale=0.671]
\tikzstyle{planelinestyle} = [cull=false,red,dashed]
\tikzstyle{blinestyle} = [cull=false,red,dashed]
\tikzstyle{plinestyle} = [cull=false,red,dashed]
\tikzstyle{verstyle}=[cull=false,fill=blue!20,fill opacity=0.8]
\tikzstyle{horstyle}=[cull=false,fill=blue!20,fill opacity=0.8]
\tikzstyle{boxstyle}=[cull=false,fill=blue!20,fill opacity=0.8]
\tikzstyle{polystyle} = [cull=false,fill=blue!20,fill opacity=0.8]
\filldraw[fill=gray!40,fill opacity=0.3,draw=none](-4.529,1.604)--(-6.293,.091)--(-6.293,-5.79)--(-4.529,-.16)--cycle;
\draw[dashed,ultra thick,draw=white!60](-1.588,-4.891)--(-1.588,-.186);
\draw[draw=black!60,dashed,ultra thin](-6.293,.091)--(-6.293,-5.79)--(-4.529,-.16)--(-4.529,1.604);
\filldraw[fill=black!60,fill opacity=0.8,draw=none](-1,.515)--(-4.529,-.16)--(-4.529,1.604)--cycle;
\draw[draw=black,very thick](-1,.515)--(-4.529,-.16)--(-4.529,1.604);
\filldraw[fill=black!60,fill opacity=0.8,draw=none](-6.293,-5.79)--(-4.529,-.16)--(-1,.515)--(-.076,-.065)--(-1.588,-4.891)--cycle;
\draw[draw=black,very thick](-.076,-.065)--(-1.588,-4.891)--(-6.293,-5.79)--(-4.529,-.16)--(-1,.515);
\draw[dashed,ultra thick,draw=white!60](-4.529,1.604)--(-4.529,-.16)--(-6.293,-5.79)--(-1.588,-4.891)--(-1.588,-1.362);
\filldraw[fill=gray!40,fill opacity=0.3,draw=none](-6.293,.091)--(-1.588,-1.362)--(-1.588,-4.891)--(-6.293,-5.79)--cycle;
\draw[draw=black!60,dashed,ultra thin](-1.588,-1.362)--(-1.588,-4.891)--(-6.293,-5.79)--(-6.293,.091);
\filldraw[fill=green!40,fill opacity=0.4,draw=none](-1.588,-1.362)--(-1.588,-6.067)--(1.588,-3.343)--(1.588,1.362)--cycle;
\draw[very thin](-1.588,-1.362)--(-1.588,-6.067)--(1.588,-3.343)--(1.588,1.362);
\filldraw[fill=black!60,fill opacity=0.8,draw=none](-1.588,-1.362)--(-.076,-.065)--(-1.588,-4.891)--cycle;
\draw[draw=black,very thick](-.076,-.065)--(-1.588,-4.891);
\draw[thick,arrows=<->](-2.117,-1.816)--(0,0)--(3.529,-1.09);
\filldraw[fill=blue!40,fill opacity=0.8,very thin](-3.117,2.815)--(4.235,.545)--(1.059,-2.179)--(-6.293,.091)--cycle;
\filldraw[fill=gray!40,fill opacity=0.3,draw=none](-4.529,1.604)--(-4.529,3.369)--(-6.293,1.855)--(-6.293,.091)--cycle;
\draw[draw=black!60,dashed,ultra thin](-4.529,1.604)--(-4.529,3.369)--(-6.293,1.855)--(-6.293,.091);
\filldraw[fill=black!60,fill opacity=0.8,draw=none](-1,.515)--(-4.529,1.604)--(-4.529,3.369)--(.176,1.328)--(.176,.739)--cycle;
\draw[draw=black,very thick](-4.529,1.604)--(-4.529,3.369)--(.176,1.328)--(.176,.739)--(-1,.515);
\filldraw[fill=black!60,fill opacity=0.8,draw=none](-1,.515)--(.176,.739)--(-.076,-.065)--cycle;
\draw[draw=black,very thick](-1,.515)--(.176,.739)--(-.076,-.065);
\draw[dashed,ultra thick,draw=white!60](-6.293,1.855)--(-4.529,3.369)--(-4.529,1.604);
\filldraw[draw=black,very thick,fill=black!60,fill opacity=0.8](-6.293,1.855)--(-4.529,3.369)--(.176,1.328)--(-1.588,-.186)--cycle;
\filldraw[fill=gray!40,fill opacity=0.3,draw=none](-6.293,.091)--(-6.293,1.855)--(-1.588,-.186)--(-1.588,-1.362)--cycle;
\draw[draw=black!60,dashed,ultra thin](-6.293,.091)--(-6.293,1.855)--(-1.588,-.186)--(-1.588,-1.362);
\draw[dashed,ultra thick,draw=white!60](-1.588,-.186)--(-6.293,1.855);
\filldraw[fill=green!40,fill opacity=0.4,draw=none](-1.588,1.578)--(-1.588,-1.362)--(1.588,1.362)--(1.588,4.303)--cycle;
\draw[very thin](1.588,1.362)--(1.588,4.303)--(-1.588,1.578)--(-1.588,-1.362);
\draw[dashed,ultra thick,draw=white!60](-1.588,-1.362)--(-1.588,-.186);
\draw[thick,arrows=->](0,0)--(0,3.529);
\filldraw[fill=black!60,fill opacity=0.8,draw=none](-1.588,-1.362)--(-1.588,-.186)--(.176,1.328)--(.176,.739)--(-.076,-.065)--cycle;
\draw[draw=black,very thick](-1.588,-.186)--(.176,1.328)--(.176,.739)--(-.076,-.065);
\path (-2.117,-1.816) node[below]{$x_1$} (3.529,-1.09) node[below]{$x_2$} (0,3.529) node[above]{$x_3$};
\path (-5.999,-1.47) node{$\EuScript K$};
\path (1.588,4.303) node[above]{$x_2=0$};
\path (3.2,1.5) node[below right]{$x_3=0$};
\end{tikzpicture}\\
(c)
\end{center}
\label{fig:figure1c}
\end{minipage}
\vspace{-5mm}
\caption{{\small (a) shows a convex polygon in strong first trimming position, (b) shows a convex polygon in optimal strong first trimming position, and (c) shows a convex polyhedron in strong first trimming position.}}\label{fig:1}
\end{figure}
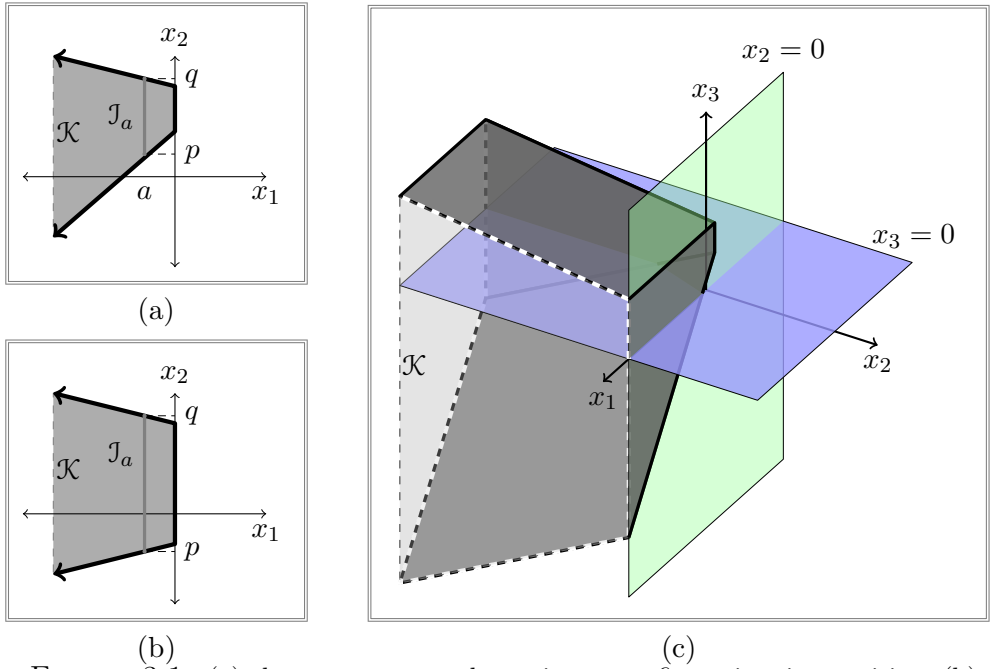

\begin{lem}\label{aajuste}\em
Let $\pol\subset\R^n$ be an $n$-dimensional convex polyhedron in strong first trimming position. Then  there is $N>0$ such that
$$
A:=\bigcup_{a\in{\mathfrak A}_\pol}\partial\Ii_a\subset{\mathfrak A}_\pol\times[-N,N]
$$
\end{lem}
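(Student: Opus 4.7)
The plan is to combine Lemma \ref{prelonchas1} with a simple compactness argument. Let me sketch the approach.

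First, I would note that every point $p \in A$ has the form $p = (a, p_n)$ with $a \in {\mathfrak A}_\pol$ by the very definition of $A$; hence the inclusion in the first $n-1$ coordinates is automatic, and all the work lies in bounding $|p_n|$ uniformly in $a \in {\mathfrak A}_\pol$ and in the choice of boundary point $p \in \partial \Ii_a$.

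Next, the key input is Lemma \ref{prelonchas1}: for each $a \in {\mathfrak A}_\pol$ and each $p \in \partial \Ii_a$ there exists a facet $\Ff$ of $\pol$, nonparallel to $\vec{e}_n$, with $p \in \Ff$. Let $\{\Ff_{i_1}, \ldots, \Ff_{i_s}\}$ be the (finite) subcollection of facets of $\pol$ which are nonparallel to $\vec{e}_n$, and let $H_{i_k} = \{h_{i_k} = 0\}$ be the corresponding affine hyperplanes. Since $H_{i_k}$ is nonparallel to $\vec{e}_n$, the coefficient of $\x_n$ in $h_{i_k}$ is nonzero, so $H_{i_k}$ is the graph of an affine function $L_{i_k}: \R^{n-1} \to \R$, meaning $H_{i_k} = \{(y, L_{i_k}(y)) : y \in \R^{n-1}\}$. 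Thus if $p = (a, p_n) \in \Ff_{i_k}$, then $p_n = L_{i_k}(a)$.

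Finally, since $\pol$ is in strong first trimming position, the set ${\mathfrak A}_\pol$ is bounded, so its closure $K := \overline{{\mathfrak A}_\pol}$ is compact. Each $L_{i_k}$ is continuous on $\R^{n-1}$ and hence attains a finite maximum of its absolute value on $K$; setting
$$
N := \max_{1 \leq k \leq s} \, \max_{a \in K} \, |L_{i_k}(a)|,
$$
we obtain $|p_n| \leq N$ for every $p = (a, p_n) \in A$. This gives $A \subset {\mathfrak A}_\pol \times [-N, N]$, as required.

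There is no real obstacle here: the statement is essentially a uniform boundedness result that follows from the finiteness of the facet structure of $\pol$ together with the boundedness of ${\mathfrak A}_\pol$ built into the strong first trimming position hypothesis; the only care needed is to invoke Lemma \ref{prelonchas1} to rule out facets parallel to $\vec{e}_n$ (on which $x_n$ is not a function of $(x_1, \ldots, x_{n-1})$) before applying the compactness argument.
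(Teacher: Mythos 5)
Your proof is correct and follows essentially the same route as the paper: invoke Lemma \ref{prelonchas1} to place each boundary point of $\Ii_a$ on a facet nonparallel to $\vec{e}_n$, write the last coordinate of such a point as an affine function of $a$ (your $L_{i_k}$ are exactly the paper's functions $g_i$ obtained by normalizing the coefficient of $\x_n$ to one), and bound these finitely many continuous functions on the compact closure $\ol{{\mathfrak A}}_\pol$.
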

\begin{proof}
Indeed, since $\pol$ is in strong first trimming position, the set $\Ii_a$ is bounded for each $a\in\R^{n-1}$. By Lemma \ref{prelonchas1}, for each $a\in{\mathfrak A}_\pol$ the extrems $(a,p_a)$ and $(a,q_a)$ of the interval $\Ii_a=\{a\}\times[p_a,q_a]$ belong to facets of $\pol$ that are not parallel to the vector $\vec{e}_n$. Let $\Ff_1,\ldots,\Ff_s$ be the facets of $\pol$ nonparallel to the vector $\vec{e}_n$ and $h_i:=b_{0i}+b_{1i}\x_1+\cdots+b_{ni}\x_n$ a degree one equation of the hyperplane $H_i$ generated by $\Ff_i$ for $1\leq i\leq s$. Since $\Ff_i$ is nonparallel to the vector $\vec{e}_n$ we may assume that $b_{ni}=1$. Define $g_i:=-b_{0i}-b_{1i}\x_1-\cdots-b_{n-1,i}\x_{n-1}$ and observe that for each $a\in{\mathfrak A}_\pol$ there are $1\leq i,j\leq s$ such that $g_i(a)=p_a$ and $g_{j}(a)=q_a$; hence, $p_a,q_a\in\{g_1(a),\ldots,g_s(a)\}$. Since ${\mathfrak A}_\pol$ is bounded, its closure $\ol{\mathfrak A}_\pol$ is compact, and so the continuous functions $g_i$ are bounded over $\ol{\mathfrak A}_\pol$; hence, there is $N>0$ such that $p_a,q_a\in[-N,N]$ for each $a\in{\mathfrak A}_\pol$, as wanted.
\end{proof}

\begin{define}\label{1tm}
Let $P,Q\in\R[\x]$ be two nonzero polynomials such that $\deg_{\x_n}(Q(a,\x_n))\leq2\deg_{\x_n}(P(a,\x_n))$ for each $a\in\R^{n-1}$ and $Q$ is strictly positive on $\R^n$. Denote $\beta_{P,Q}$ the regular function
$$
\beta_{P,Q}(\x):=\x_n\Big(1-\x_{n-1}\frac{P^2(\x)}{Q(\x)}\Big),
$$
and define the \emph{trimming map $\TF_{P,Q}$ of type} I associated to $P$ and $Q$ as the regular map
$$
\TF_{P,Q}:\R^n\to\R^n,\ x:=(x_1,\dots,x_n)\mapsto(x_1,\dots,x_{n-1},\beta_{P,Q}(x)).
$$ 
Observe that if $Q:=M>0$ is constant then $\TF_{P,M}$ is a polynomial map. 

Recall that the \em degree \em of a rational function $h:=f/g\in\R(\x):=\R(\x_1,\ldots,\x_n)$ is the difference between the degrees of the numerator $f$ and the denominator $g$.
\end{define}
\begin{remarks}\label{veryeasy}
The following properties are straightforward:
\begin{itemize}
\item[(i)] For each $a\in\R^{n-1}$, the regular function $\beta_{P,Q}^a(\x_n):=\beta_{P,Q}(a,\x_n)\in\R(\x_n)$ (which depends just on the variable $\x_n$) has odd degree $\geq1$; hence, $\beta^a_{P,Q}(\R)=\R$ for each $a\in\R^{n-1}$. Moreover, if $a':=(a_1,\ldots,a_{n-2})\in\R^{n-2}$ we have $\beta_{P,Q}(a',0,\x_n)=\x_n$.
\item[(ii)] For each $a\in\R^{n-1}$ the equality $\TF_{P,Q}(\pi_n^{-1}(a,0))=\pi_n^{-1}(a,0)$ holds, and so $\TF_{P,Q}(\R^n)=\R^n$.
\item[(iii)] The set of fixed points of $\TF_{P,Q}$ is $\{x_{n-1}x_nP(x)=0\}$.
\item[(iv)] The condition $\deg_{\x_n}(Q(a,\x_n))\leq2\deg_{\x_n}(P(a,\x_n))$ for each $a\in\R^{n-1}$ holds if $\deg_{\x_n}(Q)\leq 2\deg(P)=2\deg_{\x_n}(P)$.
\item[(iv)] Let $\pol\subset\R^n$ be an $n$-dimensional convex polyhedron and let $\Ff_1,\ldots,\Ff_r$ be the facets of $\pol$ which are contained in hyperplanes nonparallel to $\vec{e}_n$. Let $f_1,\dots,f_r\in\R[\x]$ be degree one polynomial equations of the hyperplanes $H_1,\ldots,H_r$ respectively generated by the facets $\Ff_1,\ldots,\Ff_r$, and define $P:=f_1\cdots f_r$. Then  $P$ is identically cero on the facets $\Ff_1,\ldots,\Ff_r$ and has degree $\deg(P)=\deg_{\x_n}(P)=r$.
\end{itemize}
\end{remarks}

Next, we recall the following elementary polynomial bounding inequality.

\begin{lem}\label{acotacion}
Let $P\in\R[\x]$ be a polynomial of degree $d\geq0$, let $M$ be the sum of the absolute values of the nonzero coefficients of $P$ and let $m$ be a positive integer such that $d\leq2m$. Then  $|P(x)|\leq M(1+\|x\|^2)^m$ for each $x\in\R^n$.
\end{lem}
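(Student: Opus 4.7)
The plan is to bound $P$ monomial by monomial, using the elementary fact that each coordinate is controlled by the Euclidean norm.

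Write $P(\x)=\sum_{\alpha}c_\alpha\x^\alpha$ where $\alpha=(\alpha_1,\ldots,\alpha_n)$ ranges over multi-indices with $|\alpha|:=\alpha_1+\cdots+\alpha_n\le d$ and $c_\alpha\in\R$ (most being zero). The quantity $M$ in the statement equals $\sum_\alpha|c_\alpha|$. I would then apply the triangle inequality to get
$$
|P(x)|\le\sum_\alpha|c_\alpha|\,|x^\alpha|=\sum_\alpha|c_\alpha|\,|x_1|^{\alpha_1}\cdots|x_n|^{\alpha_n}
$$
for every $x=(x_1,\ldots,x_n)\in\R^n$, which reduces the lemma to controlling each monomial $|x^\alpha|$ uniformly by $(1+\|x\|^2)^m$.

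For each index $i$, one has $x_i^2\le\|x\|^2\le 1+\|x\|^2$, hence $|x_i|\le(1+\|x\|^2)^{1/2}$. Multiplying these inequalities yields
$$
|x^\alpha|=|x_1|^{\alpha_1}\cdots|x_n|^{\alpha_n}\le(1+\|x\|^2)^{|\alpha|/2}.
$$
Since $|\alpha|\le d\le 2m$ and $1+\|x\|^2\ge 1$, the exponent $|\alpha|/2$ can be enlarged up to $m$ without decreasing the right hand side, giving $|x^\alpha|\le(1+\|x\|^2)^m$ for every $\alpha$ occurring in $P$ and every $x\in\R^n$.

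Plugging this uniform bound into the monomial expansion of $|P(x)|$ I obtain
$$
|P(x)|\le\Big(\sum_\alpha|c_\alpha|\Big)(1+\|x\|^2)^m=M(1+\|x\|^2)^m,
$$
as required. There is no real obstacle here; the only ``choice'' is selecting the convenient coordinate bound $|x_i|\le(1+\|x\|^2)^{1/2}$ (rather than e.g. $|x_i|\le 1+\|x\|$), which automatically produces the desired exponent $m$ from the hypothesis $d\le 2m$ and simultaneously gives a factor that is always $\ge 1$ so the estimate remains valid for small~$x$.
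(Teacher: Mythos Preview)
Your proof is correct and follows essentially the same approach as the paper: bound each monomial via $|x_i|\le(1+\|x\|^2)^{1/2}$ to get $|x^\alpha|\le(1+\|x\|^2)^{|\alpha|/2}\le(1+\|x\|^2)^m$, then sum using the triangle inequality. The paper's argument is line-for-line the same, only with different notation for the multi-indices and coefficients.
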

\begin{proof}
Write $P:=\sum_{a_\nu\neq0}a_\nu\x^\nu$, where $\nu:=(\nu_1,\ldots,\nu_n)$, $\x:=(\x_1,\ldots,\x_n)$ and $\x^\nu:=\x_1^{\nu_1}\cdots\x_n^{\nu_n}$. For each $x:=(x_1,\ldots,x_n)\in\R^n$ we have
$$
|x^\nu|=|x_1|^{\nu_1}\cdots|x_n|^{\nu_n}\leq\prod_{i=1}^n(1+\|x\|^2)^{\nu_i/2}\leq(1+\|x\|^2)^m;
$$
hence,
$$
|P(x)|\leq\sum_{a_\nu\neq0}|a_\nu||x^\nu|\leq\sum_{a_\nu\neq0}|a_\nu|(1+\|x\|^2)^m=M(1+\|x\|^2)^m,
$$
as wanted. 
\end{proof}

\begin{lem}\label{bp} 
Let $P\in\R[\x]$ be a nonzero polynomial such that $d:=\deg(P)=\deg_{\x_n}(P)$. Then 
\begin{itemize}
\item[(i)] For each $Q\in\R[\x]$ strictly positive with $\deg_{\x_n}(Q)\leq 2\deg(P)$, the partial derivative $\frac{\partial\beta_{P,Q}}{\partial\x_n}$ has constant value $1$ on the set $\{x_{n-1}P(x)=0\}$.
\item[(ii)] For each compact $K\subset\R^n$ there is a positive integer $M_K$ such that if $M\geq M_K$ the partial derivative $\frac{\partial\beta_{P,M}}{\partial\x_n}$ is strictly positive over the set $K\cap\{x_{n-1}\leq0\}$.
\item[(iii)] The strictly positive polynomials $Q_M:=M(1+\x_{n-1}^2)(1+\|\x\|^2)^d$ have $\deg_{\x_n}(Q_M)=2\deg(P)=2\deg_{\x_n}(P)$ and there is $M_0>0$ such that the partial derivative $\frac{\partial\beta_{P,Q_M}}{\partial\x_n}$ is strictly positive on the set $\{x_{n-1}\leq0\}$ for each $M\geq M_0$.
\end{itemize}
\end{lem}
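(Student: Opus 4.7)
All three parts start from the direct computation
\[
\frac{\partial \beta_{P,Q}}{\partial x_n} \;=\; 1 - x_{n-1}\!\left[\frac{P^2}{Q} + x_n\,\frac{2PP_n Q - P^2 Q_n}{Q^2}\right],
\]
where $P_n,Q_n$ denote the $x_n$-derivatives. Part (i) is then immediate: on $\{x_{n-1}=0\}$ the bracketed correction is killed by its prefactor, while on $\{P=0\}$ every summand inside the bracket contains $P$ as a factor (through $P^2$ or $PP_n$), so the derivative equals $1$ in both cases. The degree claim of (iii) is equally direct: $(1+x_{n-1}^2)$ carries no $x_n$ and $(1+\|x\|^2)^d$ contributes the monomial $x_n^{2d}$, so $\deg_{x_n}(Q_M)=2d=2\deg(P)=2\deg_{x_n}(P)$.

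For part (ii) I specialise to $Q\equiv M$, which kills $Q_n$ and reduces the formula to $1-(x_{n-1}/M)\,P(P+2x_n P_n)$. On the compact set $K\cap\{x_{n-1}\le 0\}$ the continuous quantities $|x_{n-1}|$ and $|P(P+2x_n P_n)|$ attain finite maxima, say $N$ and $L$; any integer $M_K>NL$ then forces the derivative to be at least $1-NL/M>0$ for every $M\ge M_K$.

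The substantive content of the lemma is the global positivity in (iii), and the precise shape of $Q_M$ is tailored for it. My plan is to write $\partial\beta_{P,Q_M}/\partial x_n=1-E_M(x)$ with $E_M$ a sum of three summands (arising from $P^2/Q_M$, $x_n PP_n/Q_M$ and $x_n P^2(Q_M)_n/Q_M^2$, each carrying a factor $x_{n-1}$) and prove $|E_M(x)|\le C/M$ uniformly on $\{x_{n-1}\le 0\}$ for a constant $C$ depending only on $P$ and $d$; then any $M_0\ge 2C$ works. Two structural tools make the bound global. First, the elementary inequality $|x_{n-1}|/(1+x_{n-1}^2)\le 1/2$ absorbs the unbounded prefactor $|x_{n-1}|$ against the $(1+x_{n-1}^2)$ factor of $Q_M$, eliminating all $x_{n-1}$-dependence. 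Second, Lemma \ref{acotacion} bounds $|P^2|$ and $|x_n PP_n|$ by $A(1+\|x\|^2)^d$ and $|x_n^2P^2|$ by $A'(1+\|x\|^2)^{d+1}$; after computing $(Q_M)_n=2dM\,x_n(1+x_{n-1}^2)(1+\|x\|^2)^{d-1}$, the $(1+\|x\|^2)^d$ factor in the denominator $Q_M$ (respectively the $(1+\|x\|^2)^{d+1}$ arising in $Q_M^2/|(Q_M)_n|$) cancels the numerator growth and each summand becomes $O(1/M)$.

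The main obstacle is the global rather than local character of (iii): on compact sets a constant $Q$ suffices, as in (ii), but for uniform control over the unbounded half-space $\{x_{n-1}\le 0\}$ both factors of $Q_M$ are indispensable — the exponent $d=\deg(P)$ so that $P^2/Q_M$ decays uniformly, and the factor $(1+x_{n-1}^2)$ so that the unbounded prefactor $|x_{n-1}|$ can be tamed by $1/2$. Weakening either would break the global estimate. Once $Q_M$ is in hand, the verification of the three summands is a routine application of Lemma \ref{acotacion}.
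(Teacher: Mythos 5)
Your proposal is correct and follows essentially the same route as the paper's proof: the same expansion of $\frac{\partial\beta_{P,Q}}{\partial\x_n}$, the bound of Lemma \ref{acotacion} applied to the numerators $P^2$, $\x_nP\frac{\partial P}{\partial\x_n}$, $\x_n^2P^2$, and the inequality $2|x_{n-1}|\leq 1+x_{n-1}^2$ to absorb the unbounded factor $x_{n-1}$ into $Q_M$. The only (harmless) deviation is that in (ii) and (iii) you also estimate the term $x_{n-1}P^2/Q$ via Lemma \ref{acotacion} instead of exploiting its sign on $\{x_{n-1}\leq0\}$ as the paper does, which in fact yields strict positivity of the derivative on a slightly larger set than the statement requires.
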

\begin{proof}
Observe first that
$$
\frac{\partial\beta_{P,Q}}{\partial\x_n}=\Big(1-\x_{n-1}\frac{P^2(\x)}{Q(\x)}\Big)-\x_{n-1}\x_n\frac{P(\x)}{Q(\x)}\Big(2\frac{\partial P}{\partial\x_n}(\x)-\frac{\partial Q}{\partial\x_n}(\x)\frac{P(\x)}{Q(\x)}\Big)
$$
so that (i) clearly holds.

To prove (ii), denote by $M_K\geq0$ the maximum of the continuous function 
$$
\R^n\to\R,\ x:=(x_1,\dots,x_n)\mapsto\Big|2x_{n-1}x_n P(x)\frac{\partial P}{\partial\x_n}(x)\Big|.
$$ 
over the compact $K\cap\{x_{n-1}\leq0\}$. If $M\geq M_K$, then $\frac{\partial\beta_{P,M}}{\partial\x_n}$ is strictly positive on $K\cap\{x_{n-1}\leq0\}$.

To prove (iii), observe first that $\deg(\x_n P\frac{\partial P}{\partial\x_n})\leq 2d$ and so, by Lemma \ref{acotacion}, there is $M_0>0$ such that for each $M\geq M_0$
$$
\Big|x_n P(x)\frac{\partial P}{\partial\x_n}(x)\Big|+|d P^2(x)|\leq \frac{M}{2}(1+\|x\|^2)^d
$$
for each $x\in\R^n$. Fix $M\geq M_0$ and, for simplicity, denote $Q:=Q_M$; observe that
$$
\frac{\partial Q}{\partial\x_n}(\x)\frac{1}{Q(\x)}=\frac{2d\x_n}{1+\|\x\|^2}.
$$
Using the facts that $2|x_{n-1}|\leq(1+x_{n-1}^2)$ and $|x_n|^2<1+\|x\|^2$ for each $x\in\R^n$, we deduce that
\begin{equation*}
\begin{split}
\frac{\partial\beta_{P,Q}}{\partial\x_n}(x)&\geq\Big(1-x_{n-1}\frac{P^2(x)}{Q(x)}\Big)-\frac{2|x_{n-1}|}{Q}\Big(\Big|x_nP(x)\frac{\partial P}{\partial\x_n}(x)\Big|+|dP^2(x)|\frac{|x_n|^2}{1+\|x\|^2}\Big)\\
&\geq\frac{1}{2}-x_{n-1}\frac{P^2(x)}{Q(x)}>0,
\end{split}
\end{equation*}
for each $x\in\{x_{n-1}\leq0\}$.
\end{proof}

\begin{lem}\label{trim}
Let $\pol\subset\R^n$ be an $n$-dimensional convex polyhedron in weak first trimming position and let $\Ff_1,\ldots,\Ff_r$ be the facets of $\pol$ contained in hyperplanes nonparallel to the vector $\vec{e}_n$. Let $P\in\R[\x]$ be a nonzero polynomial which is identically zero on $\Ff_1,\ldots,\Ff_r$ and has degree $\deg(P)=\deg_{\x_n}(P)$ \em (see Remark \ref{veryeasy})\em. Then  there is a strictly positive $Q\in\R[\x]$ with $\deg_{\x_n}(Q)\leq2\deg(P)$ such that 
\begin{itemize}
\item[(i)] $\mathsf{F}_{P,Q}(\R^n\setminus\pol)=\R^n\setminus(\pol\cap\{x_{n-1}\leq0\})$, 
\item[(ii)] $\mathsf{F}_{P,Q}(\R^n\setminus\Int(\pol))=\R^n\setminus(\Int(\pol)\cap\{x_{n-1}\leq0\})$.
\end{itemize}
Moreover, if $\pol$ is in strong first trimming position, we can choose $Q$ to be constant.
\end{lem}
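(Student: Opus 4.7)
The map $\TF_{P,Q}$ preserves every fiber of the projection $\pi_n$, so I would reduce the two set equalities to a fiberwise analysis. On the line $L_a:=\pi_n^{-1}(a,0)$ the map acts by the rational function $\beta^a_{P,Q}(t):=t(1-a_{n-1}P(a,t)^2/Q(a,t))$, which by Remark \ref{veryeasy}(i) has odd degree and never-vanishing denominator, so $\beta^a_{P,Q}(\R)=\R$. Writing $\Ii_a=[p_a,q_a]$ (bounded by the weak first trimming hypothesis, possibly empty or degenerate), Lemma \ref{prelonchas1} places $p_a,q_a$ on facets of $\pol$ nonparallel to $\vec{e}_n$; hence $P(a,p_a)=P(a,q_a)=0$ and $\beta^a_{P,Q}$ fixes both endpoints. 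The fiberwise statement I would then verify is
\[
\beta^a_{P,Q}(L_a\setminus\Ii_a)=\begin{cases}L_a\setminus\Ii_a&\text{if }a_{n-1}\leq 0,\\ L_a&\text{if }a_{n-1}>0,\end{cases}
\]
together with the analogue obtained by replacing $\Ii_a$ by $\Int\Ii_a$ for part (ii).

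For $a_{n-1}\leq 0$, I would arrange $\partial_{\x_n}\beta_{P,Q}>0$ on the half-space $\{x_{n-1}\leq 0\}$. In the weak case Lemma \ref{bp}(iii), applied with $Q:=M(1+\x_{n-1}^2)(1+\|\x\|^2)^d$ and $M$ sufficiently large, does this; in the strong case Lemma \ref{aajuste} confines the boundary data $\partial\Ii_a$ for $a\in{\mathfrak A}_\pol$ to a compact box, so Lemma \ref{bp}(ii) allows $Q$ to be a constant $M$. Strict monotonicity together with the two fixed endpoints forces $\beta^a_{P,Q}$ to restrict to bijections $\Ii_a\to\Ii_a$ and $L_a\setminus\Ii_a\to L_a\setminus\Ii_a$, and the same conclusion holds after replacing $\Ii_a$ by $\Int\Ii_a$. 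For $a_{n-1}>0$, I would choose $Q$ so that the leading coefficient of $\beta^a_{P,Q}$ is $-a_{n-1}$ times a positive constant, giving $\beta^a_{P,Q}(\pm\infty)=\mp\infty$. Combined with $\beta^a_{P,Q}(p_a)=p_a$ and $\beta^a_{P,Q}(q_a)=q_a$, the Intermediate Value Theorem on $(-\infty,p_a)$ and $(q_a,+\infty)$ yields $\beta^a_{P,Q}((-\infty,p_a))\supseteq(p_a,+\infty)$ and $\beta^a_{P,Q}((q_a,+\infty))\supseteq(-\infty,q_a)$; using $p_a\leq q_a$, the union covers the endpoints from the opposite half-line, so $\beta^a_{P,Q}(L_a\setminus\Ii_a)=L_a$. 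Assembling the fiberwise identities over all $a\in\R^{n-1}$ delivers both (i) and (ii).

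The main obstacle I anticipate is producing a single $Q$ that simultaneously secures (a) monotonicity on $\{x_{n-1}\leq 0\}$ and (b) the sign-flipped infinite behavior for $a_{n-1}>0$: requirement (a) presses $\deg_{\x_n}Q$ upward toward $2\deg_{\x_n}P$ (so that $Q$ dominates $(\x_n P^2)'$ over the entire half-space, with the bounding inequality of Lemma \ref{acotacion} as the quantitative tool), whereas (b) calls for $\deg_{\x_n}Q$ strictly below $2\deg_{\x_n}P$ so that $\beta^a_{P,Q}$ is a genuinely higher-degree rational function in $\x_n$. Balancing these competing pressures is the technical core of the argument; in the strong first trimming position the constraint (a) only needs to be checked on a compact region (by Lemma \ref{aajuste}), which is precisely what lets one dispense with the correction factor $(1+\|\x\|^2)^d$ and take $Q$ constant.
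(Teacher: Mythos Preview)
Your fiberwise reduction and the case split on the sign of $a_{n-1}$ match the paper's proof. The substantive gap is in the \emph{strong} case for $a_{n-1}\leq 0$: you write that you would ``arrange $\partial_{\x_n}\beta_{P,Q}>0$ on the half-space $\{x_{n-1}\leq 0\}$'' and then deduce bijections $\Ii_a\to\Ii_a$ and $L_a\setminus\Ii_a\to L_a\setminus\Ii_a$ from global monotonicity. But Lemma~\ref{bp}(ii) only gives $\partial_{\x_n}\beta_{P,M}>0$ on the \emph{compact} set $K\cap\{x_{n-1}\leq 0\}$; with $Q$ constant the polynomial $\beta_{P,M}^a$ has degree $2d{+}1$ and is certainly not monotone on all of $\R$, so your bijection argument fails outside the box. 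The paper does not use global monotonicity here. Instead it exploits the elementary inequality
\[
\beta^a_{P,Q}(t)=t\Bigl(1-\frac{a_{n-1}P(a,t)^2}{Q(a,t)}\Bigr)\ \geq\ t\qquad(t>0,\ a_{n-1}\leq 0),
\]
and its mirror for $t<0$; this already gives $\beta^a\bigl((q_a,\infty)\bigr)=(q_a,\infty)$ whenever $q_a\geq 0$, and similarly on $(-\infty,p_a)$ when $p_a\leq 0$. Monotonicity from Lemma~\ref{bp}(ii) is then invoked only on the short segment between $0$ and the nearer endpoint of $\Ii_a$ in the exceptional situation $0\notin\Ii_a$, which is exactly the case $a\in\mathfrak A_\pol$ covered by the compact box of Lemma~\ref{aajuste}.

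Two smaller points. For $a_{n-1}>0$ your IVT argument leaves out the degenerate case $p_a=q_a$: the union $(p_a,+\infty)\cup(-\infty,q_a)$ then misses the common endpoint, and the paper closes this via Lemma~\ref{bp}(i), which gives $\partial_{\x_n}\beta_{P,Q}=1$ at $(a,p_a)$ and hence local increase of $\beta^a$ there. Finally, the tension you flag between (a) monotonicity on $\{x_{n-1}\leq 0\}$ and (b) negative leading coefficient for $a_{n-1}>0$ is genuine and well spotted. The paper sidesteps (a) via the displayed inequality, so the upward pressure on $\deg_{\x_n}Q$ disappears on that side; and (b) is unproblematic in the strong case since $Q$ is constant. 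In the weak case with $Q=Q_M$ from Lemma~\ref{bp}(iii), however, the asymptotic slope of $\beta^a$ for $a_{n-1}>0$ comes out positive once $M$ is large, so your caution about reconciling the two requirements with a single $Q$ is warranted there as well.
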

\begin{proof}
Denote $\pol':=\pol\cap\{x_{n-1}\leq0\}$ and let ${\mathfrak F}\subset\R^{n-1}$ be the set of those $a\in\R^{n-1}$ such that $\pi_n^{-1}(a,0)\cap\Int(\pol)=\varnothing$. For each $a\in\R^{n-1}$ consider the following sets:
$$
\begin{array}{ll}
\Mm_a:=\{t\in\R:(a,t)\in\R^n\setminus\pol\}\ &\ \Nn_a:=\{t\in\R:(a,t)\in\R^n\setminus\Int(\pol)\}\\
\hspace{1.5mm}\Ss_a:=\{t\in\R:(a,t)\in\R^n\setminus\pol'\}\ &\ \Tt_a:=\{t\in\R:(a,t)\in\R^n\setminus(\Int(\pol)\cap\{x_{n-1}\le 0\})\}.
\end{array}
$$
Observe that $\Mm_a\subset\Nn_a$, $\Ss_a\subset\Tt_a$, and 
$$
\Nn_a=\left\{\begin{array}{cl}
\R&\ \text{ if $a\in{\mathfrak F}$,}\\[4pt]
\ol{\Mm}_a&\ \text{ otherwise.}
\end{array}\right.
$$
$$
\Ss_a=\left\{\begin{array}{ll}
\Mm_a&\ \text{ if $a_{n-1}\leq0$,}\\[4pt]
\R&\ \text{ if $a_{n-1}>0$,}
\end{array}\right.
\qquad\text{and}\quad
\Tt_a=\left\{\begin{array}{ll}
\Nn_a&\ \text{ if $a_{n-1}\leq0$,}\\[4pt]
\R&\ \text{ if $a_{n-1}>0$.}
\end{array}\right.
$$
By \S\ref{lonchas}, we have: 
\begin{itemize}
\item[(1)] $\R^n\setminus\pol=\bigsqcup_{a\in\R^{n-1}}(\{a\}\times\Mm_a)$ and $\R^n\setminus\pol'=\bigsqcup_{a\in\R^{n-1}}(\{a\}\times\Ss_a)$.
\item[(2)] $\R^n\setminus\Int(\pol)=\bigsqcup_{a\in\R^{n-1}}(\{a\}\times\Nn_a)$. 
\item[(3)] $\R^n\setminus(\Int(\pol)\cap\{x_{n-1}\leq0\})=\bigsqcup_{a\in\R^{n-1}}(\{a\}\times\Tt_a)$.
\end{itemize}
 
\begin{substeps}{trim}\label{signder}
At this point, we choose $Q$ as follows. If $\pol$ is in strong first trimming position, ${\mathfrak A}_\pol$ is bounded (see Definition \ref{def5a}) and there is, by Lemma \ref{aajuste}, an $N>0$ such that $A:=\bigcup_{a\in{\mathfrak A}_\pol}\partial\Ii_a\subset{\mathfrak A}_\pol\times[-N,N]$, where $\Ii_a:=\pi_n^{-1}(a,0)\cap\pol$. By Lemma \ref{bp} (ii) applied to the compact set $K:=\ol{\mathfrak A}_\pol\times[-N,N]$, there is $Q:=M>0$ such that the partial derivative $\frac{\partial\beta_{P,M}}{\partial\x_n}$ is strictly positive on $K\cap\{x_{n-1}\leq0\}$. On the other hand, if $\pol$ is in weak first trimming position there exists, by Lemma \ref{bp}, a strictly positive polynomial $Q$ such that $\deg_{\x_n}(Q)=2\deg(P)=2\deg_{\x_n}(P)$ and the partial derivative $\frac{\partial\beta_{P,Q}}{\partial\x_n}$ is strictly positive on the set $\{x_{n-1}\leq0\}$.
\end{substeps}

Since the map $\mathsf{F}_{P,Q}$ preserves the lines $\pi_n^{-1}(a,0)$, to show equalities (i) and (ii) in the statement, it is enough to check that
\begin{equation}\label{recort}
\beta_{P,Q}^a(\Mm_a)=\Ss_a\quad\text{and}\quad\beta_{P,Q}^a(\Nn_a)=\Tt_a\qquad\forall\,a\in\R^{n-1}. 
\end{equation}

To check the identities \eqref{recort} we distinguish several cases attending to the particularities of a fixed $a\in\R^{n-1}$.

\vspace{2mm} 
\noindent{\bf Case 1}: If $\Mm_a=\R$, since $\beta_{P,Q}^a(\R)=\R$ (see Remark \ref{veryeasy}(i)) we have 
$$
\beta_{P,Q}^a(\Nn_a)=\beta_{P,Q}^a(\Mm_a)=\R=\Ss_a=\Tt_a.
$$ 

\vspace{2mm} 
\noindent{\bf Case 2}: If $a_{n-1}\le 0$ and $\Mm_a:={]{-\infty}, p[}\cup{]q,+\infty[}$, where $p\le q$, there are indices $1\leq i,j\leq r$ such that $(a,p)\in\Ff_i$, $(a,q)\in\Ff_j$ (see Lemma \ref{prelonchas1}) and either $\Nn_a={]{-\infty}, p]}\cup{[q,+\infty[}$ or $\Nn_a=\R$. In this last case $\beta_{P,Q}^a(\Nn_a)=\beta_{P,Q}^a(\R)=\R=\Tt_a$. 

Next, we compute the images $\beta^a_{P,Q}(\Mm_a)$ and $\beta^a_{P,Q}(\Nn_a)$ attending to the different possible situations.

\noindent (2.a) If $q\ge 0$, then $\beta_{P,Q}^a(q)=q$,\ $\lim_{t\to+\infty}\beta_{P,Q}^a(t)=+\infty$ and, since $a_{n-1}\leq0$,
$$
\beta_{P,Q}^a(t)=t\Big(1-\frac{a_{n-1}P^2(a,t)}{Q(a,t)}\Big)\ge t>q\quad\forall\,t>q.
$$
Now, by continuity $\beta_{P,Q}^a(]q,+\infty[)={]q,+\infty[}$ and $\beta_{P,Q}^a([q,+\infty[)=[q,+\infty[$. 

\noindent (2.b) Analogously, if $p\le0$ we have 
$$
\beta_{P,Q}^a(p)=p,\quad\lim_{t\to{-\infty}}\beta_{P,Q}^a(t)={-\infty}\quad\text{and}\quad\beta_{P,Q}^a(t)\leq t<p\quad\forall\,t<p.
$$ 
Again, by continuity, $\beta_{P,Q}^a(]{-\infty}, p[)={]{-\infty},p[}$ and $\beta_{P,Q}^a(]{-\infty}, p])={]{-\infty}, p]}$.
 
\noindent (2.c) Next, if $q<0$, it holds that $(a,0)\not\in\Ii_a$, that is, $a\in{\mathfrak A}_\pol$ and by \ref{trim}.\ref{signder} the derivative $\frac{\partial\beta_{P,Q}^a}{\partial\x_n}$ is strictly positive on the interval $[q,0]\subset[-N,N]$. In particular, $\beta_{P,Q}^a$ is an increasing function on the interval $[q,0]$; hence, $\beta_{P,Q}^a(]q,0])={]q,0]}$, because $\beta_{P,Q}^a(q)=q$ and $\beta_{P,Q}^a(0)=0$. Moreover, by (2.a), $\beta_{P,Q}^a(]0,+\infty[)={]0,+\infty[}$ and so $\beta_{P,Q}^a(]q,+\infty[)={]q,+\infty[}$ and $\beta_{P,Q}^a([q,+\infty[)=[q,+\infty[$.

\noindent (2.d) Analogously, if $p>0$, it holds that $(a,0)\not\in\Ii_a$, that is, $a\in{\mathfrak A}_\pol$ and by \ref{trim}.\ref{signder} the derivative $\frac{\partial\beta_{P,Q}^a}{\partial\x_n}$ is strictly positive in the interval $[0,p]\subset[-N,N]$. In particular, $\beta_{P,Q}^a$ is an increasing function on the interval $[0,p]$, and since $\beta_{P,Q}^a(p)=p$ and $\beta_{P,Q}^a(0)=0$, we deduce that $\beta_{P,Q}^a([0,p[)=[0,p[$. Moreover, by (2.b), $\beta_{P,Q}^a(]{-\infty},0[)={]{-\infty},0[}$. Consequently, $\beta_{P,Q}^a(]{-\infty},p[)={]{-\infty},p[}$ and $\beta_{P,Q}^a({]{-\infty},p]})={]{-\infty},p]}$.

\vspace{1mm}
From the previous analysis we infer that $\beta_{P,Q}^a(\Mm_a)={]{-\infty},p[}\cup{]q,+\infty[}=\Mm_a=\Ss_a$ and
$$
\beta_{P,Q}^a(\Nn_a)=
\left\{\begin{array}{cl}
]{-\infty}, p]\cup{[q,+\infty[}=\Tt_a&\text{ if $\Nn_a={]{-\infty}, p]}\cup{[q,+\infty[}$,}\\[4pt]
\R=\Tt_a&\text{ if $\Nn_a=\R$}.
\end{array}\right.
$$

\vspace{2mm} 
\noindent{\bf Case 3}: If $a_{n-1}>0$ and $\Mm_a={]{-\infty}, p[}\cup{]q,+\infty[}$ with $p\le q$, we have, since $\beta_{P,Q}^a$ is a regular function of odd degree greater than or equal to $1$ with negative leading coefficient (since we have excluded by hypothesis the possibility $P(a,\x_n)\equiv 0$), that 
$$
\begin{array}{lll}
\beta_{P,Q}^a(p)=p, &\quad&\displaystyle\lim_{t\to {-\infty}}\beta_{P,Q}^a(t)=+\infty;\\[8pt]
\beta_{P,Q}^a(q)=q\ge p, &\quad&\displaystyle\lim_{t\to +\infty}\beta_{P,Q}^a(t)={-\infty}.
\end{array}
$$
Once more by continuity, $\beta_{P,Q}^a(\Nn_a)=\beta_{P,Q}^a(\Mm_a)=\R$ if $p<q$. For the case $p=q$ a detailed analysis of the derivative of $\beta_{P,Q}^a$ (with respect to $\x_n$) shows that also in this case  $\beta_{P,Q}^a(\Nn_a)=\beta_{P,Q}^a(\Mm_a)=\R$ holds. Indeed, by Lemma \ref{bp}, $\frac{\partial\beta_{P,Q}}{\partial\x_n}(a,p)=1$; hence, $\beta_{P,Q}^a$ is an increasing function on a neighborhood of $p=q$. Thus, there is $\veps>0$ such that $\beta_{P,Q}^a(p-\veps)<\beta_{P,Q}^a(p)<\beta_{P,Q}^a(p+\veps)$, and, taking into account the behavior of the function $\beta_{P,Q}^a(t)$ when $t\rightarrow\pm\infty$, we have
\begin{gather*}
]{-\infty},\beta_{P,Q}^a(p)]\subset \beta_{P,Q}^a([p+\veps,+\infty[)\quad\text{and}\quad{[\beta_{P,Q}^a(p),+\infty[}\subset\beta_{P,Q}^a(]{-\infty},p-\veps]);
\end{gather*}
therefore, $\beta_{P,Q}^a(\Nn_a)=\beta_{P,Q}^a(\Mm_a)=\R=\Ss_a=\Tt_a$, and equalities \eqref{recort} are proved.
\end{proof}

\subsection{Placing a convex polyhedron in first trimming position}

Each bounded convex polyhedron is clearly in strong first trimming position; hence, our problem concentrates on placing unbounded convex polyhedra in first trimming position. In \cite{fu2} we analyze this fact carefully and we show that each $3$-dimensional unbounded convex polyhedron of $\R^3$ can be placed in strong first trimming position; for $n\geq4$ the situation is more delicate and we prove there that there are $n$-dimensional unbounded convex polyhedra that cannot be placed in strong first trimming position. Nevertheless, as we see below it is always possible to place an $n$-dimensional unbounded nondegenerate convex polyhedron in weak first trimming position. Since we are interested in developing a general wining strategy for every dimension, we will focus on the representation of complements of convex polyhedra and their interiors as regular images of $\R^n$.

\begin{lem}[Weak first trimming position for convex polyhedra]\label{colocarcasipmn}
Let $\pol\subset\R^n$ be an $n$-dimensional nondegenerate convex polyhedron and let $\Ff$ be one of its facets. Then  after a change of coordinates, we may assume that $\pol$ is in extreme weak first trimming position with respect to $\Ff$.
\end{lem}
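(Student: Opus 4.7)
The plan is to construct the required change of coordinates in two successive stages.

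\emph{Stage 1.} Since $\Ff$ is a facet of $\pol$, its affine hull $H$ is a supporting hyperplane of $\pol$, and $\pol$ lies in one of the two closed half-spaces determined by $H$. An affine isomorphism of $\R^n$ sending $H$ to $\{x_{n-1}=0\}$ and that half-space to $\{x_{n-1}\leq 0\}$ places $\pol$ in the extreme configuration. Within this framework, any further coordinate change that fixes the hyperplane $\{x_{n-1}=0\}$ setwise and preserves its negative side is still admissible; such a map amounts to choosing a new direction $\vec{u}\in\{v_{n-1}=0\}$ to play the role of $\vec{e}_n$, together with a compatible basis of that hyperplane.

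\emph{Stage 2.} It remains to adjust the coordinates so that every fiber $\Ii_a:=\pi_n^{-1}(a,0)\cap\pol$ becomes bounded. A fiber is unbounded exactly when $\pol$ contains a half-line parallel to $\vec{e}_n$, which happens if and only if $\vec{e}_n\in C$ or $-\vec{e}_n\in C$, where $C\subset\R^n$ denotes the recession cone of $\pol$. Because $\pol\subset\{x_{n-1}\leq 0\}$, one has $C\subset\{v_{n-1}\leq 0\}$, so any $\vec{u}\in\{v_{n-1}=0\}$ lies in $C$ if and only if it lies in $C':=C\cap\{v_{n-1}=0\}$, the recession cone of $\Ff$ regarded as a polyhedron in $\{x_{n-1}=0\}\cong\R^{n-1}$. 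Since $\pol$ is nondegenerate, Lemma~\ref{lem:deg2} yields that $C$ is pointed, and hence so is $C'$. The problem therefore reduces to finding a nonzero $\vec{u}\in\R^{n-1}$ with $\vec{u}\notin C'$ and $-\vec{u}\notin C'$.

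The main obstacle is precisely this existence step. It rests on the general fact that a pointed closed convex cone $C'\subset\R^{n-1}$ cannot satisfy $C'\cup(-C')=\R^{n-1}$: otherwise $C'$ would necessarily be a closed half-space, and its bounding hyperplane would be a line inside $C'$, contradicting pointedness. Having produced such a $\vec{u}$, I would choose the new $\vec{e}_n$ to be a nonzero scalar multiple of $\vec{u}$, complete it to a basis of $\{v_{n-1}=0\}$ to fix the remaining basis vectors $\vec{e}_1,\dots,\vec{e}_{n-2}$, and perform the corresponding affine change of coordinates. This preserves the extreme conditions from Stage 1 and ensures that $\pm\vec{e}_n\notin C$, so all fibers $\Ii_a$ are bounded, placing $\pol$ in extreme weak first trimming position with respect to $\Ff$.
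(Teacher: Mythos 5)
Your proof is correct and follows a genuinely different route from the paper's. The paper exploits nondegeneracy through a vertex: it first places $\pol$ inside the orthant with a vertex at the origin, then changes coordinates so that every sublevel set $\pol\cap\{x_n\leq c\}$ is bounded, observes that the hyperplane $H$ spanned by $\Ff$ cannot be parallel to $\Pi_0:=\{x_n=0\}$ (otherwise $\pol$ would be bounded), picks a nonzero $\vec{v}\in\vec{\Pi}_0\cap\vec{H}$ --- so that every line parallel to $\vec{v}$ meets $\pol$ inside a bounded slice $\pol\cap\{x_n=c\}$ --- and finally sends $\vec{v}\mapsto\vec{e}_n$, $H\mapsto\{x_{n-1}=0\}$, $H^+\mapsto\{x_{n-1}\leq0\}$. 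You instead encode nondegeneracy as pointedness of the recession cone $C$ (via Lemma~\ref{lem:deg2}) and reduce the lemma to producing a nonzero $\vec{u}\in\vec{H}$ with $\pm\vec{u}\notin C$. Your route is shorter, more conceptual, and treats bounded and unbounded $\pol$ uniformly, but it leans on convex-analysis facts the paper never develops: that half-lines contained in a closed convex set correspond exactly to recession directions, and that a closed convex cone $C'$ with $C'\cup(-C')=\R^{n-1}$ must be a closed half-space --- this last assertion deserves its one-line separation argument, or can be replaced by the standard fact that a pointed closed cone admits a linear functional strictly positive off the origin, any nonzero vector of whose kernel then serves as $\vec{u}$. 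The paper's construction is heavier but fully explicit, in keeping with the constructive emphasis of the article.

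One caveat you should make explicit: your ``general fact'' fails in dimension one (the ray $[0,+\infty[$ together with its opposite covers $\R$), so your existence step genuinely requires $n\geq3$ whenever $C'=C\cap\vec{H}\neq\{0\}$, i.e.\ whenever $\Ff$ is unbounded. This is not a defect of your argument relative to the paper: for $n=2$ and $\Ff$ unbounded the statement itself is false (in extreme position the fiber $\Ii_0=\pol\cap\{x_1=0\}$ is $\Ff$ itself, hence unbounded), the paper's own choice of a nonzero $\vec{v}\in\vec{\Pi}_0\cap\vec{H}$ collapses for the same reason when $n=2$, and the lemma is only invoked in the paper for $n\geq3$. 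In fact, for $n=2$ with $\Ff$ bounded your argument still goes through (then $C'=\{0\}$), whereas the paper's written choice of $\vec{v}$ does not, so your proof covers this marginal case more cleanly.
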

\begin{proof}
As usual, we denote by $H$ the hyperplane generated by $\Ff$ and by $H^+$ the half-space of $\R^n$ with boundary $H$ that contains $\pol$.

If $\pol$ is bounded, it is enough to consider a change of coordinates that transforms the hyperplane $H$ onto $\{x_{n-1}=0\}$ and the half-space $H^+$ onto $\{x_{n-1}\leq0\}$. Thus, we assume in what follows that $\pol$ is unbounded. 

Let $\{H_1,\ldots,H_m\}$ be the minimal presentation of $\pol$. Since $\pol$ is nondegenerate, $m\geq n$. After a change of coordinates we may assume that $\bigcap_{i=1}^nH_i=\{{\bf 0}\}$ is a vertex of $\pol$ and $H_i^+=\{x_i\geq0\}$ for $1\leq i\leq n$. Consequently, if $\vec{e}_i:=(0,\ldots,0,1^{(i)},0,\ldots,0)\in\R^n$ denotes the vector whose coordinates are all zero except for the $i$th which is one, we have
$$
\pol\subset\{x_1\geq0,\ldots,x_n\geq0\}=\{{\bf 0}\}+\Big\{\sum_{i=1}^n\lambda_i\vec{e}_i:\ \lambda_i\geq0\Big\}\subset\{x_1+\cdots+x_n>0\}\cup\{{\bf 0}\}.
$$
We apply now a new change of coordinates $g:\R^n\to\R^n$ which transforms the half-space $\{x_1+\cdots+x_n\geq0\}$ onto $\{h:=x_n\geq0\}$ and keeps fixed the vertex ${\bf 0}$; for simplicity, we keep the notations $\pol$ for $g(\pol)$, $H_i$ for $g(H_i)$ and $H_i^+$ for $g(H_i^+)$. Note that $h({\bf0})<h(x)$ for all $x\in\pol\setminus\{{\bf0}\}$. Consider the basis $\bs:=\{\vec{u}_1:=\vec{g}(\vec{e}_1),\ldots,\vec{u}_n:=\vec{g}(\vec{e}_1)\}$ of $\R^n$ and its dual basis $\bs^*:=\{\vec{\ell}_1,\ldots,\vec{\ell}_n\}$; denote $\ell_i:=0+\vec{\ell}_i$. After the change of coordinates $g$, we have $H_i^{+}=\{\ell_i\geq0\}$ for $i=1,\ldots,n$ and 
$$
\pol\subset\Rr:=\{\ell_1\geq0,\ldots,\ell_n\geq0\}=\{{\bf 0}\}+\Big\{\sum_{i=1}^n\lambda_i\vec{u}_i:\ \lambda_i\geq0\Big\}\subset\{h:=x_n>0\}\cup\{{\bf 0}\}.
$$
Let us see now that the intersections $\Tt_c:=\pol\cap\Pi_c^-$ of $\pol$ with half-spaces $\Pi_c^-:=\{x_n\leq c\}$ are bounded. For our purposes it is enough to show that each intersection $\Ss_c:=\Rr\cap\Pi_c^-$ is a bounded subset of $\R^n$. If $c<0$, the intersection is empty and so we may assume that $c\geq0$. Write $\vec{u}_k:=(u_{1k},\ldots,u_{nk})$ for $k=1,\ldots,n$ and observe that $0=h({\bf0})<h({\bf0}+\vec{u}_k)=u_{nk}$. If $y:=(y_1,\ldots,y_n)\in \Ss_c$, there are $\lambda_1,\ldots,\lambda_n\geq0$ such that $y={\bf 0}+\lambda_1\vec{u}_1+\cdots+\lambda_n\vec{u}_n$. Then  since $y\in\Pi_c^-$,
$$
y_n=\lambda_1u_{n1}+\cdots+\lambda_nu_{nn}\leq c\ \Longrightarrow\ 0\leq\lambda_k\leq\max\Big\{\frac{c}{u_{nk}}:\ k=1,\ldots,n\Big\}=:M_c
$$ 
for $k=1,\ldots,n$; hence,
$$
\|y\|\leq\lambda_1\|u_1\|+\cdots+\lambda_n\|u_n\|\leq M_c(\|u_1\|+\cdots+\|u_n\|),
$$
and so $\Tt_c\subset \Ss_c$ is bounded.

Observe now that the hyperplane $H$ that contains $\Ff$ is not parallel to $\{x_n=0\}$ because otherwise $H^+:=\{x_n\leq c\}$ (because ${\bf0}\in\pol\subset H^+$) and $\pol\subset \Ss_c$ would be bounded, a contradiction. Thus, we choose a nonzero vector $\vec{v}\in\vec{\Pi}_0\cap\vec{H}$. Since the intersections $\p_c:=\pol\cap\{x_n=c\}\subset \Ss_c$ are bounded, the intersections of the parallel lines to the vector $\vec{v}$ with $\pol$ are also bounded. Next, we perform an additional change of coordinates that transforms: (1) $\vec{v}$ in $\vec{e}_n$, (2) $H$ in $\{x_{n-1}=0\}$, and (3) $H^+$ in $\{x_{n-1}\leq0\}$. 

In the actual situation, for each $a\in\R^{n-1}$ the fiber $\Ii_a:=\pi_n^{-1}(a,0)\cap\pol$ is bounded, $\Ff=\pol\cap\{x_{n-1}=0\}$ and $\pol\subset\{x_{n-1}\leq0\}$; hence, $\pol$ is in extreme weak first trimming position with respect to $\Ff$.
\end{proof}

\begin{lem}[Removing facets \text{and} first trimming position]\label{lem:ppr} 
Let $\pol\subset\R^n$ be an $n$-dimen\-sion\-al convex polyhedron and let $\{H_1,\ldots,H_m\}$ be the minimal presentation of $\pol$. Suppose that $\pol$ is in extreme weak first trimming position with respect to $\Ff_1:=\pol\cap H_1$. Then  $\pol_{1,\times}:=\bigcap_{j=2}^mH_j^+$ is in weak first trimming position. Moreover, if $\pol$ is in strong first trimming position so is $\pol_{1,\times}$ and if such position of $\pol$ is optimal so is the one of $\pol_{1,\times}$.
\end{lem}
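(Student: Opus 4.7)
The plan is to exploit the extreme trimming hypothesis, which says $H_1=\{x_{n-1}=0\}$ and $\pol\subset\{x_{n-1}\leq0\}$. Taking $h_1:=-\x_{n-1}$ gives $H_1^+=\{x_{n-1}\leq0\}$, and consequently
\[
\pol=\pol_{1,\times}\cap\{x_{n-1}\leq0\}.
\]
The key observation is that $H_1$ is parallel to $\vec e_n$, so removing the constraint $h_1\geq0$ from the defining system should not introduce new recession directions along the fibration of $\pi_n$, whose fibers are themselves parallel to $\vec e_n$.

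First I would show that $\Ii_a^{\times}:=\pi_n^{-1}(a,0)\cap\pol_{1,\times}$ is bounded for every $a\in\R^{n-1}$, splitting into two cases. If $a_{n-1}\leq0$, the whole line $\pi_n^{-1}(a,0)$ lies in $\{x_{n-1}\leq0\}$, so
\[
\Ii_a^{\times}=\pi_n^{-1}(a,0)\cap\pol_{1,\times}\cap\{x_{n-1}\leq0\}=\pi_n^{-1}(a,0)\cap\pol=\Ii_a,
\]
which is bounded by the hypothesis on $\pol$. If $a_{n-1}>0$, then $\Ii_a^{\times}$ is a closed convex subset of a line; if it were nonempty and unbounded, it would contain a half-line whose direction $\vec v$ is $\vec e_n$ or $-\vec e_n$, meaning $\vec v$ lies in the recession cone of $\pol_{1,\times}$.

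To contradict this, fix any $q\in\pol$ (nonempty because $\dim\pol=n$). Since $\pol\subset\pol_{1,\times}$, the half-line $q+s\vec v$, $s\geq0$, is contained in $\pol_{1,\times}$; and since $\vec v=\pm\vec e_n$ has zero $(n-1)$-th coordinate, this half-line stays in $\{x_{n-1}\leq0\}$. Hence it lies in $\pol_{1,\times}\cap\{x_{n-1}\leq0\}=\pol$, producing an unbounded fiber of $\pol$ through $q$, which contradicts the fact that $\pol$ is in weak first trimming position. This case is the main obstacle, and it is resolved precisely because the removed hyperplane $H_1$ is parallel to $\vec e_n$, so the constraint $h_1\geq0$ was never active on recession vectors of the form $\pm\vec e_n$.

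The statements about strong and optimal positions now come for free. Indeed, the definition of ${\mathfrak A}$ involves only indices $a\in\R^{n-1}$ with $a_{n-1}\leq0$, and for such $a$ we showed $\Ii_a^{\times}=\Ii_a$; therefore
\[
{\mathfrak A}_{\pol_{1,\times}}={\mathfrak A}_{\pol},
\]
from which the boundedness (respectively emptiness) of ${\mathfrak A}_{\pol}$ transfers verbatim to ${\mathfrak A}_{\pol_{1,\times}}$.
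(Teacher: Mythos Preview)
Your proof is correct and follows essentially the same route as the paper's: both split on the sign of $a_{n-1}$, handle $a_{n-1}\leq0$ by the identity $\Ii_a^{\times}=\Ii_a$, and in the case $a_{n-1}>0$ derive a contradiction by translating an unbounded half-line of $\pol_{1,\times}$ back into $\pol$. The only difference is that you invoke the recession-cone property of closed convex sets (a half-line direction at one point is a half-line direction at every point) as a known fact, whereas the paper proves this translation step explicitly via a sequence-and-limit argument; your derivation of ${\mathfrak A}_{\pol_{1,\times}}={\mathfrak A}_{\pol}$ is identical to the paper's.
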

\begin{proof} 
Suppose for a while that we have already proved that $\pol_{1,\times}$ is in weak first trimming position. Note that $\pol=\pol_{1,\times}\cap\{x_{n-1}\leq0\}$ because $H_1^+:=\{x_{n-1}\leq0\}$. Therefore, ${\mathfrak A}_\pol={\mathfrak A}_{\pol_{1,\times}}$ (see Definition \ref{def5a}); hence ${\mathfrak A}_\pol$ is either bounded or empty if and only if so is ${\mathfrak A}_{\pol_{1,\times}}$. Consequently, if $\pol$ is in strong first trimming position so is $\pol_{1,\times}$ and if such position of $\pol$ is optimal so is the one of $\pol_{1,\times}$.

Thus, it only remains to prove that the convex polyhedron $\pol_{1,\times}$ is in weak first trimming position, that is, for each $a:=(a_1,\ldots,a_{n-1})\in\R^{n-1}$ the intersection $\Ii_a':=\pol_{1,\times}\cap\pi_n^{-1}(a,0)$ is bounded. Observe first that if $a_{n-1}\leq0$, then 
$$
\Ii_a'=\pol_{1,\times}\cap\pi_n^{-1}(a,0)=\pol\cap\pi_n^{-1}(a,0),
$$
which is bounded because $\pol$ is in weak first trimming position. Suppose now that $a_{n-1}>0$. If $\Ii_a'=\pol_{1,\times}\cap\pi_n^{-1}(a,0)$ is unbounded, it is a half-line, say $\pol_{1,\times}\cap\pi_n^{-1}(a,0):=q\vec{v}$ where $\vec{v}:=\pm\vec{e}_n$. Let $p:=(p_1,\ldots,p_n)\in\pol\subset\pol_{1,\times}$; and let us check that the half-line $p\vec{v}$ is contained in $\pol_{1,\times}$.

Indeed, we may assume that $p\not\in q\vec{v}\cup q(-\vec{v})$ because otherwise, since $\pol_{1,\times}$ is convex, it is clear that $p\vec{v}\subset\pol_{1,\times}$.

Let $x\in p\vec{v}\setminus\{p\}$ and let $\mu>0$ be such that $x-p=\mu\vec{v}$. Choose a sequence $\{y_k\}_{k\in\N}\subset q\vec{v}\setminus\{q\}$ such that the sequence $\{\|y_k-q\|\}_{n\in\N}$ tends to $+\infty$. For each $k\in\N$ let $\rho_k>0$ be such that $y_k-q=\rho_k\vec{v}=\lambda_k(x-p)$, where $\lambda_k:=\rho_k/\mu>0$. Note that the sequence $\{\lambda_k=\|y_k-q\|/\|x-p\|\}_{k\in\N}$ also tends to $+\infty$. Since $p,y_k\in\pol_{1,\times}$ and this polyhedron is convex, each segment $\ol{py_k}$ is contained in $\pol_{1,\times}$. Now, since $y_k=q+\lambda_k(x-p)$, we have
$$
z_k:=\Big(\frac{1}{1+\lambda_k}\Big)q+\Big(\frac{\lambda_k}{1+\lambda_k}\Big)x=\Big(\frac{\lambda_k}{1+\lambda_k}\Big)p+\Big(\frac{1}{1+\lambda_k}\Big)y_k\in\ol{py_k}\subset\pol_{1,\times}.
$$
Thus, since $\pol_{1,\times}$ is closed, we deduce that
$$
x=\lim_{k\to\infty}\Big\{\Big(\frac{\lambda_k}{1+\lambda_k}\Big)x+\Big(\frac{1}{1+\lambda_k}\Big)q\Big\}_{k\in\N}=\lim_{k\to\infty}\{z_k\}_{k\in\N}\in\pol_{1,\times},
$$
and the half-line $ p\vec{v}$ is contained in $\pol_{1,\times}$. Moreover, since $p_{n-1}\leq0$, we get $p\vec{v}\subset\{x_{n-1}\leq0\}$, that is, $p\vec{v}\subset\pol_{1,\times}\cap\{x_{n-1}\leq0\}=\pol$. If we take now $a:=(p_1,\dots,p_{n-1})$, we deduce
$$
p\vec{v}\subset\Ii_a=\pol\cap\pi_n^{-1}(a,0);
$$
but the latter is a bounded set, because $\pol$ is in weak first trimming position, a contradiction. Thus, $\Ii_a'$ is bounded for each $a\in\R^{n-1}$ and so $\pol_{1,\times}$ is also in weak first trimming position, as wanted. 
\end{proof}

\section{Complement of a convex polyhedron}\label{s5}

The purpose of this section is to prove the part of Theorem \ref{mainext} concerning complements of convex polyhedra. More precisely, we prove the following:
\begin{thm}\label{redexta}
Let $n\geq2$ and let $\pol\subset\R^n$ be an $n$-dimensional convex polyhedron which is not a layer. We have: 
\begin{itemize}
\item[(i)] If $\pol$ is bounded, then $\R^n\setminus\pol$ is a polynomial image of $\R^n$. 
\item[(ii)] If $\pol$ is unbounded, then $\R^n\setminus\pol$ is a regular image of $\R^n$.
\end{itemize}
\end{thm}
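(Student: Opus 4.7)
My plan is a double induction: an outer induction on the dimension $n$ (starting from $n=2$, for which the relevant representations of complements of convex polygons are already available in the cited literature) and, for fixed $n$, an inner induction on the number $m$ of facets in the minimal presentation of $\pol$. The driving engine is the trimming map $\TF_{P,Q}$ of Definition \ref{1tm}: placing $\pol$ suitably and passing to the larger polyhedron $\pol_{1,\times}$ (with at most $m-1$ facets) lets Lemma \ref{trim} do the coordinate-by-coordinate carving, so that every inductive step consists of composing the trimming map with the map produced by the inductive hypothesis.

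First I dispose of the degenerate case. By Lemma \ref{lem:deg2}, if $\pol$ is degenerate (and is neither $\R^n$ nor a hyperplane, the latter being excluded because $\dim\pol=n$), then after a change of coordinates $\pol=\p\times\R^{n-k}$ with $\p\subset\R^k$ nondegenerate of dimension $k$, $1\leq k\leq n-1$; in particular $\pol$ is then unbounded. For $k\geq 2$, the polyhedron $\p$ is neither a layer nor a hyperplane in $\R^k$ (otherwise $\pol$ would be), so the outer inductive hypothesis furnishes a regular map $g:\R^k\to\R^k$ with $g(\R^k)=\R^k\setminus\p$, and then $(g,\id_{\R^{n-k}}):\R^n\to\R^n$ has image $\R^n\setminus\pol$. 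The remaining case $k=1$ forces $\pol$ to be an affine half-space, which I treat by an explicit regular map $\R^n\to\R^n$ with image $\R^{n-1}\times(-\infty,0)$, exploiting the extra variables available because $n\geq 2$.

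For nondegenerate $\pol$, hence $m\geq n$, I pick a facet $\Ff_1$ and use Lemma \ref{colocarcasipmn} to place $\pol$ by an affine change of coordinates in extreme weak first trimming position with respect to $\Ff_1$, so that $\Ff_1\subset\{x_{n-1}=0\}$ and $\pol\subset\{x_{n-1}\leq 0\}$. Let $\Ff_{i_1},\ldots,\Ff_{i_r}$ be the facets of $\pol_{1,\times}$ contained in hyperplanes nonparallel to $\vec{e}_n$, with linear equations $h_{i_1},\ldots,h_{i_r}$, and put $P:=h_{i_1}\cdots h_{i_r}$; by Remark \ref{veryeasy}(iv), $\deg P=\deg_{\x_n}P=r$ and $P$ vanishes identically on each such facet. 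Lemma \ref{lem:ppr} shows that $\pol_{1,\times}$ inherits weak first trimming position, and Lemma \ref{trim} then supplies a strictly positive $Q\in\R[\x]$ so that
$$
\TF_{P,Q}(\R^n\setminus\pol_{1,\times})=\R^n\setminus(\pol_{1,\times}\cap\{x_{n-1}\leq 0\})=\R^n\setminus\pol.
$$
By the inner inductive hypothesis there is $f:\R^n\to\R^n$ with $f(\R^n)=\R^n\setminus\pol_{1,\times}$, hence $\TF_{P,Q}\circ f$ has image $\R^n\setminus\pol$. For case (i), $\pol$ bounded, one has ${\mathfrak A}_\pol=\varnothing$ and Lemma \ref{lem:ppr} propagates the optimal strong first trimming position to $\pol_{1,\times}$; the final assertion of Lemma \ref{trim} then lets me take $Q$ constant, so $\TF_{P,Q}$ is polynomial and the polynomial character is preserved along the induction.

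The main obstacle is controlling the inner induction near its endpoints: when the removal of $H_1$ produces a $\pol_{1,\times}$ that is $\R^n$, a hyperplane, or a layer, the inductive hypothesis no longer applies and I must either fall back on the explicit half-space construction from the degenerate step or choose a different facet $\Ff_1$ whose removal avoids these forbidden shapes. Verifying that such a facet always exists when $m$ is large enough (using the nondegeneracy of $\pol$ and $m\geq n$) is the combinatorial heart of the argument. A second delicate point is that for case (i) the \emph{strong} first trimming position has to be recovered after each inductive step, including the affine change of coordinates needed to place $\pol_{1,\times}$ anew at the next stage; this is where boundedness of $\pol$ and the preservation statement in Lemma \ref{lem:ppr} become essential to keep $\TF_{P,Q}$ polynomial throughout.
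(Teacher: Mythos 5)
Your treatment of the unbounded case (ii) is essentially the paper's argument (extreme weak first trimming position via Lemma \ref{colocarcasipmn}, passage to $\pol_{1,\times}$ via Lemma \ref{lem:ppr}, one application of Lemma \ref{trim}, and the inner induction), except that where the paper stops the inner induction at $m=n$ and invokes the orthant Lemma \ref{lem2}, you push below $m=n$ into degenerate polyhedra and reduce the dimension; that variant can be made to work, but the ``combinatorial heart'' you defer is genuinely needed (one must check that $\pol_{1,\times}$ is never $\R^n$, a hyperplane or a layer when $\pol$ is nondegenerate and $n\geq3$ --- e.g.\ if $\pol_{1,\times}$ were a layer then $\pol=\pol_{1,\times}\cap H_1^+$ would contain a line, contradicting nondegeneracy), and it is an easy argument rather than an obstacle requiring a clever choice of facet.

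The genuine gap is in case (i). You run the same facet-removal induction for bounded $\pol$ and claim that polynomiality ``is preserved along the induction''. It is not: as soon as you remove a facet, $\pol_{1,\times}$ is in general \emph{unbounded} (already for a simplex), so the inner inductive hypothesis only supplies a \emph{regular} map onto $\R^n\setminus\pol_{1,\times}$, and to keep every step polynomial you would have to maintain strong first trimming position for all the unbounded intermediate polyhedra after each new change of coordinates; Lemma \ref{colocarcasipmn} only guarantees the \emph{weak} position, Lemma \ref{lem:ppr} transfers the strong position only in the coordinates where the removed facet is extreme, and the paper points out (citing \cite{fu2}) that for $n\geq4$ there are unbounded polyhedra that cannot be placed in strong first trimming position at all. (Also, your assertion that boundedness forces ${\mathfrak A}_\pol=\varnothing$ is false; boundedness only gives ${\mathfrak A}_\pol$ bounded, i.e.\ the strong, not the optimal, position.) The paper avoids this by proving (i) in the opposite direction: it encloses $\pol$ in a simplex $\Delta$ with $\pol\subset\Int\Delta$, represents $\R^n\setminus\Delta$ (via Lemma \ref{lem2} plus one trimming), and then intersects successively with the half-spaces $H_j^+$ of the minimal presentation, using Lemma \ref{trim} at each step; every intermediate polyhedron $\pol_j=\Delta\cap\bigcap_{i\le j}H_i^+$ is bounded, hence automatically in strong first trimming position, so each trimming map can be taken with $Q$ constant and the whole composition is polynomial. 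You need this (or some equally effective) downward construction for the bounded case; the facet-removal induction alone does not deliver a polynomial map.
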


The proof of the previous results runs by induction on the number of facets of $\pol$ in case (i) and by double induction on the number of facets and the dimension of $\pol$ in case (ii). The first step of the induction process in the second case concerns dimension $2$, which has being already approached by the second author in \cite{u2} with similar but simpler techniques. We approach first the following particular case.

\begin{lem}[The orthant]\label{lem2} 
Let $\pol\subset\R^n$ be an $n$-dimensional nondegenerate convex polyhedron with $n\geq2$ facets. Then  $\R^n\setminus \pol$ is a polynomial image of $\R^n$. 
\end{lem}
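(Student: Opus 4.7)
My plan is to reduce to the standard orthant and then construct an explicit polynomial surjection onto its complement, most naturally by induction on $n$.

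\textbf{Reduction to the orthant.} Since $\pol$ is nondegenerate of dimension $n$ with exactly $n$ facets, its minimal presentation $\{H_1,\ldots,H_n\}$ consists of $n$ hyperplanes meeting in a single point. Indeed, if $\bigcap_{i=1}^n H_i$ contained a line $\ell$, then because $\vec\ell\subset\vec H_i$ for every $i$ each half-space $H_i^+$ would be invariant under translation by $\vec\ell$, forcing $\pol=\bigcap H_i^+$ to contain a line and contradicting Lemma \ref{lem:deg2}. After an affine change of coordinates moving this common vertex to the origin and sending each $H_i$ to a coordinate hyperplane (with orientations so that $\pol\subset H_i^+$), we may assume $\pol=\{x_1\ge 0,\ldots,x_n\ge 0\}$; the task then is to realize $\R^n\setminus\pol=\{x\in\R^n:\min_i x_i<0\}$ as a polynomial image of $\R^n$.

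\textbf{Induction and a first map $F_1$.} I would induct on $n\ge 2$, taking as the base case $n=2$ the explicit polynomial construction from \cite{u2}. For the inductive step, invoke the inductive hypothesis to obtain a polynomial map $g:\R^{n-1}\to\R^{n-1}$ with $g(\R^{n-1})=\R^{n-1}\setminus\pol_{n-1}$, where $\pol_{n-1}:=\{x_1\ge 0,\ldots,x_{n-1}\ge 0\}$, and consider
$$F_1:\R^n\to\R^n,\qquad (u_1,\ldots,u_n)\mapsto\bigl(g(u_1,\ldots,u_{n-1}),u_n\bigr),$$
whose image is $(\R^{n-1}\setminus\pol_{n-1})\times\R$. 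This already covers every point of $\R^n\setminus\pol$ except the wedge $W:=\pol_{n-1}\times(-\infty,0)$, i.e.\ those points where all of $x_1,\ldots,x_{n-1}$ are nonnegative but $x_n<0$.

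\textbf{The second map $F_2$ and the main obstacle.} The remaining task is to construct a polynomial map $F_2:\R^n\to\R^n$ such that $F_2\bigl((\R^{n-1}\setminus\pol_{n-1})\times\R\bigr)=\R^n\setminus\pol$; the composition $F_2\circ F_1$ is then the desired surjection. The natural idea is to design $F_2$ in the spirit of the trimming maps of Section \ref{s4}: multiply each of the first $n-1$ coordinates by a polynomial factor of the form $1+x_n P_i(x)^2$ (or a similar expression), so that for $x_n$ sufficiently negative the factor becomes negative and flips the sign of $x_i$, sending a source point with some $x_i<0$ into the wedge $W$, while for other source configurations the factors stay positive and $F_2$ essentially preserves the sign pattern. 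The hard part will be the explicit choice of the $P_i$ and the case-by-case verification, stratified by the sign pattern of the source coordinates and mirroring the analysis in Cases~1--3 of the proof of Lemma \ref{trim}, that (a) no point of $(\R^{n-1}\setminus\pol_{n-1})\times\R$ is mapped into $\pol$, and (b) every point of $W$ is attained. The boundary subtlety---that $W$ is open in the $x_n<0$ direction while the constraints $x_i\ge 0$ on the first $n-1$ coordinates are closed---is precisely what prevents a naive linear fold and forces the polynomial factors to have at least quadratic behavior in the last variable.
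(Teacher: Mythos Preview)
Your reduction to the standard orthant and the construction of $F_1$ match the paper exactly, as does the inductive scheme with base case $n=2$ from \cite{u2}. The divergence, and the gap, is in the second map $F_2$.

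You propose to build $F_2$ from scratch by multiplying each of the first $n-1$ coordinates by a factor of the shape $1+x_nP_i(x)^2$, but you leave both the choice of the $P_i$ and the verification of surjectivity onto $\R^n\setminus\pol$ unresolved, explicitly flagging them as ``the hard part''. This is a genuine gap: modifying $n-1$ coordinates simultaneously while controlling the image precisely is delicate, and the open/closed boundary issue you identify for the wedge $W$ does not obviously resolve under your sketch.

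The paper avoids this entirely. Instead of constructing $F_2$ by hand, it observes that $(\R^{n-1}\setminus\ol{\Qq}_{n-1})\times\R=\R^n\setminus(\ol{\Qq}_{n-1}\times\R)$ and applies an affine change of coordinates $g_1$ taking $\ol{\Qq}_{n-1}\times\R$ to the polyhedron
\[
\pol_1=\{x_1\ge0,\dots,x_{n-3}\ge0,\ x_{n-2}+x_n\ge 0,\ x_{n-2}-x_n\ge0\},
\]
which is checked to be in \emph{optimal strong first trimming position}. Then Lemma~\ref{trim}, already proved, supplies a polynomial map $h$ with $h(\R^n\setminus\pol_1)=\R^n\setminus(\pol_1\cap\{x_{n-1}\le0\})$, and a final affine change $g_2$ identifies $\pol_1\cap\{x_{n-1}\le0\}$ with $\ol{\Qq}_n$. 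The point is that the trimming map of Lemma~\ref{trim} modifies only the single coordinate $x_n$, and all the sign/boundary analysis you anticipated is already packaged there; what you were missing is the affine move that makes Lemma~\ref{trim} directly applicable.
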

\begin{proof} 
After a change of coordinates it is enough to prove by induction on $n$, that $\pp(\R^n\setminus\ol{\Qq}_n)=n$ for each $n\geq 2$, where 
$$
\ol{\Qq}_n=\{x_1\ge0,\dots,x_n\ge 0\}\subset\R^n.
$$ 
The case $n=2$ is proved in \cite[Lem. 3.11]{u2}. Suppose that the statement is true for $n-1$ (with $n\geq3$), and we will see that it also holds for $n$. By induction hypothesis, there is a polynomial map $f:\R^{n-1}\to\R^{n-1}$ such that $f(\R^{n-1})=\R^{n-1}\setminus\ol{\Qq}_{n-1}$. Define 
$$
\widehat{f}:\R^n\to\R^n,\, (x_1,\dots,x_n)\mapsto(f(x_1,\dots,x_{n-1}),x_n),
$$ 
that satisfies the equality 
$$
\widehat{f}(\R^n)=f(\R^{n-1})\times\R=(\R^{n-1}\setminus\ol{\Qq}_{n-1})\times\R.
$$ 
Let $g_1:\R^n\to\R^n$ be a change of coordinates such that $g_1((\R^{n-1}\setminus\ol{\Qq}_{n-1})\times\R)=\R^n\setminus\pol_1$, where
$$
\pol_1=\{x_1\ge0,\dots,x_{n-3}\ge0,\ x_{n-2}+x_n\ge 0,x_{n-2}-x_n\ge0\}\subset\R^n
$$
is a convex polyhedron in optimal strong first trimming position. 

Indeed, let $a=(a_1,\ldots,a_{n-1})\in\R^{n-1}$ be such that the line $\ell_a:=\{(a_1,\ldots,a_{n-1},t):\ t\in\R\}$ intersects $\pol_1$; then $a_1,\ldots,a_{n-3},a_{n-2}\geq0$ (because $a_{n-2}+t\geq0$ and $a_{n-2}-t\geq0$) and $-a_{n-2}\leq t\leq a_{n-2}$. Thus, $\Ii_a$ is either the empty set or the bounded interval
$$
\Ii_a:=\pol\cap\ell_a=\{a\}\times[-a_{n-2},a_{n-2}]\quad(a_{n-2}\ge 0),
$$
which contains $(a,0)$; hence, $\pol_1$ is in optimal strong first trimming position.

By Proposition \ref{trim}, there is a polynomial map $h:\R^n\to\R^n$ such that $h(\R^n\setminus\pol_1)=\R^n\setminus(\pol_1\cap\{x_{n-1}\le 0\})$. Moreover, there is a change of coordinates $g_2:\R^n\to\R^n$ such that 
$$
g_2(\R^n\setminus(\pol_1\cap\{x_{n-1}\le 0\}))=\R^n\setminus\ol{\Qq}_n.
$$
Thus, the composition $g:=g_2\circ h\circ g_1\circ\widehat{f}:\R^n\to\R^n$ is a polynomial map satisfying $g(\R^n)=\R^n\setminus\ol{\Qq}_n$.
\end{proof}

\begin{proof}[Proof of Theorem \em\ref{redexta}(i)]
Consider first the case when $\pol:=\Delta$ is an $n$-simplex in $\R^n$. We may assume that we are working with the $n$-simplex $\Delta\subset\R^n$, which is in strong first trimming position because it is bounded, whose vertices are the points
$$
v_0:=(0,\dots,0),\ v_i:=(0,\dots,0,1^{(i)},0,\dots,0),\ \text{for}\ i\neq n-1\ \text{and}\ v_{n-1}:=(0,\dots,0,-1,0).
$$
Write $\Delta:=\bigcap_{i=1}^{n+1}{H_{i}'}^+$ where $\{H_1',\dots,H_{n+1}'\}$ is the minimal presentation of $\Delta$ and order the hyperplanes in such a way that $H_1':=\{x_{n-1}=0\}$; hence, $\Delta_{1,\times}:=\bigcap_{i=2}^{n+1}H_i^+$ is an $n$-dimensional nondegenerate convex polyhedron with $n$ facets. By Lemma \ref{lem2} there is a polynomial map $g:\R^n\to\R^n$ such that $g(\R^n)=\R^n\setminus\Delta_{1,\times}$. Since $\Delta$ is in extreme strong first trimming position with respect to the facet $\Delta\cap\{x_{n-1}=0\}$, we find by means of Lemmata \ref{trim} and \ref{lem:ppr} a polynomial map $h:\R^n\to\R^n$ such that $h(\R^n\setminus \Delta_{1,\times})=\R^n\setminus\Delta$; hence, the polynomial map $f:=h\circ g$ satisfies $f(\R^n)=\R^n\setminus\Delta$.

Let now $\pol$ be an $n$-dimensional bounded convex polyhedron of $\R^n$. We choose an $n$-simplex $\Delta\subset\R^n$ such that $\pol\subset\Int\Delta$. We write $\pol:=\bigcap_{i=1}^mH_i^+$ where $\{H_1,\dots,H_m\}$ is the minimal presentation of $\pol$, and consider the convex polyhedra $\{\pol_0,\dots,\pol_m\}$ defined by:
$$
\pol_0:=\Delta,\quad \pol_j:=\pol_{j-1}\cap H_{j}^+\ \text{for}\ 1\le j\le m.
$$
Thus, $\pol_{m}=\pol$ and for each $1\leq j\leq m$ the convex polyhedron $\pol_{j}=\Delta\cap(\bigcap_{i=1}^jH_i^+)$ has a facet $\Ff_{j}$ contained in the hyperplane $H_j$. Moreover, the set $\R^n\setminus\Delta$ is, as we have already seen above, the image of a polynomial map $f^{(0)}:\R^n\to\R^n$. 

For each index $1\leq j\leq m$ there is a change of coordinates $h^{(j)}:\R^n\to\R^n$ such that $h^{(j)}(H_j^+)=\{x_{n-1}\leq0\}$; let
$$
\pol_j':=h^{(j)}(\pol_j)=h^{(j)}(\pol_{j-1})\cap h^{(j)}(H_{j}^+)=h^{(j)}(\pol_{j-1})\cap\{x_{n-1}\leq0\}.
$$
By Lemma \ref{trim}, there is a polynomial map $g^{(j)}:\R^n\to\R^n$ such that
$$
g^{(j)}(\R^n\setminus h^{(j)}(\pol_{j-1}))=\R^n\setminus(h^{(j)}(\pol_{j-1})\cap \{x_{n-1}\le 0\})=\R^n\setminus\pol'_{j};
$$
hence, the polynomial map $f^{(j)}:=(h^{(j)})^{-1}\circ g^{(j)}\circ h^{(j)}$ satisfies $f^{(j)}(\R^n\setminus \pol_{j-1})=\R^n\setminus \pol_{j}$. 

Finally, $f:=f^{(m)}\circ \cdots \circ f^{(0)}:\R^n\to\R^n$ is a polynomial map which satisfies $f(\R^n)=\R^n\setminus\pol_{m}=\R^n\setminus\pol$, as wanted.
\end{proof}

\begin{proof}[Proof of Theorem \em\ref{redexta}(ii)]
We proceed by induction on the pair $(n,m)$ where $n$ denotes the dimension of $\pol$ and $m$ denotes the number of facets of $\pol$. As we have already commented the case $n=2$ has already been already solved in \cite[Thm.1]{u2} (using polynomial maps) and we do not include further details. Suppose in what follows that $n\geq3$ and that the result holds for any $k$-dimensional convex polyhedron of $\R^k$ which is not a layer, if $2\leq k\leq n-1$. We distinguish two cases:

\noindent{\bf Case 1.} \em $\pol$ is a degenerate convex polyhedron\em. By Lemma~\ref{lem:deg2}, we may assume that $\pol=\p\times\R^{n-k}$ where $1\le k<n$ and $\p\subset\R^k$ is a nondegenerate convex polyhedron. If $k=1$ then $\pol$ is either a layer, which disconnects $\R^n$ and cannot be a regular image of $\R^n$, or a half-space, whose complement can be expressed as a polynomial image of $\R^n$ (see \cite[Thm.1.6]{fg1}). We now assume $k>1$; by induction hypothesis $\R^k\setminus\p$ is the image of $\R^k$ by a regular map $f:\R^k\to\R^k$; hence, $\R^n\setminus\pol$ is the image of the regular map $(f,\id_{\R^{n-k}}):\R^k\times\R^{n-k}\to\R^k\times\R^{n-k}$.

\vspace{1mm}
\noindent{\bf Case 2.} \em $\pol$ is a nondegenerate convex polyhedron\em. If $\pol$ has $n$ facets (and so just one vertex), it is enough to apply Lemma \ref{lem2}. Thus, suppose that the number $m$ of facets of $\pol$ is strictly greater than $n$ and let $\{H_1,\ldots,H_m\}$ be the minimal presentation of $\pol:=\bigcap_{i=1}^mH_i^+$. 

By Lemma \ref{colocarcasipmn}, we may assume that $\pol$ is in extreme weak first trimming position with respect to $\Ff_1$. By Lemma \ref{lem:ppr}, the convex polyhedron $\pol_{1,\times}=\bigcap_{j=2}^mH_j^+$ is in weak first trimming position. This allows us to apply Proposition \ref{trim} to the convex polyhedron $\pol_{1,\times}$ and to find a regular map $f:\R^n\to\R^n$ such that 
$$
f(\R^n\setminus\pol_{1,\times})=\R^n\setminus(\pol_{1,\times}\cap\{x_{n-1}\le 0\})=\R^n\setminus(\pol_{1,\times}\cap H_1^+)=\R^n\setminus\pol.
$$
Now, $\pol_{1,\times}$ is a convex polyhedron with $m-1\geq n$ facets, and so it is not a layer; hence, by induction hypothesis there is a regular map $g:\R^n\to\R^n$ such that $g(\R^n)=\R^n\setminus\pol_{1,\times}$. We conclude that the polynomial map $h:=f\circ g$ satisfies 
$$
h(\R^n)=f(g(\R^n))=f(\R^n\setminus\pol_{1,\times})=\R^n\setminus\pol,
$$ 
and we are done. 
\end{proof}

\begin{remark}
The previous proof is constructive. Moreover, if the convex polyhedron that appears in a certain step of the inductive process is in extreme strong first trimming position with respect to a facet $\Ff_1$, then by Lemma \ref{trim} the regular map of this step can be chosen polynomial. In view of Definition \ref{1tm}, this reduces the complexity of the final regular map. Moreover, if this can be done in each inductive step, then the final regular map can be assumed to be polynomial.
\end{remark}

\section{Trimming tools of type two}\label{s6}

In this section we develop the main tools that will be used in Section \ref{s7} to prove the part of Theorem \ref{mainext} concerning complements of interiors of convex polyhedra. We will use freely the preliminaries already introduced in \S\ref{lonchas} and Lemma \ref{prelonchas1}. We begin with the definition of second trimming position.

\begin{defines}\label{def5b}
(1) An $n$-dimensional convex polyhedron $\pol\subset\R^n$ is in \em weak second trimming position \em if: 
\begin{itemize}
\item[(i)] For each $a\in\R^{n-1}$ the set $\Ii_a:=\pol\cap\pi_n^{-1}(a,0)$ is upperly bounded (here we endow the line $\pi_n^{-1}(a,0)$ with the orientation induced by the vector $\vec{e}_n$).
\end{itemize}
We say that $\pol$ is in \em strong second trimming position \em if it also satisfies:
\begin{itemize}
\item[(ii)] The set ${\mathfrak B}_\pol:=\{a\in\R^{n-1}:\ \Ii_a\neq\varnothing,\ (a,0)\not\in\Ii_a\}$ is bounded.
\end{itemize}
Moreover, if ${\mathfrak B}_\pol=\varnothing$  we say that the second trimming position of $\pol$ is \em optimal\em.

(2) On the other hand, we say that the (either weak or strong) second trimming position of $\pol\subset\R^n$ is \em extreme with respect to its facet $\Ff$ \em if $\Ff\subset\{x_n=0\}$ and $\pol\subset\{x_n\leq0\}$.
\end{defines}

\begin{figure}[t]
\begin{minipage}[b]{0.26\linewidth}
\begin{center}
\begin{tikzpicture}[line join=round,background rectangle/.style=
{double,draw=gray},
show background rectangle,xscale=0.456, yscale=0.456]
\filldraw[fill=gray!80,fill opacity=0.8, draw=gray,dashed](-3,-4)--(0,-2)--(0,-1)--(-1,0)--(-3,0)--cycle;
\draw[ultra thick,<->] (-3,-4)--(0,-2)--(0,-1)--(-1,0)--(-3,0);
\draw (-3,-2) node[right]{$\EuScript K$};
\draw[<->] (0,-5) -- (0,3) node[right] {$x_2$};
\draw[<->] (3,0) -- (-4,0) node[above] {$x_1$};
\draw[densely dashed,very thin] (-0.5,-2.3)--(0.05,-2.3) node[right]{$p$};
\draw[densely dashed,very thin] (-0.5,-0.5)--(0.05,-0.5) node[right]{$q$};
\path (-0.5,-1.3) node[left]{$\EuScript I_a$};
\draw[very thick, gray] (-0.5,-2.3)--(-0.5,-0.5);
\filldraw[fill=gray,draw=gray] (-0.5,-2.3) circle (0.5mm);
\filldraw[fill=gray,draw=gray] (-0.5,-0.5) circle (0.5mm);
\end{tikzpicture}\\
(a)\vspace{3pt}\\
\begin{tikzpicture}[line join=round,background rectangle/.style=
{double,draw=gray},
show background rectangle,xscale=0.456, yscale=0.456]
\filldraw[fill=gray!80,fill opacity=0.8,draw=gray,dashed](-3,-4)--(0,-2)--(0,0)--(-3,0)--cycle;
\draw[ultra thick,<->] (-3,-4)--(0,-2)--(0,0)--(-3,0);
\draw (-3,-2) node[right]{$\EuScript K$};
\draw[<->] (0,-5) -- (0,3) node[right] {$x_2$};;
\draw[<->] (3,0) -- (-4,0) node[above] {$x_1$};;
\draw[densely dashed,very thin] (-0.5,-2.3)--(0.05,-2.3) node[below right]{$p$};
\draw[densely dashed,very thin] (-0.5,0)--(0.05,0) node[below right]{$q$};

\draw[very thick, gray] (-0.5,-2.3)--(-0.5,0.05);
\filldraw[fill=gray,draw=gray] (-0.5,-2.3) circle (0.5mm);
\filldraw[fill=gray,draw=gray] (-0.5,0) circle (0.5mm);
\path (-0.5,-1.3) node[left]{$\EuScript I_a$};
\end{tikzpicture}\\
(b)
\end{center}
\label{fig:figure2ab}
\end{minipage}
\hspace{0.5cm}
\begin{minipage}[b]{0.55\linewidth}
\begin{center}
\begin{tikzpicture}[line join=round,background rectangle/.style=
{double,draw=gray},
show background rectangle, loose background, xscale=0.645, yscale=0.53]
\filldraw[fill=blue!40,fill opacity=0.4,draw=none](0,3.352)--(3.637,4.041)--(3.637,-3.781)--(0,-4.469)--cycle;
\draw[very thin](0,3.352)--(3.637,4.041)--(3.637,-3.781)--(0,-4.469);
\draw[thick,arrows=<->](3.03,-4.082)--(0,0)--(0,4.469);
\filldraw[fill=green!40,fill opacity=0.4,very thin](-1.364,-2.633)--(-1.364,5.189)--(2.727,-.321)--(2.727,-8.143)--cycle;
\draw[thick,arrows=->](0,0)--(-7.273,-1.378);
\filldraw[fill=blue!40,fill opacity=0.4,draw=none](-6.061,2.204)--(0,3.352)--(0,-4.469)--(-6.061,-5.617)--cycle;
\draw[very thin](0,-4.469)--(-6.061,-5.617)--(-6.061,2.204)--(0,3.352);
\filldraw[draw=black,ultra thick,fill=black!60,fill opacity=0.8](-6.061,-4.128)--(-6.061,-1.148)--(-1.616,-.306)--(0,-1.117)--cycle;
\filldraw[draw=black,ultra thick,fill=black!60,fill opacity=0.8](-6.061,-4.128)--(0,-1.117)--(2.727,-7.026)--(-3.334,-10.036)--cycle;
\filldraw[draw=black,ultra thick,fill=black!60,fill opacity=0.8](2.727,-7.026)--(0,-1.117)--(.455,-.612)--(2.727,-3.673)--cycle;
\filldraw[draw=black,ultra thick,fill=black!60,fill opacity=0.8](0,-1.117)--(-1.616,-.306)--(.455,-.612)--cycle;

\filldraw[draw=black!60,dashed,ultra thin,fill=gray!40,fill opacity=0.3](-3.334,-10.036)--(-6.061,-4.128)--(-6.061,-1.148)--(-3.334,-4.821)--cycle;
\filldraw[draw=black,ultra thick,fill=black!60,fill opacity=0.8](-6.061,-1.148)--(-1.616,-.306)--(.455,-.612)--(2.727,-3.673)--(-3.334,-4.821)--cycle;
\filldraw[draw=black!60,dashed,ultra thin,fill=gray!40,fill opacity=0.3](-3.334,-10.036)--(-3.334,-4.821)--(2.727,-3.673)--(2.727,-7.026)--cycle;
\draw[dashed,ultra thick,draw=white!60](-6.061,-4.128)--(-6.061,-1.148)--(-3.334,-4.821)--(2.727,-3.673)--(2.727,-7.026)--(-3.334,-10.036)--(-6.061,-4.128);
\path (3.03,-4.082) node[below right]{$x_2$} (0,4.469) node[above]{$x_3$} (-7.273,-1.378) node[above left]{$x_1$};
\path (-2.9,5.4) node[right]{$x_1=0$};
\path (-6.061,2.8) node[right]{$x_2=0$};
\path (-3.4,-8.622) node[right]{$\EuScript K$};
\path (-6.061,-2.638) node[left]{$\EuScript F_1$};
\path (-3.2,-4.4) node[above]{$\EuScript F$};
\path (2.727,-5.536) node[right]{$\EuScript F_2$};
\end{tikzpicture}\\
(c)
\end{center}
\label{fig:figure2c}
\end{minipage}

\vspace{-5mm}
\caption{{\small (a) and (b) respectively show convex polygons in nonoptimal and optimal strong second trimming position. (c) shows a convex polyhedron in strong second trimming position.}}\label{fig:2}
\end{figure}
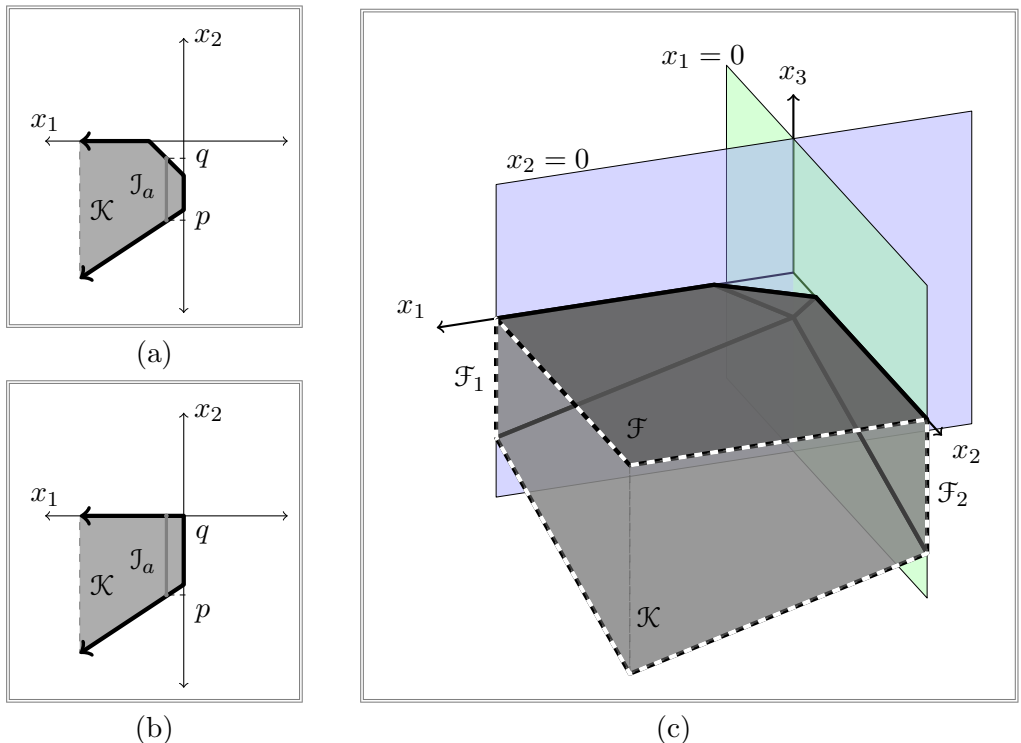

\begin{remarks}\label{bajuste}
(1) Each bounded convex polyhedron is clearly in strong second trimming position; hence, the problem of placing a convex polyhedron in second trimming position concentrates on placing those which are unbounded. In \cite{fu2} we analyze this fact carefully and we show that  each $3$-dimensional polyhedron can be placed in strong second trimming position; for $n\geq4$ the situation is more delicate and we prove that there are $n$-dimensional unbounded convex polyhedra that cannot be placed in strong second trimming position. Nevertheless, notice that condition (2) in Definition \ref{def5b} imply condition (1.i). Thus, all $n$-dimensional convex polyhedron $\pol\subset\R^n$ contained in $\{x_n\leq0\}$ and such that $\Ff:=\pol\cap\{x_n=0\}$ is a facet of $\pol$ is in extreme weak second trimming position with respect to its facet $\Ff$. 

(2) Let $\pol\subset\R^n$ be an $n$-dimensional convex polyhedron in strong second trimming position. Then  there is $N>0$ such that 
$$
B:=\bigcup_{a\in{\mathfrak B}_\pol}\partial\Ii_a\subset{\mathfrak B}_\pol\times[-N,N].
$$

This proof runs analogously to that of Lemma \ref{aajuste} and we do not include it. The fact that for the strong second trimming position each set $\Ii_a$ is just upperly bounded but not necessarily bounded does not change the proof in a relevant way.
\end{remarks}

\begin{define}\label{2tm}
Let $M>0$ and let $P\in\R[\x]$ be a polynomial such that $d:=\deg(P)=\deg_{\x_n}(P)\geq1$. We write shortly ${\rm p}:=$ polynomic and ${\rm r}:=$ regular, and consider the function
$$
\gamma_{P,M}^*(\x):=
\begin{cases}
\x_n(1-\x_n\frac{P^2(\x)}{M})^2&\text{if $\ast={\rm p}$,}\\[4pt]
\x_n\frac{(1-\x_nP^2(\x))^2}{(1-\x_nP^2(\x))^2+\frac{\x_n^2P^2(\x)}{(M(1+\|\x\|^2))^{2d+2}}}&\text{if $\ast={\rm r}$,}
\end{cases}
$$
(which is either polynomial or regular depending on the case) and we define the \emph{trimming map $\TS_{P,M}^*$ of type} II associated to $P$ and $M$ as the either polynomial or regular map (depending on $\ast={\rm p}$ or $\ast={\rm r}$)
$$
\TS_{P,M}^*:\R^n\to\R^n,\ x=(x_1,\dots,x_n)\mapsto(x_1,\dots,x_{n-1},\gamma_{P,M}^*(x)).
$$ 
Observe that in any case $\gamma_{P,M}^*$ is a regular map of degree $\deg(\gamma_{P,M}^*)=\deg_{\x_n}(\gamma_{P,M}^*)\geq1$.
\end{define}
\begin{remarks}
The following properties are straightforward:
\begin{itemize}
\item[(i)] For each $a\in\R^{n-1}$, the regular function $\gamma^{\ast,a}_{P,M}(\x_n):=\gamma_{P,M}^*(a,\x_n)\in\R(\x_n)$ (which depends only on the variable $\x_n$) has odd degree greater than or equal to $1$; hence, $\gamma^{\ast,a}_{P,M}(\R)=\R$ for each $a\in\R^{n-1}$.
\item[(ii)] $\TS^*_{P,M}(\pi_n^{-1}(a,0))=\pi_n^{-1}(a,0)$ for each $a\in\R^{n-1}$ and so $\TS^*_{P,M}(\R^n)=\R^n$.
\item[(iii)] The set of fixed points of $\TS^*_{P,M}$ contains the set $\{x_nP(x)=0\}$.
\end{itemize}
\end{remarks}

Next, we present some preliminary results to prove Lemma \ref{trim2}, which is the counterpart of Lemma \ref{trim}, concerning this time the second trimming position.

\begin{lem}\label{acotacionpm}
Let $f,\veps:\R^n\to\R$ be differentiable functions whose common zero-set $\{f=0,\veps=0\}$ is empty. Write $g:=\x_n\frac{f^2}{f^2+\veps^2}$ and suppose that for each $x\in\{x_n\leq0\}$ the following inequalities hold:
\begin{itemize}
\item[(i)] $|f(x)|\geq1$ and $|x_n||\frac{\partial f}{\partial\x_n}(x)|\veps^2(x)<\frac{1}{16}$;
\item[(ii)] $|x_n\veps(x)|<\frac{1}{8}$, $|\veps(x)|<\frac{1}{2}$ and $|\frac{\partial\veps}{\partial\x_n}(x)|<\frac{1}{4}$.
\end{itemize}
Then  $\frac{\partial g}{\partial\x_n}(x)>\frac{1}{2}$ for each $x\in\{x_n\leq0\}$ and $\frac{\partial g}{\partial\x_n}(x)=1$ for each $x\in\{\veps=0\}$.
\end{lem}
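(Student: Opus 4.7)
The plan is to compute $\partial g/\partial\x_n$ explicitly via the quotient rule, then estimate it term by term using the six hypotheses from (i) and (ii). Setting $f_n:=\partial f/\partial\x_n$ and $\veps_n:=\partial\veps/\partial\x_n$, a direct calculation gives
$$
\frac{\partial g}{\partial\x_n}=\frac{f^2}{f^2+\veps^2}+\x_n\cdot\frac{2f\veps(f_n\veps-f\veps_n)}{(f^2+\veps^2)^2}=\frac{f^2}{f^2+\veps^2}+\frac{2\x_n f f_n\veps^2}{(f^2+\veps^2)^2}-\frac{2\x_n f^2\veps\veps_n}{(f^2+\veps^2)^2}.
$$
The second claim of the lemma is immediate: at any $x$ with $\veps(x)=0$ the last two summands vanish and the first equals $1$.

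For the first claim, I would bound each summand on $\{x_n\le 0\}$. Since $f^2\ge 1$ by (i), we have $f^2+\veps^2\ge 1$, hence $\veps^2/(f^2+\veps^2)\le\veps^2<1/4$ by (ii); therefore the first summand satisfies
$$
\frac{f^2}{f^2+\veps^2}=1-\frac{\veps^2}{f^2+\veps^2}>1-\veps^2>\tfrac{3}{4}.
$$
For the middle summand I use $(f^2+\veps^2)^2\ge f^4\ge f^2$ and $|f|\ge 1$, so
$$
\left|\frac{2\x_n f f_n\veps^2}{(f^2+\veps^2)^2}\right|\le\frac{2|\x_n||f_n|\veps^2}{|f|}\le 2|\x_n||f_n|\veps^2<\tfrac{1}{8},
$$
invoking the second inequality in (i). For the last summand I use $f^2/(f^2+\veps^2)^2\le 1/(f^2+\veps^2)\le 1$, which gives
$$
\left|\frac{2\x_n f^2\veps\veps_n}{(f^2+\veps^2)^2}\right|\le 2|\x_n\veps||\veps_n|<2\cdot\tfrac{1}{8}\cdot\tfrac{1}{4}=\tfrac{1}{16},
$$
using the first and third inequalities in (ii). Adding these bounds yields
$$
\frac{\partial g}{\partial\x_n}(x)>\tfrac{3}{4}-\tfrac{1}{8}-\tfrac{1}{16}=\tfrac{9}{16}>\tfrac{1}{2}
$$
for every $x\in\{\x_n\le 0\}$, as required.

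The proof is essentially a routine estimation once the derivative is written out, so I do not expect any conceptual obstacle; the only thing to be careful about is matching each of the three error terms with the correct hypothesis, and in particular using $|f|\ge 1$ twice (once to absorb a factor of $|f|$ into the denominator and once to guarantee $f^2+\veps^2\ge 1$) so that the constants $1/8$, $1/16$ come out small enough that $3/4$ minus them still exceeds $1/2$.
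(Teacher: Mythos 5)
Your proposal is correct and follows essentially the same route as the paper: write out $\partial g/\partial\x_n$ explicitly and bound the error terms on $\{x_n\leq0\}$ using the six hypotheses, with the identity term equal to $1$ on $\{\veps=0\}$ because $\{f=0,\veps=0\}=\varnothing$. The only (cosmetic) difference is that the paper works with $g=\x_n(1-h)$, $h=\veps^2/(f^2+\veps^2)$, and bounds $|\x_n\partial h/\partial\x_n|$ by three summands totalling $13/64$, whereas you differentiate the quotient directly and group these into two summands, arriving at the slightly sharper lower bound $9/16>\tfrac12$.
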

\begin{proof}
We write $h:=\frac{\veps^2}{f^2+\veps^2}$ and observe that $g=\x_n(1-h)$,  $\frac{\partial g}{\partial\x_n}=1-h-\x_n\frac{\partial h}{\partial\x_n}$, and
$$
\frac{\partial h}{\partial\x_n}=\frac{2\veps}{f^2+\veps^2}\frac{\partial\veps}{\partial\x_n}-\frac{\veps^2}{(f^2+\veps^2)^2}\Big(2f\frac{\partial f}{\partial\x_n}+2\veps\frac{\partial\veps}{\partial\x_n}\Big).
$$
Notice that  $\frac{\partial g}{\partial\x_n}(x)=1$ for each $x\in\{\veps=0\}$. Using the inequality
$$
\Big|\x_n\frac{\partial h}{\partial\x_n}\Big|\leq2|\x_n\veps|\Big|\frac{\partial\veps}{\partial\x_n}\Big|\frac{1}{(f^2+\veps^2)}+2|\x_n|\Big|\frac{\partial f}{\partial\x_n}\Big|\veps^2\frac{|f|}{(f^2+\veps^2)^2}+2|\x_n\veps|\Big|\frac{\partial\veps}{\partial\x_n}\Big|\frac{\veps^2}{(f^2+\veps^2)^2}
$$
and taking into account for $x\in\{x_n\leq0\}$  the inequalities in the statement of the Lemma we get
$$
\Big|x_n\frac{\partial h}{\partial\x_n}(x)\Big|\leq2\cdot\frac{1}{8}\cdot\frac{1}{4}\cdot1+2\cdot\frac{1}{16}\cdot1\cdot1+2\cdot\frac{1}{8}\cdot\frac{1}{4}\cdot\frac{1}{4}=\frac{13}{64}<\frac{1}{4}.
$$
Thus,
$$
\frac{\partial g}{\partial\x_n}(x)=1-h(x)-x_n\frac{\partial h}{\partial\x_n}(x)\geq1-\Big|\frac{\veps^2(x)}{\veps^2(x)+f^2(x)}\Big|-\Big|x_n\frac{\partial h}{\partial\x_n}(x)\Big|>1-\frac{1}{4}\cdot1-\frac{1}{4}=\frac{1}{2},
$$
as wanted.
\end{proof}

\begin{lem}\label{acotacionpm2}
Let $P\in\R[\x]$ be a polynomial of degree $d\geq1$, and set $f:=1-\x_nP^2$, $\veps:=\veps_M:=\frac{\x_nP}{M(1+\|\x\|^2)^{d+1}}$. Then  there is $M_0>0$ such that for $M\geq M_0$ and $x\in\R^n$ the following inequalities hold:
\begin{itemize}
\item[(i)] $|x_n||\frac{\partial f}{\partial\x_n}(x)|\veps^2(x)<\frac{1}{16}$;
\item[(ii)] $|x_n\veps(x)|<\frac{1}{8}$, $|\veps(x)|<\frac{1}{2}$ and $|\frac{\partial\veps}{\partial\x_n}(x)|<\frac{1}{4}$.
\end{itemize}
\end{lem}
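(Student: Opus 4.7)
The plan is to observe that each quantity appearing in (i) and (ii) can be written as a polynomial in $x$ divided by $M^k$ times a power of $(1+\|x\|^2)$, with the exponent of $(1+\|x\|^2)$ at least half the degree of the polynomial numerator. Then Lemma \ref{acotacion} yields a uniform bound of the form $c/M$ (respectively $c/M^2$ in part (i)) for a constant $c$ depending only on $P$. Since only finitely many such inequalities need to hold, I can take $M_0$ to dominate the finitely many resulting thresholds.

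First I would dispatch the three inequalities in (ii). Since $\deg(\x_n P) = d+1 \le 2(d+1)$, Lemma \ref{acotacion} gives $|x_n P(x)| \le c_1(1+\|x\|^2)^{d+1}$ with $c_1$ depending only on $P$; dividing by $M(1+\|x\|^2)^{d+1}$ yields $|\veps(x)| \le c_1/M$. The same reasoning applied to $\x_n^2 P$ (of degree $d+2 \le 2(d+1)$) shows $|x_n\,\veps(x)| \le c_2/M$. For the partial derivative I would expand
\[
\frac{\partial \veps}{\partial \x_n} = \frac{P + \x_n\, \partial P/\partial \x_n}{M(1+\|\x\|^2)^{d+1}} - \frac{2(d+1)\, \x_n^2 P}{M(1+\|\x\|^2)^{d+2}},
\]
and bound each summand as above: the first numerator has degree $\le d \le 2(d+1)$ and the second has degree $d+2 \le 2(d+2)$, so both summands are $O(1/M)$, producing $|\partial \veps/\partial \x_n| \le c_3/M$.

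For (i), the key algebraic identity is
\[
\x_n \frac{\partial f}{\partial \x_n}\, \veps^2 \;=\; -\frac{\x_n^3\, P^2\bigl(P^2 + 2\x_n P\, \partial P/\partial \x_n\bigr)}{M^2(1+\|\x\|^2)^{2d+2}},
\]
obtained from $\partial f/\partial \x_n = -(P^2 + 2\x_n P\,\partial P/\partial \x_n)$ and the definition of $\veps$. The numerator is a polynomial of degree at most $4d+3 \le 2(2d+2)$, so Lemma \ref{acotacion} bounds it by $c_4(1+\|x\|^2)^{2d+2}$, giving $|x_n\,\partial f/\partial \x_n|\,\veps^2 \le c_4/M^2$. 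The gain of an extra factor $M^{-1}$ here (compared to the estimates of (ii)) comes from squaring $\veps$ and is precisely what allows the stronger $1/16$ bound. Finally, choosing any $M_0$ with $c_1/M_0 < 1/2$, $c_2/M_0 < 1/8$, $c_3/M_0 < 1/4$, and $c_4/M_0^2 < 1/16$ simultaneously yields all the required bounds for every $M \ge M_0$.

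I do not expect any real obstacle: the only care needed is the degree bookkeeping to check in each case that Lemma \ref{acotacion} applies, i.e.\ that the polynomial numerator has degree at most twice the exponent of $(1+\|\x\|^2)$ in the denominator. Once those degree inequalities are verified, each estimate is automatically uniform in $x$ and of the desired order in $M$, and the conclusion is immediate.
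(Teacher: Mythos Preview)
Your proposal is correct and follows essentially the same route as the paper: compute $\partial\veps/\partial\x_n$ explicitly, record the degree bounds $\deg(\x_n P)=d+1$, $\deg(\x_n^2P)=d+2$, $\deg(P+\x_n\,\partial P/\partial\x_n)\le d$, $\deg(\x_n^3\,\partial f/\partial\x_n\,P^2)\le 4d+3$, and then invoke Lemma~\ref{acotacion} for each term to obtain uniform bounds of the form $c/M$ (or $c/M^2$ in~(i)), finally choosing $M_0$ large enough. The only cosmetic difference is that the paper absorbs the extra factor of $M$ in~(i) rather than tracking the $1/M^2$ separately, but the substance is identical.
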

\begin{proof}
First, observe that
\[
\begin{array}{lll}
\displaystyle \deg\Big(\x_n^3\frac{\partial f}{\partial\x_n}P^2\Big)\leq 4d+3, &\qquad&\displaystyle\deg(\x_n^2P)=d+2,\\
\displaystyle\deg\Big(P+\x_n\frac{\partial P}{\partial\x_n}\Big)\leq d, &&\displaystyle \deg(\x_nP)=d+1.
\end{array}
\]
Moreover, we have
$$
\frac{\partial\veps}{\partial\x_n}=\Big(P+\x_n\frac{\partial P}{\partial\x_n}\Big)\frac{1}{M(1+\|\x\|^2)^{d+1}}-\frac{2(d+1)\x_n^2P}{M(1+\|\x\|^2)^{d+2}}
$$

By Lemma \ref{acotacion}, there is $M_0>0$ such that if $M\geq M_0$, we have:
\begin{center}
\begin{tabular}{rrl}
$\Big|\x_n^3\dfrac{\partial f}{\partial\x_n}P^2\Big|<\dfrac{M}{16}(1+\|\x\|^2)^{2d+2}$& $\Longrightarrow$&$|\x_n|\Big|\dfrac{\partial f}{\partial\x_n}\Big|\veps^2<\dfrac{1}{16}$,\\[8pt]
$|\x_n^2P|<\dfrac{M}{8}(1+\|\x\|^2)^{d+1}$&$\Longrightarrow$&$|\x_n\veps|<\dfrac{1}{8}$,\\[8pt]
$|\x_nP|<\dfrac{M}{2}(1+\|\x\|^2)^{d+1}$&$\Longrightarrow$&$|\veps|<\dfrac{1}{2}$,\\[8pt]
$\Big|P+\x_n\dfrac{\partial P}{\partial\x_n}\Big|<\dfrac{M}{8}(1+\|\x\|^2)^{d+1}$&\!\!\!\!\!\!\!\!\!\multirow{2}{*}{\Bigg\}\ $\Longrightarrow$}&\multirow{2}{*}{$\Big|\dfrac{\partial\veps}{\partial\x_n}(x)\Big|<\dfrac{1}{4},$}\\
$2(d+1)|\x_n^2P|<\dfrac{M}{8}(1+\|\x\|^2)^{d+2}$ 
\end{tabular}
\end{center}
and so all the inequalities in the statement hold.
\end{proof}

\begin{lem}\label{gp} 
Let $P\in\R[\x]$ be a nonzero polynomial such that $d:=\deg(P)=\deg_{\x_n}(P)$. Then 
\begin{itemize} 
\item[(i)] For each compact set $K\subset\R^n$ there is $M_K>0$ such that if $M\geq M_K$ the partial derivative $\frac{\partial\gamma_{P,M}^{\rm p}}{\partial\x_n}$ is strictly positive on $K\cap\{x_n\leq0\}$ and it is constantly $1$ on the set $\{x_nP(x)=0\}$.
\item[(ii)] There is $M_0>0$ such that if $M\geq M_0$ the partial derivative $\frac{\partial\gamma_{P,M}^{\rm r}}{\partial\x_n}$ is strictly positive on the set $\{x_n\leq0\}$ and it is constantly $1$ on the set $\{x_nP(x)=0\}$ for each $M>0$.
\end{itemize}
\end{lem}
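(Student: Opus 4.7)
\medskip

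\noindent\textbf{Proof plan.} The two parts are handled independently but in parallel spirit: in each case we compute the partial derivative and isolate the contribution that destroys positivity, then absorb it by choosing $M$ large.

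For part (i), the idea is that $\gamma^{\rm p}_{P,M} = \x_n + O(1/M)$, and the same is true of $\partial\gamma^{\rm p}_{P,M}/\partial\x_n$ uniformly on compact sets. Concretely, setting $u := \x_nP^2/M$, a direct computation gives
\[
\frac{\partial\gamma^{\rm p}_{P,M}}{\partial\x_n}
=(1-u)^2 - \frac{2\x_n(1-u)}{M}\left(P^2 + 2\x_nP\,\frac{\partial P}{\partial\x_n}\right).
\]
On any compact $K\subset\R^n$ the second summand is bounded in absolute value by $C_K/M$ and the first tends uniformly to $1$ as $M\to\infty$, so choosing $M_K$ large enough forces $\partial\gamma^{\rm p}_{P,M}/\partial\x_n>0$ throughout $K$ (in particular on $K\cap\{x_n\leq 0\}$, although in this regime no restriction on $K$ is actually needed). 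At any point where $\x_nP=0$, both summands in the factor $P^2+2\x_nP\,\partial P/\partial\x_n$ vanish, and the factor $\x_n(1-u)$ in front vanishes when $x_n=0$; in either sub-case $u=0$ too, so the whole derivative equals $(1-0)^2=1$. This last computation is manifestly independent of $M$.

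For part (ii), the key move is to recognize $\gamma^{\rm r}_{P,M}$ as an instance of the abstract setting of Lemma~\ref{acotacionpm}. Namely, set
\[
f:=1-\x_nP^2,\qquad \veps:=\veps_M:=\frac{\x_nP}{M(1+\|\x\|^2)^{d+1}},
\]
so that $\gamma^{\rm r}_{P,M}=\x_n\,f^2/(f^2+\veps^2)$ coincides with the function $g$ of Lemma~\ref{acotacionpm}. Three things need to be verified in order to apply that lemma. First, $\{f=0,\veps=0\}=\varnothing$: if $\veps(x)=0$ then $x_nP(x)=0$, whence $f(x)=1\neq 0$. Second, on the half-space $\{x_n\leq 0\}$ we have $x_nP^2\leq 0$, so $f\geq 1$, which gives the hypothesis $|f|\geq 1$. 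Third, the analytical inequalities (i) and (ii) of Lemma~\ref{acotacionpm} (those involving $|\x_n||\partial f/\partial\x_n|\veps^2$, $|\x_n\veps|$, $|\veps|$ and $|\partial\veps/\partial\x_n|$) are provided, for $M$ large enough, by Lemma~\ref{acotacionpm2}.

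With these three ingredients in place, Lemma~\ref{acotacionpm} directly yields $\partial\gamma^{\rm r}_{P,M}/\partial\x_n>\tfrac12>0$ on $\{x_n\leq 0\}$, giving the strict positivity claim. Moreover, the same lemma gives $\partial\gamma^{\rm r}_{P,M}/\partial\x_n=1$ on $\{\veps=0\}=\{\x_nP=0\}$; since the defining equation $\veps=0$ does not involve $M$ and the derivative at such a point depends only on the values $f=1$, $\veps=0$ and $\partial\veps/\partial\x_n$ (whose contribution is killed by the factor $\veps$ appearing in $\partial h/\partial\x_n$), this identity holds for every $M>0$, as claimed. I expect no essential obstacle; the only delicate point is the bookkeeping needed to match $\gamma^{\rm r}_{P,M}$ with the template of Lemma~\ref{acotacionpm} and to confirm that the $M$-independent hypothesis $|f|\geq 1$ does hold on $\{x_n\leq 0\}$ (which, crucially, is where the sign of $x_nP^2$ is controlled).
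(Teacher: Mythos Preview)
Your proposal is correct and follows essentially the same route as the paper. For part (i) the paper factors the derivative as $(1-u)\bigl(1-3u-\tfrac{4}{M}\x_n^2P\,\partial P/\partial\x_n\bigr)$ and exploits $x_n\le 0$ to make the first two terms $\ge 1$, bounding only the last term on $K$; your uniform $O(1/M)$ argument is slightly cruder but equally valid, and part (ii) matches the paper verbatim (reduction to Lemmata~\ref{acotacionpm} and~\ref{acotacionpm2} via $f=1-\x_nP^2$ and $\veps=\veps_M$).
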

\begin{proof}
To prove (i), observe first that
$$
\frac{\partial\gamma_{P,M}^{\rm p}}{\partial\x_n}=\Big(1-\x_n\frac{P^2(\x)}{M}\Big)
\Big(1-3\x_n\frac{P^2(\x)}{M}-4\x_n^2\frac{P(\x)}{M}\frac{\partial P}{\partial\x_n}(\x)\Big)
$$
Denote by $M_K\geq1$ the maximum of the continuous function 
$$
\R^n\to\R,\ x:=(x_1,\dots,x_n)\mapsto\Big|4x_n^2P(x)\frac{\partial P}{\partial\x_n}(x)\Big|
$$ 
on the compact set $K\cap\{x_n\leq0\}$. If $M\geq M_K$, then $\frac{\partial\gamma_{P,M}^{\rm p}}{\partial\x_n}$ is strictly positive over $K\cap\{x_n\leq0\}$. Moreover, since $\x_nP(\x)$ divides $\frac{\partial\gamma_{P,M}^{\rm p}}{\partial\x_n}(\x)-1$, the partial derivative $\frac{\partial\gamma_{P,M}^{\rm p}}{\partial\x_n}$ is constantly $1$ on the set $\{x_nP(x)=0\}$.

To prove (ii), note that $f:=1-\x_nP^2$ is greater than or equal to $1$ on the set $\{x_n\leq0\}$ and apply Lemmata \ref{acotacionpm} and \ref{acotacionpm2} to $f$, $\veps:=\frac{\x_nP(\x)}{(M(1+\|\x\|^2))^{d+1}}$ and $g:=\gamma_{P,M}^{\rm r}$.
\end{proof}

We are ready to prove the counterpart of Lemma~\ref{trim} for the second trimming position of a convex polyhedron.

\begin{lem}\label{trim2}
Let $\pol\subset\R^n$ be an $n$-dimensional convex polyhedron in either weak or strong second trimming position and let $\Ff_1,\ldots,\Ff_r$ be the facets of $\pol$ that are contained in hyperplanes nonparallel to the vector $\vec{e}_n$. Let $\Lambda$ be an arbitrary subset of the hyperplane $\Pi:=\{x_n=0\}$ and let $P\in\R[\x]$ be a nonzero polynomial that vanishes identically on the facets $\Ff_1,\ldots,\Ff_r$, with $\deg(P)=\deg_{\x_n}(P)$ \em (see Remark \ref{veryeasy})\em. We write 
$$
\ast=\begin{cases}
{\rm r}&\text{if the second trimming position of $\pol$ is only weak,}\\
{\rm p}&\text{if the second trimming position of $\pol$ is strong.}
\end{cases}
$$
Then  there exists $M>0$ such that 
$$
\mathsf{G}_{P,M}^{\ast}(\R^n\setminus\Int(\pol))=\mathsf{G}_{P,M}^{\ast}(\R^n\setminus(\Int(\pol)\cup\Lambda))=\R^n\setminus\Int_{\R^n}(\pol\cap\{x_n\leq0\}).
$$
\end{lem}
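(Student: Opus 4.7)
The plan is to mimic the structure of Lemma \ref{trim}, reducing to a fiber-by-fiber analysis via the projection $\pi_n$. Since $\TS^*_{P,M}$ preserves each fiber $\pi_n^{-1}(a,0)$, for each $a\in\R^{n-1}$ I set
\begin{align*}
\Mm_a&:=\{t\in\R:(a,t)\notin\Int(\pol)\},\\
\Nn_a&:=\{t\in\R:(a,t)\notin\Int(\pol)\cup\Lambda\},\\
\Ss_a&:=\{t\in\R:(a,t)\notin\Int_{\R^n}(\pol\cap\{x_n\leq0\})\},
\end{align*}
and the task reduces to showing $\gamma^{*,a}_{P,M}(\Mm_a)=\gamma^{*,a}_{P,M}(\Nn_a)=\Ss_a$ for every $a$. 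Because $\Lambda\subset\{x_n=0\}$, one has either $\Nn_a=\Mm_a$ or $\Nn_a=\Mm_a\setminus\{0\}$.

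Next I choose $M$. In the strong position case ($\ast={\rm p}$), Remark \ref{bajuste}(2) supplies $N>0$ with $\bigcup_{a\in{\mathfrak B}_\pol}\partial\Ii_a\subset{\mathfrak B}_\pol\times[-N,N]$, and Lemma \ref{gp}(i) applied to $K:=\ol{{\mathfrak B}_\pol}\times[-N,N]$ gives $\partial\gamma^{\rm p}_{P,M}/\partial\x_n>0$ on $K\cap\{x_n\leq0\}$; outside $K$ one still has the elementary bound $\gamma^{\rm p}_{P,M}(x)\leq x_n$ on $\{x_n\leq0\}$, since $(1-x_nP^2/M)^2\geq1$ there. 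In the weak case ($\ast={\rm r}$), Lemma \ref{gp}(ii) directly supplies positivity of $\partial\gamma^{\rm r}_{P,M}/\partial\x_n$ on all of $\{x_n\leq0\}$. I further enlarge $M$ so that $tP(a,t)^2=M$ has a positive root $t^{**}>q_a$ for every $a$ with $\Ii_a\neq\varnothing$; this is possible because $q_aP(a,q_a)^2=0<M$ and $\ol{{\mathfrak B}_\pol}$ is compact in the strong case (in the weak case the same property is arranged by increasing $M$ further). Throughout, $\gamma^{*,a}$ is continuous and surjective onto $\R$ with $\gamma^{*,a}(0)=0$, and fixes every point with $x_nP=0$.

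Then I run a case analysis on $\Int(\pol)\cap\pi_n^{-1}(a,0)=\{a\}\times(p_a,q_a)$, where $q_a\in\R$ by the second trimming hypothesis and $p_a\in\R\cup\{-\infty\}$: Case A, the fiber misses $\Int(\pol)$, so $\Mm_a=\Ss_a=\R$; Case B, $q_a\leq0$, so $\Ss_a=\Mm_a$; Case C, $p_a\geq0$, so $\Ss_a=\R$; Case D, $p_a<0<q_a$, so $\Ss_a=(-\infty,p_a]\cup[0,\infty)$. In Cases B, C, D, Lemma \ref{prelonchas1} forces $P(a,p_a)=P(a,q_a)=0$, so both endpoints are fixed by $\gamma^{*,a}$. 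On the left branch $(-\infty,p_a]$ the image equals $(-\infty,p_a]$: this uses monotonicity inside $K$ together with the bound $\gamma\leq t$ outside $K$ in the polynomial case, and plain monotonicity in the regular case. On the right branch $[q_a,\infty)$, sign-preservation $\gamma^{*,a}([0,\infty))\subset[0,\infty)$ together with the alternate zero $t^{**}$ yields $\gamma^{*,a}([q_a,\infty))\supseteq[0,\infty)$; in Case B the refinement $\gamma^{*,a}([q_a,0])=[q_a,0]$ follows from strict monotonicity on the compact segment $[q_a,0]\subset K$. This last point is the one place that genuinely uses the strong position hypothesis, since $q_a<0$ forces $a\in{\mathfrak B}_\pol$ and so $(a,q_a)\in K$. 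The identity $\gamma^{*,a}(\Nn_a)=\Ss_a$ is then immediate because removing $0$ from the input never removes $0$ from the output, thanks to the alternate preimage $t^{**}$. The main obstacle is the careful bookkeeping in Cases B and D and the uniform choice of $M$ that realizes all the required inequalities for every relevant $a$; once this is done, the fiberwise equalities aggregate to the global identities claimed in the statement.
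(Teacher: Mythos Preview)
Your proof is correct and follows essentially the same fibre-by-fibre strategy as the paper, with the same choice of $M$ via Lemma~\ref{gp} and the same role played by the ``alternate zero'' $t^{**}$ (which the paper obtains as $t_0$ in its step~\ref{trim2}.\ref{diamond1}). Two cosmetic points worth cleaning up: no enlargement of $M$ is needed to produce $t^{**}$, since $q_aP(a,q_a)^2=0$ already guarantees a root of $tP(a,t)^2=M$ (resp.\ $=1$ in the regular case, where the zero of $\gamma^{\rm r}$ does not depend on $M$ at all) beyond $q_a$; and in your Case~C the asserted equality $\gamma((-\infty,p_a])=(-\infty,p_a]$ is not justified for $p_a>0$ (your monotonicity and the bound $\gamma\le t$ only cover $t\le 0$), but you only use the inclusion $\supseteq$, which follows from $\gamma(p_a)=p_a$ and the intermediate value theorem.
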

\begin{proof} 
First, for each $a\in\R^{n-1}$ consider the sets
\begin{equation*}
\begin{split}
\Mm_a:=\,&\{t\in\R:\ (a,t)\in\R^n\setminus\Int(\pol)\},\\[2pt]
\Nn_a:=\,&\{t\in\R:\ (a,t)\in\R^n\setminus(\Int(\pol)\cup\Lambda)\}=\left\{\begin{array}{cl}
\Mm_a&\ \text{ if $(a,0)\not\in\Lambda$,}\\
\Mm_a\setminus\{0\}&\ \text{ if $(a,0)\in\Lambda$}.
\end{array}\right.\\[2pt]
\Tt_a:=\,&\{t\in\R:\ (a,t)\in\R^n\setminus\Int(\pol\cap\{x_n\leq0\})\}=\Mm_a\cup[0,+\infty[.
\end{split}
\end{equation*}
From the previous definition the following equalities follow:
\begin{multline*}
\R^n\setminus\Int(\pol)=\bigsqcup_{a\in\R^{n-1}}\{a\}\times\Mm_a,\quad
\R^n\setminus(\Int(\pol)\cup\Lambda)=\bigsqcup_{a\in\R^{n-1}}\{a\}\times\Nn_a,\\
\text{and}\quad\R^n\setminus\Int(\pol\cap\{x_n\le 0\})=\bigsqcup_{a\in\R^{n-1}}\{a\}\times\Tt_a.
\end{multline*}\setcounter{substep}{0}

\begin{substeps}{trim2}\label{signder2}
At this point, we choose $M>0$ as follows. If $\pol$ is in strong second trimming position, ${\mathfrak B}_\pol$ is bounded and there exists, by Remark \ref{bajuste}(2), a real number $N>0$ such that $B:=\bigcup_{a\in{\mathfrak B}_\pol}\partial\Ii_a\subset{\mathfrak B}_\pol\times[-N,N]$, where $\Ii_a:=\pi_n^{-1}(a,0)\cap\pol$. By applying Lemma \ref{gp} (i) to the compact set $K:=\ol{\mathfrak B}_\pol\times[-N,N]$, there exists $M>0$ such that the partial derivative $\frac{\partial\gamma^*_{P,M}}{\partial\x_n}=\frac{\partial\gamma^{\rm p}_{P,M}}{\partial\x_n}$ is strictly positive on $K\cap\{x_n\leq0\}$. On the other hand, if $\pol$ is in weak second trimming position there exists, by Lemma \ref{gp} (ii), $M>0$ such that the partial derivative $\frac{\partial\gamma^*_{P,M}}{\partial\x_n}=\frac{\partial\gamma^{\rm r}_{P,M}}{\partial\x_n}$ is strictly positive on the set $\{x_n\leq0\}$.
\end{substeps}\enlargethispage{0.75mm}

\begin{substeps}{trim2}\label{solofa}
Since $\mathsf{G}_{P,M}^*$ preserves the lines $\pi_n^{-1}(a,0)$, to prove the statement it is enough to show that $\gamma_{P,M}^{*,a}(\Mm_a)=\gamma_{P,M}^{*,a}(\Nn_a)=\Tt_a$ for all $a\in\R^{n-1}$.  Before proving this we point out a couple of facts:
\end{substeps}

\begin{substeps}{trim2}
First,  if $r\leq0$ and $\gamma_{P,M}^{*,a}(r)=r$, then 
\begin{equation}\label{diamond}
\gamma_{P,M}^{*,a}(]{-\infty},r[)={]{-\infty},r[}.
\end{equation}
\end{substeps}
\noindent Indeed, observe that $\gamma_{P,M}^{{\rm p},a}(t)\leq t$ for each $t<0$. Besides, by Lemma~\ref{gp} (ii) the function $\gamma_{P,M}^{{\rm r},a}(t)$ is strictly increasing on the interval ${]{-\infty},0]}$.
Since also $\lim_{t\to{-\infty}}\gamma_{P,M}^{*,a}(t)={-\infty}$ we conclude that equality \eqref{diamond} holds. As a particular case, for $r=0$ we have $\gamma_{P,M}^{*,a}(0)=0$ and so $\gamma_{P,M}^{*.a}(]-\infty, 0[)=]-\infty,0[$.

\begin{substeps}{trim2}\label{diamond1}
Second, if $r\ge 0$ with $rP(r,a)=0$ then $\gamma_{P,M}^{*,a}({]r,+\infty[})={[0,+\infty}[$.
\end{substeps}\\
Indeed, the inclusion $\gamma_{P,M}^{*,a}(]r,+\infty[)\subset[0,+\infty[$ is trivial. To prove the converse, we define 
$$
\delta_a^*(\t):=\begin{cases}
1-\frac{\t P^2(a,\t)}{M}&\text{if $\ast={\rm p}$}\\
1-\t P^2(a,\t)&\text{if $\ast={\rm r}$}\\
\end{cases}
$$ 
and observe that if $rP(a,r)=0$ then $\delta_a^*(r)=1$. Thus, since $\lim_{t\to\infty}\delta_a^*(t)={-\infty}$, there is $t_0>r$ such that $\delta_a^*(t_0)=0$. By continuity, we deduce 
$$
{[0,+\infty[}\subset\gamma_{P,M}^{*,a}([t_0,+\infty[)\subset\gamma_{P,M}^{*,a}(]r,+\infty[)\subset{[0,+\infty[},
$$ 
and so \ref{trim2}.\ref{diamond1} is proved. This applies in particular to $r=0$ and so, for each $a\in\R^{n-1}$ we have
\begin{equation}\label{heart}
\gamma_{P,M}^{*,a}(]0,+\infty[)=[0,+\infty[.
\end{equation}

Let us prove now \ref{trim2}.\ref{solofa}. We distinguish the following cases:
 
\vspace{2mm}
\noindent{\bf Case 1}: $\Mm_a=\R$. Then  $\Mm_a\setminus \{0\}\subset\Nn_a\subset \Mm_a$, and in view of equalities \eqref{diamond} and \eqref{heart} we have
$$
\R=\gamma_{P,M}^{*,a}(]{-\infty},0[)\cup\gamma_{P,M}^{*,a}(]0,+\infty[)\subset\gamma_{P,M}^{*,a}(\Nn_a)\subset\gamma_{P,M}^{*,a}(\Mm_a)\subset\R.
$$
Thus, $\gamma_{P,M}^{*,a}(\Mm_a)=\gamma_{P,M}^{*,a}(\Nn_a)=\R=\Tt_a$.

\vspace{2mm}
\noindent{\bf Case 2}: $\Mm_a=\R\setminus]p,q[$, where ${-\infty}\leq p<q$. Observe that $(a,q)\in\partial\pol$ and, if $p>{-\infty}$, also $(a,p)\in\partial\pol$. The following equalities also hold:
$$
\begin{array}{lll}
\lim_{t\to{-\infty}}\gamma_{P,M}^{*,a}(t)={-\infty},&\quad&\gamma_{P,M}^{*,a}(p)=p\quad\text{(if $p>{-\infty}$)},\\[8pt]
\lim_{t\to+\infty}\gamma_{P,M}^{*,a}(t)=+\infty,&\quad&\gamma_{P,M}^{*,a}(q)=q.
\end{array}
$$

Now, we analyze the behavior of the function $\gamma_{P,M}^{*,a}$ on unbounded intervals of interest:

\noindent (2.a) If $q<0$, then $\gamma_{P,M}^{*,a}([q,+\infty[)=\gamma_{P,M}^{*,a}({[q,+\infty[}\setminus\{0\})=[q,+\infty[$. 

\noindent Indeed, since $\Mm_a\setminus\{0\}\subset \Nn_a\subset \Mm_a$ and $\gamma_{P,M}^{*,a}$ is strictly increasing in the interval $[q,0]\subset[-N,N]$ (see \ref{trim2}.\ref{signder2}), we deduce that $\gamma_{P,M}^{*,a}([q,0[)=[q,0[$. We conclude, using once more equality \eqref{heart}, that 
$$
{[q,+\infty[}={[q,0[}\cup\ {[0,+\infty[}=\gamma_{P,M}^{*,a}({[q,+\infty[}\setminus\{0\})=\gamma_{P,M}^{*,a}([q,+\infty[).
$$

\noindent(2.b) If $q\ge 0$ then $\gamma_{P,M}^{*,a}([q,+\infty[)=\gamma_{P,M}^{*,a}({[q,+\infty[} \setminus\{0\})=[0,+\infty[$, in view of \ref{trim}.\ref{diamond1}.

\noindent(2.c) If ${-\infty}<p<0$ then $\gamma_{P,M}^{*,a}(]{-\infty},p])={]{-\infty},p]}$, by equality \eqref{diamond}. 

\noindent(2.d) Finally, if $p\ge 0$ we have ${]{-\infty},0[}\subset\gamma_{P,M}^{*,a}(]{-\infty},p]\setminus\{0\})\subset\gamma_{P,M}^{*,a}(]{-\infty},p])$, as a consequence of equality \eqref{diamond}. 

Putting all together we obtain 
$$
\gamma_{P,M}^{*,a}(\Mm_a)=\gamma_{P,M}^{*,a}(\Nn_a)=\left\{\begin{array}{lll}
\Mm_a=\Tt_a&\ &\text{if $q<0$,}\\[4pt]
\Mm_a\cup{[0,+\infty[}=\Tt_a&\ &\text{if $q\geq0$,}
\end{array}\right.
$$ 
and the proof of \ref{trim2}.\ref{solofa} is complete.
\end{proof}

\section{Complement of the interior of a convex polyhedron}\label{s7}

In this section we prove the part of Theorem \ref{mainext} concerning the complement of the interior of a convex polyhedron. More precisely, we prove the following:
\begin{thm}\label{redextai}
Let $n\geq2$ and let $\pol\subset\R^n$ be an $n$-dimensional convex polyhedron which is not a layer. We have: 
\begin{itemize}
\item[(i)] If $\pol$ is bounded, then $\R^n\setminus\Int(\pol)$ is a polynomial image of $\R^n$. 
\item[(ii)] If $\pol$ is unbounded, then $\R^n\setminus\Int(\pol)$ is regular image of $\R^n$.
\end{itemize}
\end{thm}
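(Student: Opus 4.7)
The plan is to mirror the structure of the proof of Theorem \ref{redexta}, replacing the first trimming tools by their type-II counterparts: Lemma \ref{trim2} in place of Lemma \ref{trim}, and Remark \ref{bajuste}(1) in place of Lemma \ref{colocarcasipmn}. The base case $n=2$ is handled in \cite{u2}. The degenerate case of part (ii) is treated as in Case 1 of the proof of Theorem \ref{redexta}(ii): by Lemma \ref{lem:deg2} we reduce to a lower-dimensional problem, with the half-space subcase handled directly and the layer subcase excluded by hypothesis.

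For part (i), I exploit the fact that every bounded convex polyhedron is automatically in strong second trimming position. I would first establish by induction on $n$ that $\R^n\setminus\Int(\ol{\Qq}_n)$ is a polynomial image of $\R^n$, mimicking the proof of Lemma \ref{lem2}: the auxiliary polyhedron $\pol_1=\{x_1\ge0,\dots,x_{n-3}\ge0,\,x_{n-2}+x_n\ge0,\,x_{n-2}-x_n\ge0\}$ used there turns out to be in optimal strong second trimming position as well, so the polynomial variant of Lemma \ref{trim2} applies. Then, given a bounded $\pol$, enclose it in a large $n$-simplex $\Delta$; use the orthant result to express $\R^n\setminus\Int(\Delta)$ as a polynomial image; and, writing $\pol=\Delta\cap H_1^+\cap\cdots\cap H_m^+$, iteratively apply the polynomial variant of Lemma \ref{trim2} at each step (each intermediate polyhedron $\pol_{j-1}$ being bounded, hence in strong second trimming position after a change of coordinates placing $H_j^+$ on $\{x_n\le0\}$) to pass from $\R^n\setminus\Int(\pol_{j-1})$ to $\R^n\setminus\Int(\pol_j)$.

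For part (ii), I proceed by double induction on $n$ and the number of facets $m$. The nondegenerate base $m=n$ (a single vertex) would be handled by an analog of Lemma \ref{lem2} for the interior, using the same $\pol_1$ as above. For the inductive step $m>n$, place $\pol$ in extreme weak second trimming position with respect to some facet $\Ff_1$, which is automatic by Remark \ref{bajuste}(1) once we arrange $\Ff_1\subset\{x_n=0\}$ and $\pol\subset\{x_n\le0\}$. The natural attempt is to apply the regular variant of Lemma \ref{trim2} to $\pol_{1,\times}=\bigcap_{j\ne1}H_j^+$: since $\pol_{1,\times}\cap\{x_n\le0\}=\pol$, this would yield a regular map producing $\R^n\setminus\Int(\pol)$ from $\R^n\setminus\Int(\pol_{1,\times})$ (possibly after discarding a $\Lambda\subset\{x_n=0\}$), and the latter set would be furnished by the inductive hypothesis applied to $\pol_{1,\times}$, a polyhedron with $m-1\ge n$ facets which (since $n\ge3$) cannot be a layer.

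The main obstacle, in contrast with the proof of Theorem \ref{redexta}(ii), is that the natural candidate $\pol_{1,\times}$ need not be in weak second trimming position: removing the $\Ff_1$ constraint can enlarge the recession cone of $\pol_{1,\times}$ to contain $+\vec{e}_n$, rendering the fibers unbounded upward (this already happens for a bounded rectangular ``column'' in $\R^3$, no matter which facet is chosen as $\Ff_1$). Hence the direct analog of Lemma \ref{lem:ppr} fails, and completing the inductive step will require a finer placement argument: either selecting $\Ff_1$ so that some other facet of $\pol$ has outward normal with strictly positive $x_n$-component (thereby capping $\pol_{1,\times}$ from above), or introducing an auxiliary bounding hyperplane $\{x_n=K\}$ with $K>0$ sufficiently large and absorbing the resulting additional facet through the $\Lambda$-freedom in Lemma \ref{trim2}. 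This delicate positioning is the technically hardest part of the argument and is where the proof genuinely departs from that of Theorem \ref{redexta}(ii).
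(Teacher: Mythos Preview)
Your plan to replace first trimming by second trimming throughout does not work, and the obstacle you identify for part (ii) already bites in part (i), at the very base cases you treat as routine.

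For the orthant step: you note that $\pol_1$ is in optimal strong second trimming position and propose to apply Lemma \ref{trim2}. But Lemma \ref{trim2} cuts with $\{x_n\le0\}$, and $\pol_1\cap\{x_n\le0\}=\{x_1\ge0,\dots,x_{n-3}\ge0,\,x_{n-2}+x_n\ge0,\,x_n\le0\}$ is a degenerate polyhedron (free in $x_{n-1}$), not an $n$-orthant. The cut you actually need is with $\{x_{n-1}\le0\}$; if you swap $x_{n-1}\leftrightarrow x_n$ to make this the trim2 cut, the resulting $\pol_1'$ has $\Ii_a=\R$ whenever nonempty, so it is not even in weak second trimming position. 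The same obstruction blocks the orthant-to-simplex passage: an $n$-orthant whose intersection with $\{x_n\le0\}$ is bounded must have recession cone meeting $\{x_n>0\}$, hence cannot be in second trimming position. So there is no placement making a pure trim2 step produce $\R^n\setminus\Int(\Delta)$ from $\R^n\setminus\Int(\Delta_{1,\times})$.

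The paper resolves this by using \emph{both} trimmings in each step, in sequence. Lemma \ref{trim}(ii), applied to $\pol_{1,\times}$ in first trimming position (which \emph{is} inherited from $\pol$ via Lemma \ref{lem:ppr}), performs the cut but yields $\R^n\setminus(\Int(\pol_{1,\times})\cap\{x_{n-1}\le0\})=\R^n\setminus(\Int(\pol)\cup\Lambda)$, where $\Lambda$ is a piece of the relative interior of the new facet $\Ff_1$. Then, after a coordinate swap, Lemma \ref{trim2} is applied to $\pol$ itself---not to $\pol_{1,\times}$---and $\pol$ \emph{is} in extreme weak second trimming position with respect to $\Ff_1$ by Remark \ref{bajuste}(1). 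The role of trim2 here is only to absorb the stray $\Lambda$ via its built-in $\Lambda$-freedom, not to perform the cut. This two-stage mechanism is exactly the missing idea: it sidesteps the recession-cone obstruction because trim2 is never asked to act on $\pol_{1,\times}$. Your speculative fixes (choosing $\Ff_1$ to leave another upward-bounding facet, or adding an auxiliary cap) are unnecessary once you see this.

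One thing you do have right: in the iterative portion of part (i), once $\R^n\setminus\Int(\Delta)$ is in hand, each $\pol_{j-1}$ is bounded, so your single trim2 step from $\R^n\setminus\Int(\pol_{j-1})$ to $\R^n\setminus\Int(\pol_j)$ is valid and slightly simpler than what the paper writes there.
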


Again, the proof of the previous results runs by induction on the number of facets of $\pol$ in case (i) and by double induction on the dimension and the number of facets of $\pol$ in case (ii). The first step of the induction process in the second case concerns dimension $2$, which once more has already being approached by the second author in \cite{u2} with similar but less involving techniques. We start by approaching a particular case.

\begin{lem}[The orthant]\label{conocerrado} 
Let $\pol\subset\R^n$ be an $n$-dimensional nondegenerate convex polyhedron with $n\geq2$ facets. Then  $\R^n\setminus\Int(\pol)$ is a polynomial image of $\R^n$. 
\end{lem}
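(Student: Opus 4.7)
I would prove Lemma~\ref{conocerrado} by induction on $n\geq 2$, mirroring the structure of Lemma~\ref{lem2} but replacing the type~I trimming tool of Lemma~\ref{trim} by the type~II tool of Lemma~\ref{trim2}. The base case $n=2$ is taken from \cite{u2}. After an affine change of coordinates I may assume $\pol=\ol{\Qq}_n$. The inductive hypothesis provides a polynomial map $f\colon\R^{n-1}\to\R^{n-1}$ with $f(\R^{n-1})=\R^{n-1}\setminus\Int(\ol{\Qq}_{n-1})$, and its trivial extension $\widehat{f}(x_1,\dots,x_n):=(f(x_1,\dots,x_{n-1}),x_n)$ satisfies $\widehat{f}(\R^n)=\R^n\setminus\Int(\ol{\Qq}_{n-1}\times\R)$.

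Next, I would choose a linear change of coordinates $g_1$ sending $\ol{\Qq}_{n-1}\times\R$ onto $\pol_1:=\{x_1\geq 0,\dots,x_{n-3}\geq 0,\ x_{n-2}+x_n\geq 0,\ x_{n-2}-x_n\geq 0\}$, the same auxiliary polyhedron used in Lemma~\ref{lem2}. A direct computation shows that its vertical fibers are $\Ii_a=\{a\}\times[-a_{n-2},a_{n-2}]$ (nonempty precisely when $a_{n-2}\geq 0$ and $a_i\geq 0$ for $i\leq n-3$), and in particular $0\in\Ii_a$ whenever $\Ii_a\neq\varnothing$. Hence $\mathfrak{B}_{\pol_1}=\varnothing$ and $\pol_1$ is in \emph{optimal strong} second trimming position. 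Applying Lemma~\ref{trim2} with the polynomial (strong-case) trimming map $\TS^{\rm p}_{P,M}$ transforms the image $\R^n\setminus\Int(\pol_1)$ into $\R^n\setminus\Int(\pol_1\cap\{x_n\leq 0\})$.

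The heart of the proof, and the main obstacle, is that $\pol_2:=\pol_1\cap\{x_n\leq 0\}=\{x_1\geq 0,\dots,x_{n-3}\geq 0,\ x_{n-2}+x_n\geq 0,\ x_n\leq 0\}$ is still degenerate in the direction $\vec e_{n-1}$ (the constraint $x_{n-2}-x_n\geq 0$ becoming redundant), so it is not yet affinely equivalent to $\ol{\Qq}_n$. Unlike in Lemma~\ref{lem2}, a subsequent type~I trimming does \emph{not} resolve this, since Lemma~\ref{trim}(ii) for the interior version produces $\R^n\setminus(\Int(\pol_2)\cap\{x_{n-1}\leq 0\})$ rather than $\R^n\setminus\Int(\pol_2\cap\{x_{n-1}\leq 0\})$, and the two sets differ on the relative interior of the face $\Int(\pol_2)\cap\{x_{n-1}=0\}$, which is nonempty.

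I would finish by performing a further change of coordinates $g_2$ that realigns the degenerate direction with the new ``top'' coordinate and then invoking Lemma~\ref{trim2} a second time, crucially using its $\Lambda$-version: since the conclusion of that lemma is insensitive to enlarging the forbidden set by any $\Lambda\subset\{x_n=0\}$, I would choose $\Lambda$ to equal the relative interior of the facet of $\pol_2$ on the new $\{x_n=0\}$ created by the first trimming. This $\Lambda$-absorption is precisely what prevents the iteration from becoming circular (note that $\pol_2\cong\ol{\Qq}_{n-1}\times\R$, so naive repetition would return to the starting configuration), and after this second polynomial trimming a final change of coordinates $g_3$ identifies the trimmed polyhedron with $\ol{\Qq}_n$. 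Composing all these maps yields the required polynomial map $\R^n\to\R^n$ with image $\R^n\setminus\Int(\ol{\Qq}_n)$; the hardest part is orchestrating $g_2$ and $\Lambda$ so that the second trimming can be taken \emph{polynomial} (i.e., is in strong, not merely weak, second trimming position) while still terminating at $\ol{\Qq}_n$ rather than at yet another degenerate polyhedron.
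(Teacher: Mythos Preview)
Your plan reverses the order of the two trimmings relative to the paper, and this creates a genuine gap. The paper applies the type~I trim (Lemma~\ref{trim}(ii)) to $\pol_1$ \emph{first}, obtaining
\[
\R^n\setminus(\Int(\pol_1)\cap\{x_{n-1}\le 0\})=\R^n\setminus(\Int(\pol_3)\cup\Lambda),
\]
where $\pol_3:=\pol_1\cap\{x_{n-1}\le 0\}$ is already nondegenerate with $n$ facets (hence affinely equivalent to $\ol{\Qq}_n$) and $\Lambda=\Int(\pol_1)\cap\{x_{n-1}=0\}$. After swapping $x_{n-1}$ and $x_n$, the polyhedron $\pol_3'$ is in extreme optimal strong second trimming position, and a \emph{single} application of Lemma~\ref{trim2} with this $\Lambda$ absorbs the extra face and yields $\R^n\setminus\Int(\pol_3')$. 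Two trimmings, done.

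Your route applies trim2 to $\pol_1$ first, but that step is vacuous: $\pol_2=\pol_1\cap\{x_n\le 0\}$ is again affinely equivalent to $\ol{\Qq}_{n-1}\times\R$, i.e.\ to $\pol_1$ itself, as you yourself note. Crucially, the output of that trim2 is exactly $\R^n\setminus\Int(\pol_2)$ with \emph{no} extra set removed, so there is no $\Lambda$ available for the ``$\Lambda$-absorption'' you invoke in your last paragraph. A $\Lambda$ on the relevant hyperplane only appears \emph{after} a type~I trim has been performed. If you insert a type~I trim between your two trim2's the argument can be completed (trim2, trim, swap, trim2), but the initial trim2 contributes nothing and you have effectively recovered the paper's two-step scheme with a wasted step prepended.

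Finally, the $\Lambda$ you describe---``the relative interior of the facet of $\pol_2$ on the new $\{x_n=0\}$ created by the first trimming''---is misidentified: after your swap $g_2$, the facet produced by the first trim2 lies on $\{x_{n-1}=0\}$, not on $\{x_n=0\}$. The $\Lambda$ that actually needs absorbing is the relative interior of the \emph{new} facet of $\pol_2\cap\{x_{n-1}\le 0\}$ produced by the intermediate type~I trim, transported to $\{x_n=0\}$ by the swap.
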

\begin{proof}
After a change of coordinates, it is enough to show, by induction on $n$, that $\pp(\R^n\setminus\Qq_n)=n$ for each $n\geq 2$, where 
$$
\Qq_n:=\{x_1>0,\dots,x_n>0\}\subset\R^n.
$$ 
For $n=2$ apply \cite[Thm.1]{u2} and assume, by induction hypothesis, that $\pp(\R^{n-1}\setminus\Qq_{n-1})=n-1$ is true (if $n\geq3$), that is, there is a polynomial map $f:\R^{n-1}\to\R^{n-1}$ such that $f(\R^{n-1})=\R^{n-1}\setminus\Qq_{n-1}$. Define 
$$
f^{(1)}:\R^n\to\R^n,\, (x_1,\dots,x_n)\mapsto(f(x_1,\dots,x_{n-1}),x_n),
$$ 
which satisfies the equality 
$$
f^{(1)}(\R^n)=f(\R^{n-1})\times\R=(\R^{n-1}\setminus\Qq_{n-1})\times\R.
$$ 
Let $f^{(2)}:\R^n\to\R^n$ be a change of coordinates such that $f^{(2)}((\R^{n-1}\setminus\Qq_{n-1})\times\R)=\R^n\setminus\Int(\pol_1)$, where
$$
\pol_1:=\{x_1\ge 0,\dots,x_{n-3}\ge0,\ x_{n-2}+x_n\ge 0,\ x_{n-2}-x_n\ge 0\}\subset\R^n
$$
is a convex polyhedron in strong first trimming position (see the proof of Lemma \ref{lem2}). By Lemma \ref{trim} there exists a polynomial map $f^{(3)}:\R^n\to\R^n$ such that 
$$
f^{(3)}(\R^n\setminus\Int(\pol_1))=\R^n\setminus(\Int(\pol_1)\cap\{x_{n-1}\le 0\}).
$$
Consider now the change of coordinates
$$
f^{(4)}:\R^n\to\R^n,\ (x_1,\ldots,x_n)\mapsto(x_1,\ldots,x_{n-2},x_n,x_{n-1});
$$
notice that $\pol_1'=f^{(4)}(\pol_1)$ is in extreme optimal strong second trimming position with respect to the facet $\Ff':=f^{(4)}(\pol_1\cap{x_{n-1}=0})=\pol_1'\cap\{x_n=0\}$. By Lemma \ref{trim2} there exists a polynomial map $f^{(5)}:\R^n\to\R^n$ such that
$$
f^{(5)}(\R^n\setminus(\Int(\pol_1')\cap\{x_n\leq0\}))=\R^n\setminus(\Int(\pol_1')\cap\{x_n<0\}).
$$
Moreover, there is a change of coordinates $f^{(6)}:\R^n\to\R^n$ such that
$$
f^{(6)}(\R^n\setminus(\Int(\pol_1')\cap\{x_n<0\}))=\R^n\setminus\Qq_n.
$$
Thus, the composition $g:=f^{(6)}\circ f^{(5)}\circ f^{(4)}\circ f^{(3)}\circ f^{(2)}\circ f^{(1)}:\R^n\to\R^n$ is a polynomial map which satisfies $g(\R^n)=\R^n\setminus\Qq_n$.
\end{proof}

\begin{proof}[Proof of Theorem \em\ref{redextai}(i)] 
Suppose first that $\pol:=\Delta$ is an $n$-simplex. We may assume that its vertices are
$$
v_0:=(0,\dots,0),\ v_i:=(0,\dots,0,1^{(i)},0,\dots,0)\ \text{for}\ i\neq n-1,\ \text{and}\ v_{n-1}:=(0,\dots,0,-1,0);
$$
this convex polyhedron is in strong first trimming position because it is bounded. We write $\Delta:=\bigcap_{i=1}^{n+1}{H_{i}'}^+$ where $\{H_1',\dots,H_{n+1}'\}$ is the minimal presentation of $\Delta$ and the hyperplanes $H_i'$ are ordered in such a way that $H_1':=\{x_n=0\}$. Consequently, the convex polyhedron $\Delta_{1,\times}:=\bigcap_{i=2}^{n+1}{H_i'}^+$ is convex, $n$-dimensional, nondegenerate and has $n$ facets. By Lemma \ref{conocerrado} there is a polynomial map $g:\R^n\to\R^n$ such that $g(\R^n)=\R^n\setminus\Int\Delta_{1,\times}$. By Lemmata \ref{trim} and \ref{trim2}, and using suitably the change of coordinates which interchanges the variables $\x_{n-1}$ and $\x_n$ (as we have done above in the proof of Lemma \ref{conocerrado}), there is a polynomial map $h:\R^n\to\R^n$ such that $h(\R^n\setminus\Int\Delta_{1,\times})=\R^n\setminus\Int\Delta$, and therefore the composition $f:=h\circ g$ is a polynomial map such that $f(\R^n)=\R^n\setminus\Int\Delta$.

\vspace{1mm}
Now, let $\pol$ be an $n$-dimensional bounded convex polyhedron of $\R^n$. Let $\Delta$ be an $n$-simplex such that $\pol\subset\Int\Delta$. Write $\pol:=\bigcap_{i=1}^mH_i^+$, where $\{H_1,\dots,H_m\}$ is the minimal presentation of $\pol$, and consider the convex polyhedra $\{\pol_{0},\dots,\pol_{m}\}$ defined by
$$
\pol_0:=\Delta,\quad \pol_j:=\pol_{j-1}\cap H_{j}^+\ \text{for}\ 1\le j\le m.
$$
Thus, $\pol_{m}=\pol$ and for each $1\leq j\leq m$ the convex polyhedron $\pol_{j}=\Delta\cap(\bigcap_{i=1}^jH_i^+)$ has a facet $\Ff_{j}$ contained in the hyperplane $H_j$. Moreover, the set $\R^n\setminus\Delta$ is, as we have just seen above, the image of a polynomial map $f^{(0)}:\R^n\to\R^n$. 

For each index $1\leq j\leq m$ there is a change of coordinates $h^{(j)}:\R^n\to\R^n$ such that $h^{(j)}(H_j^+)=\{x_{n-1}\leq0\}$; denote
$$
\pol_j':=h^{(j)}(\pol_j)=h^{(j)}(\pol_{j-1})\cap h^{(j)}(H_{j}^+)=h^{(j)}(\pol_{j-1})\cap\{x_{n-1}\leq0\}.
$$
By Lemmata \ref{trim} and \ref{trim2}, and using again suitably the change of coordinates that interchange the variables $\x_{n-1}$ and $\x_n$, there is a polynomial map $g^{(j)}:\R^n\to\R^n$ such that
$$
g^{(j)}(\R^n\setminus h^{(j)}(\Int(\pol_{j-1})))=\R^n\setminus h^{(j)}(\Int(\pol_{j-1}\cap \{x_{n-1}\le 0\}))=\R^n\setminus\Int(\pol'_{j});
$$
hence, the polynomial map $f^{(j)}:=(h^{(j)})^{-1}\circ g^{(j)}\circ h^{(j)}:\R^n\to\R^n$ satisfies the equality
$$
f^{(j)}(\R^n\setminus \Int(\pol_{j-1}))=\R^n\setminus\Int(\pol_j).
$$
Thus, the composition $f:=f^{(m)}\circ\cdots\circ f^{(0)}:\R^n\to\R^n$ is a polynomial map which satisfies $f(\R^n)=\R^n\setminus\Int(\pol_m)=\R^n\setminus\Int(\pol)$, as wanted.
\end{proof}

\begin{proof}[Proof of Theorem \em\ref{redextai}(ii)]
We proceed by induction on the pair $(n,m)$ where $n$ is the dimension of $\pol$ and $m$ is the number of facets of $\pol$. As we have already commented the case $n=2$ has already solved in \cite[Thm.1]{u2} (using polynomial maps) and we do not include further details. Suppose in what follows that $n\geq3$ and that the result holds for any $d$-dimensional convex polyhedron of $\R^d$ which is not a layer, if $2\leq d\leq n-1$. We distinguish two cases:

\noindent{\bf Case 1.} \em $\pol$ is a degenerate convex polyhedron\em. This case runs parallel to Case 1 in the proof of Theorem~\ref{redexta}(ii). By Lemma~\ref{lem:deg2}, we may assume that $\pol=\p\times\R^{n-k}$ where $1\le k<n$ and $\p\subset\R^k$ is a nondegenerate convex polyhedron. If $k=1$ then $\pol$ is either a layer, which disconnects $\R^n$ and cannot be a regular image of $\R^n$, or a half-space. It is trivial to check that the complement of the interior of a half-space is indeed a regular (in fact, polynomial) image of $\R^n$. We now assume $k>1$; by the induction hypothesis $\R^k\setminus\Int\p$ is the image of $\R^k$ by a regular map $f:\R^k\to\R^k$; hence, $\R^n\setminus\Int(\pol)$ is the image of the regular map $(f,\id_{\R^{n-k}}):\R^k\times\R^{n-k}\to\R^k\times\R^{n-k}$.

\vspace{1mm}
\noindent{\bf Case 2.} \em $\pol$ is a nondegenerate convex polyhedron\em. If $\pol$ has $n$ facets (and so just one vertex), the result follows from Lemma \ref{conocerrado}. Suppose now that the number $m$ of facets of $\pol$ is strictly greater than $n$ and let $\{H_1,\ldots,H_m\}$ be the minimal presentation of $\pol:=\bigcap_{i=1}^mH_i^+$. We may assume, by Lemma \ref{colocarcasipmn}, that $\pol$ is in extreme weak first trimming position with respect to $\Ff_1$. 

By Lemma \ref{lem:ppr} the convex polyhedron $\pol_{1,\times}:=\bigcap_{j=2}^mH_j^+$ is in weak first trimming position. Thus, we can apply Proposition \ref{trim} to the convex polyhedron $\pol_{1,\times}$, and so there is a regular map $f^{(1)}:\R^n\to\R^n$ such that 
$$
f^{(1)}(\R^n\setminus\Int(\pol_{1,\times}))=\R^n\setminus(\Int(\pol_{1,\times})\cap\{x_{n-1}\le 0\}).
$$
Observe that 
\begin{equation*}
\begin{split}
\Int(\pol_{1,\times})\cap\{x_{n-1}\leq0\}&=(\Int(\pol_{1,\times})\cap\{x_{n-1}<0\})\cup(\Int(\pol_{1,\times})\cap\{x_{n-1}=0\})\\
&=\Int(\pol_{1,\times}\cap\{x_{n-1}\leq0\})\cup \Lambda=\Int(\pol)\cup\Lambda,
\end{split}
\end{equation*}
where 
$$
\Lambda:=\Int(\pol_{1,\times})\cap\{x_{n-1}=0\}\subset\pol_{1,\times}\cap\{x_{n-1}=0\}=\pol\cap\{x_{n-1}=0\}=\Ff_1
$$ 
is a subset of the interior of the facet $\Ff_1$ (as a topological manifold with boundary). Let $f^{(2)}:\R^n\to\R^n$ be a change of coordinates such that $\pol':=f^{(2)}(\pol)\subset\{x_n\leq0\}$ and $\Ff_1':=f^{(2)}(\Ff_1)=\pol\cap\{x_n=0\}$; by Remark \ref{bajuste}, $\pol'$ is in extreme weak second trimming position with respect to the facet $\Ff_1$. Denote $\Lambda':=f^{(2)}(\Lambda)$.

By Lemma \ref{trim2}, there is a regular map $f^{(3)}:\R^n\to\R^n$ such that 
$$
f^{(3)}(\R^n\setminus(\Int(\pol')\cup\Lambda'))=\R^n\setminus\Int(\pol').
$$ 
Moreover, $\pol_{1,\times}$ is a convex polyhedron with $m-1\geq n$ facets; hence, it is not a layer, and by induction hypothesis there is a regular map $g:\R^n\to\R^n$ such that $g(\R^n)=\R^n\setminus\Int(\pol_{1,\times})$. We conclude that the regular map $h:=(f^{(2)})^{-1}\circ f^{(3)}\circ f^{(2)}\circ f^{(1)}\circ g$ satisfies 
\begin{equation*}
\begin{split}
h(\R^n)&=(f^{(2)})^{-1}\circ f^{(3)}\circ f^{(2)}\circ f^{(1)}(\R^n\setminus\pol_{1,\times})=
(f^{(2)})^{-1}\circ f^{(3)}\circ f^{(2)})(\R^n\setminus(\Int(\pol)\cup\Lambda))\\
&=(f^{(2)})^{-1}\circ f^{(3)})(\R^n\setminus(\Int(\pol')\cup\Lambda'))=(f^{(2)})^{-1}(\R^n\setminus\Int(\pol'))
=\R^n\setminus\Int(\pol),
\end{split}
\end{equation*}
and we are done. 
\end{proof}

\begin{remark}
Again, the previous proof is constructive. Moreover, if the convex polyhedron that appears in a certain step of the inductive process is in strong first trimming position with respect to a facet $\Ff_1$ and it can also be placed in strong second trimming position with respect to that facet, then combining Lemmata \ref{trim} and \ref{trim2} the regular map of this step can be chosen polynomial. In view of Definitions \ref{1tm} and \ref{2tm}, this reduces the complexity of the final regular map. Moreover, if this can be done in each inductive step, the final regular map can be assumed to be polynomial.
\end{remark}

\section{Application: Exterior of the $n$-dimensional ball}\label{s8}

The complement of a closed ball can be understood as the limit of the complements of a suitable family of bounded convex polyhedra (consider triangulations of the closed ball) and it is natural to wonder if this semialgebraic set is also a polynomial image of $\R^n$. However, this complement is never a polynomial image of $\R^n$. Indeed, consider the $n$-dimensional closed ball $\ol{\Bb}_n({\bf 0},1)$ of radius $1$ centered at the origin of $\R^n$ and its complement $\Ss:=\R^n\setminus\ol{\Bb}_n({\bf 0},1)$. Consider the $(n-1)$-dimensional algebraic set 
$$
X:=\partial\Ss=\ol{\Ss}\setminus\Ss=\{(x_1,\ldots,x_n)\in\R^n:\ x_1^2+\cdots+x_n^2=1\}.
$$
Since $X\cap\ol{\Ss}=X$ is a bounded set and $\dim(X\cap\partial \Ss)=n-1$, we deduce by \cite[3.4]{fg2}, that $\Ss$ is not polynomial image of $\R^n$.

Nevertheless, we prove here that the complement of the open ball of $\R^n$, which is the limit of the complements of the interiors of a suitable family of bounded convex polyhedra, is a polynomial image of $\R^n$. The $2$-dimensional case was approached in \cite[4.1]{fg2} with a very specific proof which cannot be generalized to the $n$-dimensional case. Our proof here is based on the results obtained in the precedent sections concerning complements of convex polyhedra. As before, the proofs in this section assume that $n\ge 2$. 

\begin{prop}\label{openball}
The complement of an open ball of $\R^n$ is a polynomial image of $\R^n$.
\end{prop}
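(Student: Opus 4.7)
\noindent\emph{Plan.} My plan is to reduce the proposition to Theorem~\ref{redextai}(i) by composing a polynomial surjection onto the complement of the interior of a bounded convex polyhedron with a polynomial ``reshaping'' map of $\R^n$.

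First, I would choose a bounded $n$-dimensional convex polyhedron $\pol\subset\R^n$ such that $\ol{\Bb}_n({\bf 0},1)\subset\Int\pol$; a natural choice is the hypercube $\pol:=[-2,2]^n$. By Theorem~\ref{redextai}(i) there is a polynomial map $g:\R^n\to\R^n$ with $g(\R^n)=\R^n\setminus\Int\pol$, and since $\Bb_n({\bf 0},1)\subsetneq\Int\pol$ this set is strictly contained in $\R^n\setminus\Bb_n({\bf 0},1)$.

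Second, I would construct a polynomial map $\varphi:=(\varphi_1,\ldots,\varphi_n):\R^n\to\R^n$ which is surjective and satisfies $\varphi^{-1}(\Bb_n({\bf 0},1))=\Int\pol$, equivalently, $\{x\in\R^n:\sum_i\varphi_i(x)^2<1\}=\Int\pol$. These two properties together imply the set-theoretic identity $\varphi(\R^n\setminus\Int\pol)=\R^n\setminus\Bb_n({\bf 0},1)$: the inclusion $\subset$ is the contrapositive of $\varphi^{-1}(\Bb_n)\subset\Int\pol$, while $\supset$ follows from the surjectivity of $\varphi$ combined with $\varphi^{-1}(\Bb_n)\supset\Int\pol$. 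Consequently the composition $\varphi\circ g:\R^n\to\R^n$ is a polynomial map with image $\R^n\setminus\Bb_n({\bf 0},1)$, as required.

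The main obstacle is the construction of $\varphi$: producing polynomials $\varphi_1,\ldots,\varphi_n$ so that $\{\sum\varphi_i^2<1\}$ coincides with the interior of a bounded convex polyhedron is rather subtle, because $\partial\pol$ is piecewise linear while the level set $\{\sum\varphi_i^2=1\}$ is a single algebraic hypersurface of $\R^n$, so a straightforward radial ansatz such as $\varphi(x):=x/R$ produces a ball instead of a polyhedron. I expect the paper to overcome this either (a) by an \emph{ad hoc} polynomial identity tied to a carefully chosen $\pol$ (e.g.\ a simplex or a rescaled hypercube), with $\varphi$ built from products involving the linear equations defining the facets of $\pol$ and auxiliary sum-of-squares corrections, or (b) by replacing the single map $\varphi$ with a finite composition of trimming-type maps as in Definitions~\ref{1tm} and~\ref{2tm}, alternated with affine changes of coordinates, each step bending one facet of $\partial\pol$ toward the corresponding tangent arc of $\partial\Bb_n({\bf 0},1)$ until the piecewise linear boundary has been deformed into the sphere.
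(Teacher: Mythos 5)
Your reduction step is sound: if $\varphi$ is surjective and $\varphi^{-1}(\Bb_n({\bf 0},1))=\Int(\pol)$, then indeed $\varphi(\R^n\setminus\Int(\pol))=\R^n\setminus\Bb_n({\bf 0},1)$, and composing with the map given by Theorem~\ref{redextai}(i) would finish the proof. But the heart of the argument --- the construction of $\varphi$ --- is missing, and you yourself flag it as the main obstacle without resolving it; neither of your two guesses (an ad hoc identity, or a chain of trimming maps ``bending'' facets onto spherical caps) is carried out, and the trimming maps of Definitions~\ref{1tm} and~\ref{2tm} only push points along vertical lines past hyperplanes, so they are not suited to deform a piecewise linear boundary into a sphere. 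Worse, the sufficient condition you impose is much stronger than what is needed and is genuinely problematic: writing $P:=\sum_i\varphi_i^2$, continuity forces $P\equiv 1$ on all of $\partial\pol$, hence $P-1$ must be divisible by every one of the $2n$ facet equations of the cube, and you must in addition keep $P<1$ exactly on $\Int(\pol)$ and $P\geq 1$ outside while $P$ is a sum of $n$ squares coming from a surjective map. Nothing in your proposal produces such a $\varphi$, so there is a genuine gap.

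The paper avoids this entirely by not asking that the preimage of the ball be the polyhedron. It starts from $\R^n\setminus\Cc$ with $\Cc:=[-1,1]^n$ (so $\Bb_n({\bf 0},1)\subset\Cc\subset\Bb_n({\bf 0},n)$) and applies the radial polynomial map $f^{(2)}(x):=g(\|x\|)\,x$, where $g\in\R[\t^2]$ is an explicit even quartic chosen so that $h(\t):=\t g(\t)$ satisfies $h(1)=h(n)=1$, $h'(n)=0$, $h(]1,n[)\subset[1,+\infty[$ and $h$ is increasing on $[n,+\infty[$; this yields $h([\mu,+\infty[)=[1,+\infty[$ for every $\mu\in[1,n]$. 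Since each ray from the origin leaves $\Cc$ at a distance $\mu_{\vec v}\in[1,n]$, the image of the part of the ray outside $\Cc$ is exactly the part of the ray outside the closed unit ball, and summing over all directions gives $f^{(2)}(\R^n\setminus\Cc)=\R^n\setminus\Bb_n({\bf 0},1)$. Only the image of the complement is controlled --- points inside the cube may land anywhere --- which is precisely the weakening that makes a one-variable interpolation argument suffice. If you want to salvage your plan, replace the requirement $\varphi^{-1}(\Bb_n)=\Int(\pol)$ by the weaker ``$\varphi$ maps the exterior of $\pol$ onto the exterior of the ball'' and look for $\varphi$ of this radial form.
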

\begin{proof} 
It is enough to prove that the complement of the open ball $\Bb_n({\bf 0},1)\subset\R^n$ of radius $1$ centered at the origin is a polynomial image of $\R^n$. To that end, consider the $n$-dimensional cube $\Cc:=[-1,1]^n$ and recall that, by Theorem \ref{mainext}, there is a polynomial map $f^{(1)}:\R^n\to\R^n$ whose image is $\R^n\setminus\Cc$. Observe also that $\Bb_n({\bf 0},1)\subset\Cc\subset\Bb_n({\bf 0},n)$.

\vspace{2mm}\setcounter{substep}{0}
\begin{substeps}{openball}
Next, we construct a biquadratic polynomial $g:=g_n\in\R[{\t}^2]$ such that the polynomial $h:=h_n:=\t g$ defines an increasing function on the interval $[n,+\infty[$ and satisfies the following conditions: 
$$
h(0)=0,\ h(1)=h(n)=1,\ h'(n)=0,\ h(]1,n[)\subset[1,+\infty[\quad \text{and}\quad h([0,+\infty[)=[0,+\infty[.
$$ 
This implies, in particular, that $h(]1,+\infty[)=h([n,+\infty[)=[1,+\infty[$ and the following equality needed later:
\begin{equation}\label{interpol}
h([\mu,\infty[)=[1,+\infty[\quad\text{for each}\quad 1\leq\mu\leq n.
\end{equation}

Next, let us check that the polynomial $g(\t):=a_n\t^4+b_n\t^2+c_n$, where
$$
a_n:=\dfrac{1+2n}{2n^3(1+n)^2},\quad
b_n:=-\dfrac{1+2n+3n^2+4n^3}{2n^3(1+n)^2}\quad\text{and}\quad
c_n:=\dfrac{3+6n+4n^2+2n^3}{2n(1+n)^2},
$$
satisfies the required conditions. The values $a_n$, $b_n$ and $c_n$ are obtained by imposing conditions
$h(1)=h(n)=1$ and $h'(n)=0$ to a polynomial of type $a_n\t^5+b_n\t^3+c_n\t$.
\end{substeps}

\vspace{1mm}
Indeed, $h(0)=0$ and the derivative $h'(\t)=5a_n\t^4+3b_n\t^2+c_n$ of $h$ in $\t=1$ becomes
$$
h'(1)=\frac{n^4+n^3-4n^2+n+1}{(1+n)n^3}=\frac{(n-1)^2(n^2+3n+1)}{(n+1)n^3}>0;
$$
hence, $h$ is increasing on a neighborhood of $\t=1$. Observe that $a_n,c_n>0$ and $b_n<0$, and so by Descartes' chain rule (see \cite[1.2.14]{bcr}), the number of positive roots of $h'(\t)$ counted with their multiplicity is at most $2$. By Rolle's Theorem we deduce using equalities $h(1)=h(n)=1$ that there is $\xi\in{]1,n[}$ with $h'(\xi)=0$. Since $h'(n)=0$, the derivative $h'(\t)$ has exactly two positive roots, which are $\xi$ and $n$. This implies, since $a_n>0$, that the derivative $h'(\t)$ is strictly positive on the interval $]n,+\infty[$; consequently, $h$ is an increasing function on the interval $[n,+\infty[$. 

Let us show now that $h(]1,n[)\subset[1,+\infty[$. Since $h$ is an increasing function on a neighborhood of $\t=1$ and since $h(1)=h(n)=1$ and $h'(\t)$ has exactly one root in the interval $]1,n[$, we deduce that $h(\t)>1$ for all $\t\in{]1,n[}$. Otherwise there would exist $\t_0\in{]1,n[}$ such that $h(\t_0)=1$ and, by Rolle's Theorem, there would be $\xi_1\in{]1,\t_0[}$ and $\xi_2\in{]\t_0,n[}$ such that $h'(\xi_i)=0$ for $i=1,2$, a contradiction.

Finally, we prove the equality $h([0,+\infty[)=[0,+\infty[.$ Indeed, $h$ is an increasing function on the interval $]0,\xi[$, because $h'(\t)$ has no roots in $]0,\xi[$ and $h'(1)>0$. Since $h(\xi)>1=h(n)$ and $h'(\t)$ does not vanish in the interval $]\xi,n[$, the function $h$ is decreasing in such interval; hence, $h([0,n])=[0,h(\xi)]$. On the other hand, $h(\t)$ is increasing in the interval $[n,+\infty[$ and so $h([n,+\infty[)=[h(n),+\infty[$. Consequently, since $h(n)\leq h(\xi)$ it holds
$$
h([0,+\infty[)=h([0,n])\cup h([n,+\infty[)=[0,h(\xi)]\cup[h(n),+\infty[\ =[0,+\infty[.
$$
\vspace{2mm}
\begin{substeps}{openball}
Next, since $g\in\R[{\t}^2]$, the map 
$$
f^{(2)}:\R^n\to\R^n,\ x:=(x_1,\ldots,x_n)\mapsto g(\|x\|)(x_1,\ldots,x_n)
$$
is polynomial. Moreover, if as usual $\sph^{n-1}:=\{\vec{v}\in\R^n:\,\|\vec{v}\|=1\}$, for each $\vec{v}\in\sph^{n-1}$ we have
$$
f^{(2)}(t\vec{v})=g(\|t\vec{v}\|)t\vec{v}=g(t)t\vec{v}=h(t)\vec{v}\ \forall\,t\in[0,+\infty[\ \Longrightarrow\ \|f^{(2)}(t\vec{v})\|=h(t)\ \forall\,t\in[0,+\infty[.
$$
Let $\mu_{\vec{v}}\in{]1,n[}$ be such that
$$
{\bf0}\vec{v}\cap(\R^n\setminus\Cc)=\{{\bf0}+t\vec{v}:\, t\in[\mu_{\vec{v}},+\infty[\};
$$
we deduce from equality \eqref{interpol} that $f^{(2)}({\bf0}\vec{v}\cap(\R^n\setminus\Cc))=\{{\bf0}+t\vec{v}:\,t\in[1,+\infty[\}$. Consequently, 
$$
f^{(2)}(\R^n\setminus\Cc)=\bigcup_{\vec{v}\in\sph^{n-1}}f^{(2)}({\bf0}\vec{v}\cap(\R^n\setminus\Cc))=\bigcup_{\vec{v}\in\sph^{n-1}}\{{\bf0}+t\vec{v}:\, t\in[1,+\infty[\}=\R^n\setminus\Bb_n({\bf 0},1).
$$
\end{substeps}
Finally, the polynomial map $f:=f^{(2)}\circ f^{(1)}:\R^n\to\R^n$ satisfies 
$$
f(\R^n)=f^{(2)}(f^{(1)}(\R^n))=f^{(2)}(\R^n\setminus\Cc)=\R^n\setminus\Bb_n({\bf 0},1),
$$ 
as wanted.
\end{proof}

The complement of a closed ball $\ol{\Bb}$ in $\R^n$ is not a polynomial image of $\R^n$, but we prove next that it is however a polynomial image of $\R^{n+1}$; hence, ${\rm p}(\R^n\setminus\ol{\Bb})=n+1$. 

\begin{cor}\label{closed ball} 
The complement of a closed ball $\ol{\Bb}$ of $\R^n$ is a polynomial image of $\R^{n+1}$.
\end{cor}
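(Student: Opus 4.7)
The plan is to reduce (by an affine change of coordinates) to the case of the closed unit ball $\ol{\Bb}_n({\bf 0},1)\subset\R^n$, and then to combine the polynomial map $f:\R^n\to\R^n$ with $f(\R^n)=\R^n\setminus\Bb_n({\bf 0},1)$ granted by Proposition~\ref{openball} with a one-parameter family of scalings that strips the unit sphere out of the image, at the cost of one extra variable.

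The auxiliary tool I would use is the polynomial $h(\s,\t):=1+\s^2+(1-\s\t)^2\in\R[\s,\t]$. Its image is precisely the open interval $(1,+\infty)$: the inequality $h>1$ on $\R^2$ holds because $\s^2+(1-\s\t)^2=0$ would force $\s=0$ and hence $(1-\s\t)^2=1\neq 0$, a contradiction; and for each $c>1$, taking $s:=\sqrt{(c-1)/2}$ and $t\in\R$ with $(1-st)^2=(c-1)/2$ yields $h(s,t)=c$. The key feature exploited below is that for each fixed real $s>0$, as $t$ ranges over $\R$, $h(s,t)$ traces out $[1+s^2,+\infty)$, whose lower endpoint tends to $1$ as $s\to 0^+$.

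With $f$ and $h$ at hand I would set
$$
F:\R^{n+1}\to\R^n,\quad (x,t)\mapsto f(x)\cdot h(\|f(x)\|^2-1,\,t),
$$
which is manifestly polynomial. The inclusion $F(\R^{n+1})\subseteq\R^n\setminus\ol{\Bb}_n({\bf 0},1)$ is immediate from $\|f(x)\|\geq 1$ and $h(\cdot,\cdot)>1$, which force $\|F(x,t)\|>1$. For the reverse inclusion, given $z\in\R^n$ with $\mu:=\|z\|>1$, I would pick $\alpha\in(1,\mu)$ close enough to $\mu$ that $\alpha>1+(\mu^2/\alpha^2-1)^2$; this is possible by continuity, since the inequality is strict (giving $\mu-1>0$) at $\alpha=\mu$. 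Setting $\rho:=\mu/\alpha>1$ and $s:=\rho^2-1>0$, the inequality reads $\alpha\geq 1+s^2$, so there exists $t\in\R$ with $h(s,t)=\alpha$. Since $\rho z/\mu\in\R^n\setminus\Bb_n({\bf 0},1)$, one finds $x\in\R^n$ with $f(x)=\rho z/\mu$, and then $F(x,t)=(\rho z/\mu)\cdot\alpha=z$.

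The delicate step is the last one: the choice of $\alpha$ has to work uniformly for every $\mu>1$, including values arbitrarily close to $1$. This is precisely the reason for insisting that the image of $h$ be open at $1$ rather than of the form $[c,+\infty)$ with some $c>1$: when $\mu$ is just barely larger than $1$ one is forced to take $\rho\approx 1^+$ and $s\approx 0^+$, and only in this regime can $h(s,\cdot)$ reach values as close to $1$ as $\mu$. Once this continuity/range matching is checked, the construction of $F$ gives directly $F(\R^{n+1})=\R^n\setminus\ol{\Bb}_n({\bf 0},1)$, proving the corollary.
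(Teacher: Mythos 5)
Your construction is correct: $h(\s,\t)=1+\s^2+(1-\s\t)^2$ does have image exactly $]1,+\infty[$, the map $F(x,t)=f(x)\,h(\|f(x)\|^2-1,t)$ is polynomial, the inclusion $F(\R^{n+1})\subset\R^n\setminus\ol{\Bb}_n({\bf 0},1)$ is clear, and your choice of $\alpha$ close to $\mu$ (so that $\rho=\mu/\alpha>1$ is close to $1$ and $s=\rho^2-1$ is small, making $\alpha\geq 1+s^2$ attainable) correctly settles surjectivity. The underlying strategy is the same as the paper's: both write every point of norm $\mu>1$ as a factor $\lambda>1$ times a point of norm $\geq1$ supplied by the map of Proposition~\ref{openball}, spending exactly one extra variable on the factor $\lambda$. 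The implementations differ in how that factor is produced. The paper pre-composes with the map $g^{(2)}(x,y)=((xy-1)^2+x^2,\,y(xy-1))$ of \cite[1.4(iv)]{fg1}, whose image is the open half-plane $]0,+\infty[\times\R$, so the scaling factor $1+x_1$ ranges over all of $]1,+\infty[$ independently of the other coordinates and surjectivity is immediate from the factorization $\lambda=\mu/\rho$ with any $\rho\in\,]1,\mu[$. You instead build an explicit two-variable polynomial with image $]1,+\infty[$ and save the second variable by substituting $\s:=\|f(x)\|^2-1$; the price is that for fixed $x$ the attainable factors form only $[1+(\|f(x)\|^2-1)^2,+\infty[$, which is why the continuity/range-matching step is genuinely needed — and you carry it out correctly. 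Your route is self-contained (no appeal to the half-plane parametrization from \cite{fg1}), at the cost of this extra verification; the paper's is a slightly cleaner composition of previously established pieces.
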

\begin{proof}
It is enough to prove that $\R^n\setminus\ol{\Bb}_n({\bf 0},1)$ is a polynomial image of $\R^{n+1}$. By Proposition \ref{openball}, there is a polynomial map $g^{(1)}:\R^n\to\R^n$ whose image is $\R^n\setminus\Bb_n({\bf 0},1)$. On the other hand, by \cite[1.4 (iv)]{fg1}, the image of the polynomial map
$$
g^{(2)}:\R^2\to\R^2,\ (x,y)\mapsto((xy-1)^2+x^2,\ y(xy-1))
$$
is the open half-space $]0,+\infty[\times\R$. Consider now the polynomial maps
$$
\begin{array}{rl}
f^{(1)}:\R^{n+1}\to\R^n,&\ (x_1,\ldots,x_{n+1})\mapsto (1+x_1)g^{(1)}(x_2,\ldots,x_{n+1}),\\[4pt]
f^{(2)}:\R^{n+1}\to\R^{n+1},&\ (x_1,\ldots,x_{n+1})\mapsto (g^{(2)}(x_1,x_2),x_3,\ldots,x_{n+1}).
\end{array}
$$
Since $f^{(2)}(\R^{n+1})={]0,+\infty[}\times\R^n$, the polynomial map $f:=f^{(1)}\circ f^{(2)}:\R^{n+1}\to\R^n$ satisfies
$$
f(\R^{n+1})=f^{(1)}(f^{(2)}(\R^{n+1}))=f^{(1)}(]0,+\infty[\times\R^n)=\R^n\setminus\ol{\Bb}_n({\bf 0},1),
$$ 
as wanted.
\end{proof}

\bibliographystyle{amsalpha}

\end{document}